\definecolor{bulgarianrose}{rgb}{0.28, 0.02, 0.03}
\newtheorem{theorem}{Theorem}[section]
\newtheorem{corollary}[theorem]{Corollary}
\newtheorem{lemma}[theorem]{Lemma}
\newtheorem{proposition}[theorem]{Proposition}
\theoremstyle{definition}
\newtheorem*{definition}{Definition}
\newtheorem{construction}[theorem]{Construction}
\newtheorem*{question}{Question}
\newtheorem*{remark}{Remark}
\newtheorem*{acknowledgement}{Acknowledgement}
\newcommand{\M}[2][]{\ensuremath{{\textnormal{M}}_{#1}(#2)}}
\newcommand{\K}[2][]{\ensuremath{{\textnormal{K}}_{#1}(#2)}}
\newcommand{\N}[1][]{\ensuremath{{\textnormal{N}}_{#1}}}
\newcommand{\di}[2][]{\ensuremath{{\textnormal{d}}_{#1}(#2)}}
\newcommand{\ed}[2][]{\ensuremath{{\textnormal{e}}_{#1}(#2)}}
\def\imod#1{\allowbreak\mkern10mu({\operator@font mod}\,\,#1)}
\def\@textbottom{\vskip\z@\@plus 18pt}
\let\@texttop\relax
\title[Universality and Extremality]{\textnormal{On the Universality and Extremality of graphs\\ with a distance constrained colouring}}
\author{Kaushik Majumder}
\author{Ushnish Sarkar}
\address{\newline R C Bose Centre for Cryptology and Security\\ \newline Indian Statistical Institute\\ \newline $202$ Barrackpore Trunk Road\\\newline Kolkata - $700108$, India.\newline \textnormal{\textestimated-Mail}: {\tt kaushikbnmajumder\MVAt gmail.com}}
\address{\newline Department of Science and Humanities\\ \newline Sidhu Kanhu Birsa Polytechnic, Keshiary\\ \newline Paschim Medinipur, West Bengal - $721133$, India \newline \textnormal{\textestimated-Mail}: {\tt usn.prl\MVAt gmail.com}}
\subjclass[2010]{Primary: 05C15, 05C35, 05C75, 05C78. Secondary: 05D99.}
\keywords{Frequency assignment, Vertex labelling at distance two, $L(2,1)-$Colouring, Lambda number.}
\begin{document}
\begin{abstract}
A lambda colouring (or $L(2,1)-$colouring) of a graph is an assignment of non-negative integers (with minimum assignment $0$) to its vertices such that the adjacent vertices must receive integers at least two apart and 
vertices at distance two must receive distinct integers. The lambda chromatic number (or the $\lambda$ number) of a graph $G$ is the least positive integer among all the maximum assigned positive integer over all possible lambda colouring of the graph $G$. Here we have primarily shown that every graph with lambda chromatic number $t$ can be embedded in a graph, with lambda chromatic number $t$, which admits a partition of the vertex set into colour 
classes of equal size. It is further proved that if an $n-$vertex graph with lambda chromatic number $t\geq5$, 
where $n\geq t+1$, contains maximum number of edges, then the vertex set of such graph admits an equitable 
partition. For such an admitted equitable partition there are either $0$ or $\min\{|A|,|B|\}$ number of edges 
between each pair $(A,B)$ of subsets (i.e. roughly, such partition is a ``sparse like'' equitable partition). 
Here we establish a classification result, identifying all possible $n-$vertex graphs with lambda chromatic 
number $t\geq3$, where $n\geq t+1$, which contain maximum number of edges. Such classification provides a 
solution of a problem posed more than two decades ago by John P. Georges and David W. Mauro.
\end{abstract}

\maketitle

\section{Introduction}

The \emph{channel assignment} problem is the task of assigning frequencies to radio transmitters  
of a communication network. In this problem, there is a trade off between deploying minimum number of 
frequencies (or channels) and avoiding interference due to proximity of transmitters. This distance restriction 
necessitates a separation of frequencies among nearby transmitters in order to mitigate the interference. 
The usable spectrum of frequencies is a scarce and a costly resource. 
For this reason, an efficient assignment of frequencies is desirable.
The frequency assignment to the transmitters, constrained by distance related parameters, can be mapped to  
varieties of \emph{distance constrained colouring} problems of a graph. Hale \cite{Hale} modelled 
these problems as several generalised versions of graph colouring problem. In one of such models, 
the transmitters are considered as vertices and edges correspond to the unordered pairs of interfering transmitters. 
The assignment of frequencies (represented by non-negative integers) is done in such a manner that ``close'' transmitters (i.e. vertices at distance two) are assigned different frequencies and ``very close'' 
transmitters (i.e. adjacent vertices) are assigned frequencies in a difference of at least two. This channel assignment problem is translated to a colouring (or labelling) problem of graphs. Griggs and Yeh 
\cite{MR1186826} had referred this colouring problem as $L(2,1)-$colouring problem of graphs. We refer it as 
lambda colouring problem of graphs. From the complexity point of view, the lambda colouring problem of 
graphs is an $\mathcal{NP-}$hard problem \cite[Theorem~57]{Hale}. Survey articles on this well investigated 
problem can be found in \cite{Calamoneri,MR1139583,MR2245647}.

Throughout this article the set of all non-negative integers is denoted by $\mathbb{N}$. All the graphs are 
simple and their vertex sets are non-empty and finite. For a graph $G$, the vertex set and the edge set 
are denoted by $V(G)$ and $E(G)$ respectively. The subset $\N[u]=\left\{v\in V(G):\{u,v\}\in E(G)\right\}$ of the vertex 
set of a graph $G$ is called the \emph{neighbour set} or simply the \emph{neighbour} of the vertex $u$. 
A (vertex) \emph{colouring} of a graph $G$ is a mapping $c:V(G)\rightarrow\mathbb{N}$. The 
\emph{lambda colouring} of a graph $G$ is a mapping $c:V(G)\rightarrow\mathbb{N}$ such that for each $u,v\in V(G)$, 
$|c(u)-c(v)|+\di[G]{u,v}\geq3$. Here $\di[G]{u,v}$ denotes the \emph{distance} between vertices $u$ and $v$, 
i.e. the minimum number of edges connecting the vertices $u$ and $v$ through a path. If there is no 
edges connecting the vertices $u$ and $v$ through a path, then we may put $\di[G]{u,v}=\infty$. By well 
ordering property of $\mathbb{N}$, the range of the mapping $c$ attains $\underset{u\in V(G)}{\min} c(u)$. 
Therefore without loss of generality, we assume $\underset{u\in V(G)}{\min} c(u)=0$. The 
\emph{lambda chromatic number} of the graph $G$ is  the positive integer  
$\min\left\{\underset{u\in V(G)}{\max}c(u): c\textup{ is a lambda colouring of } G\right\}$.
A lambda colouring $c$ of $G$ is said to be \emph{optimal} if $\underset{u\in V(G)}{\max}c(u)$ equals to the 
lambda chromatic number. The \emph{coloured partition} of the vertex set $V(G)$ with respect to a lambda colouring 
$c$ is $C_{0},C_{1},\ldots,C_{m},\ldots,C_{T}$, where $C_{m}:=\{u\in V(G):c(u)=m\}$ and $T=\underset{u\in V(G)}{\max}c(u)$. 
Such coloured partition is said to be \emph{equitable} if for all integers $i$ and $j$ with $0\leq i,j\leq T$, 
$||C_{i}|-|C_{j}||\leq1$. With respect to a lambda colouring $c$ of the graph $G$, if the coloured partition 
of $V(G)$ is an equitable partition, then we call such lambda colouring is \emph{equitable}. Fu and Xie 
\cite{MR2598700} have studied equitable lambda colouring for Sierpi\'{n}ski graphs. A positive integer 
$h$ is said to be a \emph{hole} of the lambda colouring $c$, if for each vertex $u$, $c(u)\neq h$ but there 
exist at least one vertex $v$ such that $h<c(v)$. Note that a hole corresponds an empty colour class.
(Caution: The lambda colouring may not be an onto mapping, contrary to usual colouring of graphs.) 

Fishburn and Roberts \cite{MR1999706,MR2257271} extensively studied the possible graphs admitting at 
least one optimal but onto lambda colouring. Such graphs are known as \emph{full colourable} graphs. 
However in \cite{MR2332322}, the authors concentrated specifically on onto lambda colourings of any graph 
irrespective of being full colourable. They studied the associated optimal value in terms of its bounds. In fact,
they had obtained their results in the case where these bounds are attained. These results were expressed in 
terms of the number of edges, diameters and number of connected components. On the other hand, the extremal 
nature of the graphs, from the view point of inter-relation between lambda chromatic number and minimum number of holes, was studied in \cite{MR2354760}. 

In this article, we concentrated mainly on the edge distribution of an $n-$vertex graph (i.e. a graph with 
$n$ number of vertices) with lambda chromatic number $t$. In fact, this article has a two-way orientation 
namely, universality and extremality of family of graphs with lambda chromatic number $t$. 
\begin{question}
Our main focus is to answer the following two questions.  
\begin{enumerate}[(a)]
\item Can we find a graph $\Omega$ with lambda chromatic number $t$ such that any graph $G$ with lambda chromatic number 
$t$ is a subgraph of $\Omega$?
\item By means of explicit construction, can we classify all the $n-$vertex graphs, with lambda chromatic number $t$, which contain maximum number of edges? 
\end{enumerate} 
\end{question}
Regarding the Question (b), for $n\leq t+1$ we refer \cite{MR1383991}. However, a reader may realise that the answer of this case is incurred within Proposition~\ref{base}. Here we focus mainly for $n\geq t+1$. 

\section{Two examples and their universal properties}

If $G$ is a graph with lambda chromatic number $2$, then $G$ is a disjoint union of some edges. But if $G$ is a graph with lambda chromatic number $t\geq3$, then the problems (described in Question (a) and (b)) become non-trivial. Here we begin this section with (a) a sequence of graphs $\{\mathbb{G}_{n}\}_{n=3}^{\infty}$ and (b) a doubly sequence of family (set) of graphs $\{\mathsf{G}(t,l): t\geq3,l\geq1\}$ and study their lambda chromatic number and other properties. 

\begin{construction}\label{G_n}
Let $\{\mathbb{G}_{n}:n\in\mathbb{N}, n\geq3\}$ be a sequence of simple graphs defined via a recursive rule as follows: $\mathbb{G}_{3}$ be the graph with edge set $\left\{\{v_{0},v_{2}\},\{v_{0},v_{3}\},\{v_{1},v_{3}\}\right\}$. For $n\geq4$, 
the graph $\mathbb{G}_{n}$ has vertex set $\{v_{i}:0\leq i\leq n\}$. The edges of $\mathbb{G}_{n}$ are all the edges of $\mathbb{G}_{n-1}$ and the edge of the form $\{v_{i},v_{n}\}$, where $i$ is an integer with $0\leq i\leq n-2$. In total, for each integer $t\geq3$, $\mathbb{G}_{t}$ has exactly $(t+1)$ vertices and $\binom{t}{2}$ edges.
\end{construction}

It will be shown later that each graph can be modified through edge standardisation into disjoint union of some $\mathbb{G}_{t}$'s with or without some deleted vertices.  

\begin{construction}\label{G(t,l)}
Let $t\geq3$ be an integer and $V_{i}$, where $i$ is an integer with $0\leq i\leq t$,  be mutually disjoint sets of size $l$. Let $\mathsf{G}(t,l)$ be the family of graphs with vertex set $\overset{t}{\underset{i=0}\sqcup}V_{i}$. Each graph  $G\in\mathsf{G}(t,l)$ satisfies the following two properties. 
\begin{itemize}
\item If $x\in V_{m}$, then there exists a unique $y\in V_{p}$ such that $\{x,y\}$ is an edge of $G$, where $m$ 
and $p$ are integers with $0\leq m\leq p-2\leq t-2$.
\item Whenever $u$, $v\in V_{m}$, where $0\leq m\leq t$, $\{u,v\}$ is not an edge of $G$.
\end{itemize}
In total, each $G\in\mathsf{G}(t,l)$ has exactly $(t+1)l$ vertices and $\binom{t}{2}l$ edges.
\end{construction}

There is a link between the above two constructions. Precisely for each $n\geq3$, the graph $\mathbb{G}_{n}$  is the only member of $\mathsf{G}(n,1)$. In the following two results, we obtain an optimal lambda colouring of $\mathbb{G}_{n}$, where $n\geq3$, namely $v_{i}\mapsto i$ from $V(\mathbb{G}_{n})=\{v_{i}:0\leq i\leq n\}$ to $\mathbb{N}$. We found that such optimal colouring is an onto mapping. It concludes that under such colouring  
of $\mathbb{G}_{n}$ there is no hole, for each $n\geq3$.

\begin{lemma}\label{distance}
For each $n\geq4$ and $u,v\in V(\mathbb{G}_{n})$, $1\leq\di[\mathbb{G}_{n}]{u,v}\leq2$.
\end{lemma}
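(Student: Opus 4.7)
The plan is to argue by induction on $n$, using the recursive nature of Construction~\ref{G_n}.

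For the base case $n=4$, I would simply enumerate. The vertex set is $\{v_0,v_1,v_2,v_3,v_4\}$ with edge set $\{v_0v_2, v_0v_3, v_1v_3, v_0v_4, v_1v_4, v_2v_4\}$. Of the ten unordered pairs, six are edges, leaving only $\{v_0,v_1\}$, $\{v_1,v_2\}$, $\{v_2,v_3\}$, $\{v_3,v_4\}$. Each of these has a common neighbour (for instance $v_3$, $v_4$, $v_0$, $v_0$ respectively), so distance two suffices.

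For the inductive step, I would assume the statement for $\mathbb{G}_{n-1}$ with $n\geq5$ and consider distinct $u,v\in V(\mathbb{G}_n)$. If neither $u$ nor $v$ equals $v_n$, then both lie in $V(\mathbb{G}_{n-1})$, and since $\mathbb{G}_{n-1}$ is a subgraph of $\mathbb{G}_n$, the induction hypothesis gives $\di[\mathbb{G}_n]{u,v}\leq\di[\mathbb{G}_{n-1}]{u,v}\leq2$. If $v=v_n$ and $u=v_i$ with $0\leq i\leq n-2$, then $\{v_i,v_n\}$ is an edge by construction, so the distance is $1$.

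The only remaining pair is $\{v_{n-1},v_n\}$, which is the one nonedge incident to $v_n$; this is the case requiring a short argument, though not really an obstacle. I would observe that when $v_{n-1}$ was appended to $\mathbb{G}_{n-2}$, the edges $\{v_i,v_{n-1}\}$ for $0\leq i\leq n-3$ were introduced, while $v_n$ is adjacent to every $v_j$ with $0\leq j\leq n-2$. Since $n\geq5$ ensures $n-3\geq0$, the vertex $v_0$ (say) is a common neighbour of $v_{n-1}$ and $v_n$, so $\di[\mathbb{G}_n]{v_{n-1},v_n}=2$. The lower bound $\di[\mathbb{G}_n]{u,v}\geq1$ is immediate since $u\neq v$. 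This completes the induction.
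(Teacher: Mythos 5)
Your proof is correct and follows essentially the same route as the paper: induction on $n$ using the fact that $\mathbb{G}_{n-1}$ is a subgraph of $\mathbb{G}_n$, with the new vertex $v_n$ adjacent to all of $v_0,\ldots,v_{n-2}$ and at distance two from $v_{n-1}$. You are somewhat more explicit than the paper in verifying the base case $n=4$ and in exhibiting $v_0$ as a common neighbour of $v_{n-1}$ and $v_n$, which is a welcome addition rather than a deviation.
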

\begin{proof}
We compare the graphs $\mathbb{G}_{m}$ and $\mathbb{G}_{m+1}$, where $m\geq4$. We note that  $\mathbb{G}_{m}$ is a subgraph of $\mathbb{G}_{m+1}$. Therefore $\di[\mathbb{G}_{m+1}]{u,v}\leq\di[\mathbb{G}_{m}]{u,v}$, for each $u,v\in V(\mathbb{G}_{m})$. Apart form all the vertices and edges of $\mathbb{G}_{m}$, in $\mathbb{G}_{m+1}$ the new vertex is $v_{m+1}$ and the new edges are edge of the form  $\{v_{i},v_{m+1}\}$, where $i$ is an integer with $0\leq i\leq m-1$. Hence 
\begin{align*}
\di[\mathbb{G}_{m+1}]{v_{m+1},u}=\left\{\begin{array}{lcr}
                1 & \textnormal{if} & u\neq v_{m}\\
                2 & \textnormal{if} & u=v_{m}.
            \end{array}\right.
\end{align*}
Therefore supposing the result is true for $n=m\geq4$, we conclude that the result is true for $n=m+1$. The result is true for $n=4$. Hence the result follows by induction on $n$.
\end{proof}

\begin{theorem}\label{lambda_G_n}
For each $n\geq3$,the mapping $v_{i}\mapsto i$, from $V(\mathbb{G}_{n})$ to $\mathbb{N}$, is a lambda colouring and the lambda chromatic number of $\mathbb{G}_{n}$ is $n$.
\end{theorem}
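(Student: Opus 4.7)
The plan is to treat the upper and lower bounds on the lambda chromatic number separately. For the upper bound, I would first establish, by a straightforward induction on $n$, the adjacency rule
\[
\{v_i, v_j\} \in E(\mathbb{G}_n) \iff |i - j| \geq 2.
\]
The base case $n = 3$ is immediate from the given edge list $\{v_0,v_2\},\{v_0,v_3\},\{v_1,v_3\}$. In the inductive step, the only new edges in $\mathbb{G}_{n+1}$ are $\{v_i, v_{n+1}\}$ for $0 \leq i \leq n-1$, which are precisely the pairs involving $v_{n+1}$ whose index difference is at least $2$, so the rule carries over.

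Granted this, verifying that the assignment $c(v_i) = i$ is a lambda colouring is immediate: the values $c(v_i)$ are pairwise distinct, so the condition $|c(u)-c(v)| + \di[\mathbb{G}_n]{u,v} \geq 3$ is automatic whenever $\di[\mathbb{G}_n]{u,v} \geq 2$, while for adjacent $v_i, v_j$ it reduces to $|i-j| \geq 2$, which the adjacency rule guarantees. Since $\underset{u \in V(\mathbb{G}_n)}{\max}\, c(u) = n$, the lambda chromatic number of $\mathbb{G}_n$ is at most $n$.

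For the lower bound when $n \geq 4$, I would invoke Lemma~\ref{distance}: since every two vertices of $\mathbb{G}_n$ lie within distance two, every lambda colouring must assign pairwise distinct labels to the $n+1$ vertices. Hence its image contains $n+1$ distinct non-negative integers, and since the minimum value is $0$, the maximum label must be at least $n$.

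The case $n = 3$ is where Lemma~\ref{distance} does not apply and must be handled directly; this is the main, though minor, obstacle. I would suppose for contradiction that a lambda colouring of $\mathbb{G}_3$ attains maximum label $2$. The adjacency $v_0 \sim v_3$ forces $\{c(v_0), c(v_3)\} = \{0, 2\}$; the adjacency $v_1 \sim v_3$ then forces $c(v_1)$ to lie in $\{0, 2\} \setminus \{c(v_3)\} = \{c(v_0)\}$, so $c(v_1) = c(v_0)$. But $\di[\mathbb{G}_3]{v_0, v_1} = 2$ via the path $v_0 v_3 v_1$, which contradicts the distance-two requirement. Hence the lambda chromatic number of $\mathbb{G}_3$ is exactly $3$, completing the proof. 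Everything outside this small hand check is routine bookkeeping from the recursive definition and the preceding distance lemma.
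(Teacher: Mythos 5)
Your proposal is correct and follows essentially the same route as the paper: the upper bound comes from the explicit colouring $v_i\mapsto i$ verified via the recursive construction (you package the induction as the adjacency rule $\{v_i,v_j\}\in E(\mathbb{G}_n)\Leftrightarrow|i-j|\geq2$, the paper inducts on the colouring directly), and the lower bound for $n\geq4$ uses Lemma~\ref{distance} exactly as the paper does. Your explicit contradiction argument for $\mathbb{G}_3$ merely fills in the base case that the paper dismisses with ``can be verified directly.''
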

\begin{proof}
Suppose the result is true for $n=m\geq3$. Therefore $\bar{c}:V(\mathbb{G}_{m})\rightarrow\{0,1,\ldots,m\}$ defined by 
$\bar{c}(v_{i})=i$ is a lambda colouring, then the mapping $c:V(\mathbb{G}_{m+1})\rightarrow\mathbb{N}$ defined by
\begin{align*}
c(v_{i})=\left\{\begin{array}{lcr}
                \bar{c}(v_{i}) & \textnormal{if} & 0\leq i\leq m\\
                m+1 & \textnormal{if} & i=m+1
            \end{array}\right.
\end{align*}
is a colouring of $\mathbb{G}_{m+1}$. 

\noindent{\textsl{Claim} :} $c$ is a lambda colouring of $\mathbb{G}_{m+1}$.
\begin{proof}[\tt{Proof of claim} :]\renewcommand{\qedsymbol}{}
We note that for each $u$, $v\in V(\mathbb{G}_{m+1})$ $|c(u)-c(v)|\geq1$, so if $\di[\mathbb{G}_{m+1}]{u,v}=2$, then  
$|c(u)-c(v)|+\di[\mathbb{G}_{m+1}]{u,v}\geq3$. Now if $\di[\mathbb{G}_{m+1}]{u,v}=1$ for some $u$, $v\in V(\mathbb{G}_{m})$, then 
\begin{equation*}
|c(u)-c(v)|=|\bar{c}(u)-\bar{c}(v)|\geq2,
\end{equation*}
since by assumption, $\bar{c}$ is a lambda colouring of $\mathbb{G}_{m}$. So by Lemma~\ref{distance} the only case left where $\di[\mathbb{G}_{m+1}]{v_{m+1},u}=1$ with $u\in V(\mathbb{G}_{m})$. Here we explicitly have $u=v_{i}$, where $0\leq i\leq m-1$ and consequently $\bar{c}(u)=\bar{c}(v_{i})=i$ It implies that
\begin{equation*}
|c(v_{m+1})-c(u)|=|m+1-\bar{c}(u)|=|m+1-i|\geq2.
\end{equation*}
Hence the claim is established.
\end{proof}

By the above claim, it implies that the lambda chromatic number of $\mathbb{G}_{m+1}$ is at most $m+1$. Let $c:V(\mathbb{G}_{m+1})\rightarrow\mathbb{N}$ be a lambda colouring. By Lemma~\ref{distance} for each $u,v\in V(\mathbb{G}_{m+1})$, $1\leq \di[\mathbb{G}_{m+1}]{u,v}\leq2$, therefore every vertex must receive distinct colours 
(non-negative integers). Hence we need at least $|V(\mathbb{G}_{m+1})|=m+2$ colours to colour the vertices of $\mathbb{G}_{m+1}$. Consequently,
\begin{equation*}
\max\{c(u):u\in V(\mathbb{G}_{m+1})\}\geq\max\{0,1,\ldots,m+1\}=m+1.
\end{equation*}
Since $c:V(\mathbb{G}_{m+1})\rightarrow\mathbb{N}$ is an arbitrary lambda colouring, the lambda chromatic number of $\mathbb{G}_{m+1}$ is at least $m+1$. This means the result is true for $n=m+1$. It can be verified directly that the 
result is true for $n=3$. Hence the result follows by induction on $n$.
\end{proof}

The following two results give us a no-hole optimal lambda colouring of each of the member graphs of 
$\mathsf{G}(t,l)$.

\begin{theorem}\label{lambdacolour}
For positive integers $t\geq3$ and $l$, the colouring map $v_{m}\mapsto m$, where $v_{m}\in V_{m}$ and $0\leq m\leq t$ is a lambda colouring of each member graph $G\in\mathsf{G}(t,l)$. 
\end{theorem}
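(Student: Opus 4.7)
My plan is to verify the lambda condition $|c(u)-c(v)|+\di[G]{u,v}\geq 3$ for every pair of distinct vertices $u,v\in V(G)$ by a three-way case split on the classes $V_{m},V_{p}$ containing $u$ and $v$. If $|m-p|\geq 2$, then $|c(u)-c(v)|=|m-p|\geq 2$ and $\di[G]{u,v}\geq 1$, so the inequality is automatic. If $p=m+1$, the first defining property of $\mathsf{G}(t,l)$ permits edges only between classes whose index-difference is at least $2$, so $u$ and $v$ are non-adjacent; hence $\di[G]{u,v}\geq 2$, and combined with $|c(u)-c(v)|=1$ the bound follows.

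The main work lies in the case $u,v\in V_{m}$ with $u\neq v$, where $|c(u)-c(v)|=0$ and we must therefore show $\di[G]{u,v}\geq 3$. The second defining property forbids intra-class edges, yielding $\di[G]{u,v}\geq 2$. To rule out $\di[G]{u,v}=2$, I would suppose a common neighbour $w\in V_{q}$ exists, note that $|m-q|\geq 2$ is forced by the first defining property, and split on the sign of $q-m$. If $q<m$, applying the uniqueness clause of the first defining property to $w\in V_{q}$ with the pair $(q,m)$ produces a unique $V_{m}$-neighbour of $w$, forcing $u=v$, a contradiction. If $q>m$, I would combine the uniqueness of the $V_{q}$-neighbour of each vertex of $V_{m}$ with the global count of $\binom{t}{2}l$ edges and the equality $|V_{m}|=|V_{q}|=l$ to conclude that the $l$ edges joining $V_{m}$ to $V_{q}$ form a matching; hence distinct $u\neq v$ in $V_{m}$ have distinct partners in $V_{q}$, precluding the common neighbour $w$.

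The step I expect to be most delicate is the $q>m$ branch, because the uniqueness clause is literally phrased only in the upward direction; converting the ``unique upward partner'' into the injectivity of the pairing $V_{m}\to V_{q}$ requires a consistency argument built from the global edge count and the equality of class sizes. Once this matching interpretation is in place, the three cases assemble cleanly and verify that $c$ is a lambda colouring of $G$.
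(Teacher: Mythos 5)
Your case split is essentially the paper's own (the paper checks distance $1$ and distance $2$ directly, which amounts to your three cases), and everything except the $q>m$ branch of your same-class case is fine. The genuine problem is the edge-counting argument you propose for that branch. The number of edges between $V_{m}$ and $V_{q}$ is obtained by summing, over $x\in V_{m}$, the number of neighbours of $x$ in $V_{q}$; since the (literal, upward) uniqueness clause makes each summand equal to $1$, this sum is $l$ whether or not the assignment $V_{m}\to V_{q}$ is injective. Consequently the global total is $\binom{t}{2}l$ in either case, and the count $\binom{t}{2}l$ together with $|V_{m}|=|V_{q}|=l$ carries no information that could rule out two vertices of $V_{m}$ sharing their unique $V_{q}$-partner while some vertex of $V_{q}$ is left unmatched. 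Concretely, with $t=3$ and $l=2$, taking $V_{0}=\{a_{1},a_{2}\}$, $V_{2}=\{c_{1},c_{2}\}$ and the edges $\{a_{1},c_{1}\},\{a_{2},c_{1}\}$ between these two classes satisfies the upward-uniqueness clause and contributes the expected $2$ edges to the total, yet $a_{1}$ and $a_{2}$ are at distance two with equal colour. So the bridge you hope to build from the literal definition to the matching does not exist, and the step you yourself flag as delicate is in fact where the argument breaks.

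What resolves it is not a counting argument but the reading of the definition: the first defining property of $\mathsf{G}(t,l)$ is intended symmetrically, i.e.\ the edges between $V_{m}$ and $V_{p}$ (for $|m-p|\geq2$) form a perfect matching, so that a vertex of \emph{either} class has a unique neighbour in the other. This is how the paper uses the definition throughout: its own proof of this theorem dismisses your Case 3 in one line (a common neighbour $w$ of $u,v\in V_{c(u)}$ ``contradicts the definition of $G$'', i.e.\ $w$ cannot have two neighbours in a single class, irrespective of whether its class index is above or below), Theorem~\ref{universality} completes the sets $Z_{m,p}$ and $Z_{p,m}$ to a perfect matching, and the corollary following it requires every vertex of a member graph to have degree exactly $t-1$, which fails in the configuration above. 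Once you adopt that reading, your $q>m$ branch becomes word-for-word identical to your $q<m$ branch and no auxiliary lemma is needed.
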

\begin{proof}
We denote the aforementioned mapping by $c:\overset{t}{\underset{i=0}\sqcup}V_{i}\rightarrow\mathbb{N}$. Let $G$ be a member graph of $\mathsf{G}(t,l)$ and $u$,$v$ are two vertices of $G$. If $\di[G]{u,v}\geq3$, then 
$|c(u)-c(v)|+\di[G]{u,v}\geq3$. Therefore we have to check only the following two cases.

\noindent{\textsf{Case} I : }  $\di[G]{u,v}=2$. To show $|c(u)-c(v)|\geq1$.\\
Suppose $c(u)=c(v)$. Since $\di[G]{u,v}=2$, it means there exists $w\in V_{c(w)}$ such that $\{u,w\}$ and $\{v,w\}$ are edges of $G$. A contradiction to the definition of $G$ as $u,v\in V_{c(u)}$. Hence $|c(u)-c(v)|\geq1$.

\noindent{\textsf{Case} II : }  $\di[G]{u,v}=1$, i.e. if $\{u,v\}$ is an edge of $G$. To show $|c(u)-c(v)|\geq2$.\\
Let $u\in V_{m}$, where $1\leq m\leq t-1$, then $v\in V_{i}$, where $i$ is an integer with $0\leq i\leq t$ but 
$i\neq m-1,m,m+1$. Hence $|c(u)-c(v)|=|m-i|\geq2$. If $u\in V_{0}$, then $v\in V_{i}$, where $i$ is an integer with $2\leq i\leq t$. Hence $|c(u)-c(v)|=|0-i|\geq2$. Also if $u\in V_{t}$, then $v\in V_{i}$, where $i$ is an integer with $0\leq i\leq t-2$. Hence $|c(u)-c(v)|=|t-i|\geq2$.
\end{proof}

\begin{theorem}
For positive integers $t\geq3$ and $l$, the lambda chromatic number of each member graph $G\in\mathsf{G}(t,l)$ is $t$.
\end{theorem}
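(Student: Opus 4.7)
My plan is to handle the upper and lower bounds separately and to use a single vertex-neighbourhood argument for the nontrivial direction. The upper bound is immediate from Theorem~\ref{lambdacolour}, which already exhibits a lambda colouring of $G$ attaining maximum colour $t$, so the lambda chromatic number is at most $t$; I would devote only a sentence to this.

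For the matching lower bound I would argue by contradiction, assuming a lambda colouring $c$ of $G$ with $\max c \leq t-1$, so that $c$ uses only the $t$ values $0, 1, \ldots, t-1$. I then localise at any fixed vertex $y \in V_{0}$. The defining axioms of $\mathsf{G}(t,l)$ give $y$ exactly one neighbour in each $V_p$ with $p \geq 2$, and no neighbour inside $V_0$ (by the second defining property) nor inside $V_1$ (since the first defining property forces $p \geq m+2$, so there are no edges between $V_0$ and $V_1$). Hence $y$ has exactly $t-1$ neighbours, call them $w_2, \ldots, w_t$, and any two of them share the common neighbour $y$, so $\{y, w_2, \ldots, w_t\}$ is a set of $t$ vertices that are pairwise at distance at most $2$ in $G$. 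They must therefore receive $t$ pairwise distinct values under $c$, and since only $t$ values are available, $\{c(y), c(w_2), \ldots, c(w_t)\} = \{0, 1, \ldots, t-1\}$.

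I would then close with a one-line counting step. Writing $k = c(y)$, the lambda condition $|c(y) - c(w_p)| \geq 2$ for each adjacent $w_p$ forbids both $k-1$ and $k+1$ from $\{c(w_2), \ldots, c(w_t)\} = \{0, 1, \ldots, t-1\} \setminus \{k\}$; but for every $k \in \{0, 1, \ldots, t-1\}$ at least one of $k-1$ or $k+1$ does lie in that set, a contradiction. Combined with the upper bound, this forces the lambda chromatic number of $G$ to equal $t$.

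The step that most benefits from care is extracting, directly from the construction of $\mathsf{G}(t,l)$, that each vertex of $V_0$ has degree exactly $t-1$ independently of $l$; once this is in place the argument is a clean pigeonhole on the colour palette at a single vertex and never needs to examine the global structure of $G$ beyond one closed neighbourhood, so I expect no serious obstacle.
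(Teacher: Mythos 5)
Your proposal is correct and follows essentially the same route as the paper: both localise at a vertex of $V_{0}$, use the construction to see it has exactly $t-1$ neighbours (one in each $V_{p}$, $p\geq2$) that are pairwise within distance two of each other and each at gap at least two from the centre, and then conclude the colour palette cannot fit into $\{0,\ldots,t-1\}$. Your explicit pigeonhole on $k-1$ and $k+1$ merely spells out the final implication that the paper leaves terse.
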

\begin{proof}
Let $c:V(G)\rightarrow\mathbb{N}$ be a lambda colouring of $G$. Fix an $v_{0}\in V_{0}$, and $v_{0i}$, where $2\leq i\leq t$, denotes the unique neighbour of $v_{0}$ in $V_{i}$. Hence $v_{0}$ has $t-1$ neighbours in $G$. Since 
$\di[G]{v_{0i},v_{0j}}\leq2$, for $2\leq i,j\leq t$, each of the $v_{0i}$ must receive distinct non-negative integers (colours) namely $c(v_{02}),\ldots,c(v_{0t})$. 
Moreover, $v_{0}$ is adjacent to each of $v_{0i}$, therefore $|c(v_{0})-c(v_{0i})|\geq2$, for each integer $i$ with 
$2\leq i\leq t$. It implies that 
\begin{equation*}
\max\{c(v):v\in V(G)\}\geq\max\{c(v_{0}),c(v_{02}),\ldots,c(v_{0t})\}\geq t.
\end{equation*}
Since $c$ is an arbitrary lambda colouring, we have the lambda chromatic number of such graph $G$ is at least $t$. From Theorem~\ref{lambdacolour}, we have the lambda chromatic number of such graph $G$ is at most $t$ and the result follows.
\end{proof}

Let $G$ be a graph and $c:V(G)\rightarrow\mathbb{N}$ be a lambda colouring. We note that the relation $\sim$ defined on $V(G)$ by $u\sim v$ if and only if $c(u)=c(v)$. Such relation is an equivalence relation on $V(G)$. By the notation $[c(u)]$, we denote the equivalence class containing the vertex $u$. Note that $[c(u)]$ is non-empty and equals to $C_{m}$ for some integer $m$, with $0\leq m\leq t$.

\begin{lemma}\label{colourdistribution}
Let $G$ be a graph, $c:V(G)\rightarrow\mathbb{N}$ be a lambda colouring and $[c(u)]$, $[c(v)]$ be two different colour classes. Then for each $x\in[c(u)]$ there exists at most one vertex $y\in[c(v)]$ such that $\{x,y\}$ is an edge of $G$.
\end{lemma}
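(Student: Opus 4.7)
The plan is to argue by contradiction. Suppose, toward a contradiction, that there exist a vertex $x\in[c(u)]$ and two distinct vertices $y_{1},y_{2}\in[c(v)]$ such that both $\{x,y_{1}\}$ and $\{x,y_{2}\}$ are edges of $G$. Since $y_{1},y_{2}$ lie in the same colour class, we have $c(y_{1})=c(y_{2})$, hence $|c(y_{1})-c(y_{2})|=0$.

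Next I would estimate $\di[G]{y_{1},y_{2}}$. The vertex $x$ is a common neighbour of $y_{1}$ and $y_{2}$, so the path $y_{1}xy_{2}$ forces $\di[G]{y_{1},y_{2}}\leq 2$. Moreover, $y_{1}\neq y_{2}$ gives $\di[G]{y_{1},y_{2}}\geq 1$, so the distance is either $1$ or $2$.

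Now I would apply the defining inequality of a lambda colouring to the pair $(y_{1},y_{2})$: we must have $|c(y_{1})-c(y_{2})|+\di[G]{y_{1},y_{2}}\geq 3$. Plugging in $|c(y_{1})-c(y_{2})|=0$ yields $\di[G]{y_{1},y_{2}}\geq 3$, contradicting the bound $\di[G]{y_{1},y_{2}}\leq 2$ just obtained.

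This contradiction rules out the existence of two such neighbours of $x$ in $[c(v)]$, completing the proof. There is no serious obstacle here; the only subtlety is to make sure the two cases $\di[G]{y_{1},y_{2}}=1$ and $\di[G]{y_{1},y_{2}}=2$ are simultaneously dismissed by the single inequality coming from the lambda colouring definition, which is what makes the argument uniform.
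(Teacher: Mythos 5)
Your proof is correct and follows essentially the same route as the paper: both derive a contradiction from applying the lambda colouring inequality to the pair $y_{1},y_{2}$, which share the colour $c(v)$ and the common neighbour $x$. The only cosmetic difference is that the paper first rules out adjacency of $y_{1},y_{2}$ to pin the distance at exactly $2$, whereas you dismiss distances $1$ and $2$ simultaneously with the single inequality; both are fine.
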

\begin{proof}
Suppose there exist $y_{1},y_{2}\in[c(v)]$ such that $\{x,y_{1}\}$ and $\{x,y_{2}\}$ are edges. Since $c(y_{1})=c(y_{2})=c(v)$, we have $y_{1}$ and $y_{2}$ are not adjacent, hence $\di[G]{y_{1},y_{2}}=2$. A contradiction arises since \\ $2=|c(y_{1})-c(y_{2})|+\di[G]{y_{1},y_{2}}\geq3$.
\end{proof}
 
The above lemma implies that between any two distinct colour classes $A$ and $B$, the subgraph 
$\left\{\{x,z\},\{y,z\}\right\}$, where $x,y\in A$ and $z\in B$, is the forbidden subgraph in the graph $G$. The following result is one of the main theorems of this article. It asserts the affirmative answer of the Question~(a) posed in the introduction.

\begin{theorem}\label{universality}
Let $G$ be a graph with lambda chromatic number $t\geq3$ and $C_{0},\ldots,C_{t}$ be the coloured partition  of the vertex set of $G$ with respect to an optimal lambda colouring $c:V(G)\rightarrow\{0,\ldots,t\}$. Then there exists a graph $G^{*}\in\mathsf{G}(t,l)$ such that $G$ is a subgraph of $G^{*}$, where $l=\underset{u\in V(G)}{\max}{|[c(u)]|}$. 
\end{theorem}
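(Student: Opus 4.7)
The plan is to construct $G^{*}$ by \emph{padding} each colour class $C_{i}$ to a common size $l$ with isolated dummy vertices and then \emph{completing} the inter-class edges of $G$, which are naturally partial matchings, into perfect matchings between the required pairs of classes. More precisely, for every $0\leq i\leq t$ I would choose a set $D_{i}$ of $l-|C_{i}|$ fresh vertices (all the $D_{i}$'s pairwise disjoint and disjoint from $V(G)$) and set $V_{i}:=C_{i}\sqcup D_{i}$, so that $|V_{i}|=l$. The vertex set of $G^{*}$ will be $\bigsqcup_{i=0}^{t}V_{i}$, which already contains $V(G)$.

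Before specifying the edges, I would record two structural observations about the edges of $G$ with respect to $c$. First, vertices inside the same class $C_{m}$ cannot be adjacent, since then $|c(u)-c(v)|+\textnormal{d}_{G}(u,v)=0+1<3$; similarly, no edge of $G$ can join $C_{m}$ to $C_{m+1}$, because then $|c(u)-c(v)|+\textnormal{d}_{G}(u,v)=1+1<3$. Thus every edge of $G$ lies between some $C_{m}$ and $C_{p}$ with $0\leq m\leq p-2\leq t-2$. Secondly, by Lemma~\ref{colourdistribution}, for each such pair $(m,p)$ the edges of $G$ between $C_{m}$ and $C_{p}$ form a \emph{matching}: every vertex of $C_{m}$ has at most one neighbour in $C_{p}$, and vice versa by symmetry.

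The construction of $G^{*}$ is now routine. For each pair $(m,p)$ with $0\leq m\leq p-2\leq t-2$ I would take the partial matching $M_{m,p}\subseteq E(G)$ between $C_{m}$ and $C_{p}$ and extend it to a perfect matching $\widetilde{M}_{m,p}$ between $V_{m}$ and $V_{p}$ by pairing the unmatched vertices of $V_{m}$ with the unmatched vertices of $V_{p}$ arbitrarily; this is possible because $|V_{m}|=|V_{p}|=l$ forces the two sides to have equinumerous unmatched parts. Setting $E(G^{*}):=\bigcup_{m,p}\widetilde{M}_{m,p}$, every vertex $x\in V_{m}$ has exactly one neighbour in each $V_{p}$ with $p\geq m+2$, and no two vertices in the same $V_{m}$ are adjacent, so $G^{*}\in\mathsf{G}(t,l)$ by Construction~\ref{G(t,l)}; and $E(G)\subseteq E(G^{*})$ by construction, so $G$ embeds in $G^{*}$.

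No step here looks genuinely difficult. The conceptual content is entirely in the two structural observations of the preceding paragraph: that edges of $G$ avoid both self-loops of colour classes and the ``diagonally adjacent'' pairs $(C_{m},C_{m+1})$, and that Lemma~\ref{colourdistribution} forces each inter-class edge set to be a matching. Once those are in hand, the enlargement of classes to common size $l$ and the completion of partial matchings to perfect ones are purely combinatorial bookkeeping, and the verification that the resulting graph satisfies the two defining properties of $\mathsf{G}(t,l)$ is immediate.
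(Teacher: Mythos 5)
Your proposal is correct and follows essentially the same route as the paper's own proof: the dummy sets $D_{i}$ play the role of the paper's padding sets $Y_{m}$, and completing each partial matching between $V_{m}$ and $V_{p}$ to a perfect one by arbitrarily pairing the unmatched vertices is exactly the paper's pairing of the sets $Z_{m,p}$ and $Z_{p,m}$. Your explicit preliminary observations (no intra-class edges, no edges between consecutive classes, and the matching property from Lemma~\ref{colourdistribution}) are the same ingredients the paper uses, just stated more transparently.
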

\begin{proof}
We note that $C_{m}$ is empty for some integer $m$ with $0\leq m\leq t$ if and only if $m$ is an hole of the lambda colouring $c$, hence $0\leq|C_{m}|\leq l$. For each integer $m$ with $0\leq m\leq l$, we adjoin a disjoint set $Y_{m}$ with $C_{m}$ such that $|C_{m}|+|Y_{m}|=l$. (Here such $Y_{m}$'s, $0\leq m\leq l$, are also mutually disjoint.) So let $Y_{m}:=\{x^{m}_{i}:|C_{m}|+1\leq i\leq l\}$ and $V_{m}:=C_{m}\sqcup Y_{m}$. For 
$0\leq m,p\leq t$ and $m\neq p$, let 
\begin{equation*}
Z_{m,p}=\left\{u\in V_{m}: u\textup{ has no neighbour in } V_{p}\right\}.
\end{equation*}
By using Lemma~\ref{colourdistribution}, we have for $0\leq m\leq p-2\leq t-2$ and for each $x\in V_{m}\smallsetminus Z_{m,p}$ there exists exactly one vertex $y\in V_{p}\smallsetminus Z_{p,m}$ such that $\{x,y\}$ is an edge in $G$. Conversely, if $\{u,v\}$ is an edge in $G$, then due to the fact $c$ is a lambda colouring, there exist integers $m$ and $p$, with 
$0\leq m\leq p-2\leq t-2$, such that $[c(u)]=C_{m}$ and $[c(v)]=C_{p}$. From Lemma~\ref{colourdistribution}, we conclude that $v$ is the only neighbour of $u$ in $C_{p}$ and vice-versa.    

We construct a graph $G^{*}$ with vertex set $\overset{t}{\underset{m=0}\sqcup}V_{m}$. The edges of $G$ are edges of $G^{*}$. We note that $|Z_{m,p}|=|Z_{p,m}|$ and for each $u\in Z_{m,p}$ we associate a unique $v\in Z_{p,m}$ and construct an edge $\{u,v\}$ of $G^{*}$, where  $0\leq m\leq p-2\leq t-2$. Hence $G^{*}$ is the required member of $\mathsf{G}(t,l)$. 
\end{proof}

One of the most discussed conjectures relates the lambda chromatic number $t$ of a 
graph $G$ and $\triangle=\max\{|\N[u]|:u\in V(G)\}$. Such conjecture is made it known as the Griggs-Yeh Conjecture 
\cite[Conjecture~10.1]{MR1186826}, it states that $t\leq\triangle^{2}$. So far the best known upper bound of the lambda chromatic number is $\triangle^{2}+\triangle-2$ \cite{MR2392058}.
As a corollary of Theorem~\ref{universality}, we have proved a tight lower bound of the lambda chromatic number 
of $G$ in terms of $\triangle$. The following lower bound is tight in the sense that $\mathbb{G}_{t}$ and each 
member graph of $\mathsf{G}(t,l)$, where $l$ is a positive integer, attains such lower bound.  

\begin{corollary}
Let $G$ be a graph with lambda chromatic number $t$ and  $\triangle=\max\{|\N[u]|:u\in V(G)\}$, where 
$\N[u]$ denotes the neighbour of the vertex $u$. Then $\triangle+1\leq t$.
\end{corollary}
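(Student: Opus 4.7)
The plan is to bootstrap the corollary off of the universality result, Theorem~\ref{universality}. Let $G$ be a graph with lambda chromatic number $t$ and fix any optimal lambda colouring. By Theorem~\ref{universality}, there exists an integer $l\geq1$ and a graph $G^{*}\in\mathsf{G}(t,l)$ such that $G$ is a subgraph of $G^{*}$. Since passing to a supergraph can only increase the degree of each vertex, it suffices to prove the bound $\triangle(G^{*})\leq t-1$ for every member of $\mathsf{G}(t,l)$.

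Next, I would read off the degree of a vertex $x\in V_{m}$ in $G^{*}$ directly from Construction~\ref{G(t,l)}. The construction guarantees that, for each $p\in\{0,1,\ldots,t\}$ satisfying $|p-m|\geq 2$, the vertex $x$ has exactly one neighbour in $V_{p}$, and no edges arise from any other $V_{p}$ (in particular, there are no edges inside $V_{m}$ by the second bullet of the construction). Thus the degree of $x$ in $G^{*}$ equals $|\{p:0\leq p\leq t,\ |p-m|\geq 2\}|$, which is $t-2$ when $1\leq m\leq t-1$ and $t-1$ when $m\in\{0,t\}$. Taking the maximum over $m$ gives $\triangle(G^{*})=t-1$.

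Combining the two observations, $\triangle(G)\leq\triangle(G^{*})=t-1$, i.e.\ $\triangle+1\leq t$, as required. Tightness of this bound is exactly the statement that $\mathbb{G}_{t}$ (and more generally the members of $\mathsf{G}(t,l)$) achieve equality, since by the degree count above each such graph has maximum degree $t-1$ and lambda chromatic number $t$ by Theorem~\ref{lambda_G_n}.

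There is no genuine obstacle here once Theorem~\ref{universality} is in hand; the only point requiring care is to make sure the degree count correctly handles the boundary colour classes $V_{0}$ and $V_{t}$, where the constraint $|p-m|\geq 2$ excludes only two indices instead of three, so these classes supply the maximum degree in $G^{*}$.
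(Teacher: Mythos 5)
Your proposal is correct and follows essentially the same route as the paper: invoke Theorem~\ref{universality} to embed $G$ in some $G^{*}\in\mathsf{G}(t,l)$ and then use that $\max\{|\N[x]|:x\in V(G^{*})\}=t-1$. The paper states this degree bound without the explicit count over the classes $V_{0},\ldots,V_{t}$; your computation (degree $t-2$ for interior classes, $t-1$ for $V_{0}$ and $V_{t}$) simply makes that step explicit.
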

\begin{proof}
Let $u$ be a vertex of $G$ with $\triangle=|\N[u]|$. Then using Theorem~\ref{universality}, we have an integer $l$ (as prescribed in the theorem) and a graph $G^{*}\in\mathsf{G}(t,l)$ such that $\left\{\{u,v\}:v\in \N[u]\right\}$ is a subgraph of $G^{*}$. The result follows since $\max\{|\N[x]|:x\in V(G^{*})\}=t-1$. 
\end{proof}

\section{Maximum number of edges and equitable partition}

Let $G$ be a graph with lambda chromatic number $t$ and $c:V(G)\rightarrow\{0,\ldots,t\}$ be a lambda colouring.
Also let $C_{m}:=\{u\in V(G):c(u)=m\}:=\{u^{m}_{i}:1\leq i\leq|C_{m}|\}$ (say), where $m$ is an integer, 
$0\leq m\leq t$. Throughout this section, we assume that the aforementioned lambda chromatic number of $G$ is 
$t\geq3$. With respect to the lambda colouring $c$ of the graph $G$, we denote
\begin{equation*}
\mathfrak{M}_{c}(G):=\left\{C_{M}:|C_{M}|=\underset{0\leq j\leq t}{\max}|C_{j}|\right\},
\mathfrak{m}_{c}(G):=\left\{C_{m}:|C_{m}|=\underset{0\leq j\leq t}{\min}|C_{j}|\right\}\textup{ and }
\nabla_{c}(G):=\underset{0\leq j\leq t}{\max}|C_{j}|-\underset{0\leq j\leq t}{\min}|C_{j}|.
\end{equation*}

We also fix two more notations here. Let  
\begin{equation*}
\M{C_{0},\ldots,C_{t}}:=\overset{t-2}{\underset{i=0}\sum}\overset{t}{\underset{j=i+2}\sum}\min\{|C_{i}|,|C_{j}|\},
\end{equation*}
where $G$ is a graph with lambda chromatic number $t$ and $C_{0},\ldots,C_{t}$ be the coloured partition of the vertex set of $G$ with respect to an optimal lambda colouring $c:V(G)\rightarrow\{0,\ldots,t\}$. Suppose $X$ and $Y$ are two subsets of the vertex set of the graph $G$. The number of edges of the form $\{x,y\}$, where $x\in X$ and $y\in Y$, is denoted by 
$\ed[G]{X,Y}$.  

\begin{theorem}\label{notation_M}
Let $G$ be a graph with lambda chromatic number $t$ and $C_{0},\ldots,C_{t}$ be the coloured partition of the vertex set of 
$G$ with respect to an optimal lambda colouring $c$. Then 
\begin{enumerate}[\normalfont(a)]
\item $G$ has at most $\M{C_{0},\ldots,C_{t}}$ edges.
\item $G$ has exactly $\M{C_{0},\ldots,C_{t}}$ edges if and only if for each $x\in C_{i}$ there exists exactly one vertex $y\in C_{j}$ such that $\{x,y\}$ is an edge of $G$, where $0<|C_{i}|\leq|C_{j}|$ with $0\leq i,j\leq t$ and $|i-j|\geq2$.
\end{enumerate}
\end{theorem}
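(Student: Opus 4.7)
The plan is to bound $\ed[G]{C_i, C_j}$ individually for each admissible pair of colour classes and then sum. The only structural tool I need beyond the definition of a lambda colouring is Lemma~\ref{colourdistribution}, which forbids a ``cherry'' in which one vertex has two neighbours in a single colour class.

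First I would observe that every edge of $G$ joins two colour classes $C_i,\,C_j$ with $|i-j|\ge 2$: if $\{x,y\}\in E(G)$ then $\di[G]{x,y}=1$ forces $|c(x)-c(y)|\ge 2$ from the defining inequality of a lambda colouring. In particular $C_m$ is an independent set for each $m$ and no edges run between $C_i$ and $C_{i+1}$, so
$$|E(G)| \;=\; \sum_{\substack{0\le i<j\le t\\ j-i\ge 2}} \ed[G]{C_i,C_j}.$$
Next, Lemma~\ref{colourdistribution} gives, for distinct classes $C_i\neq C_j$, that each $x\in C_i$ has at most one neighbour in $C_j$; summing this over $x\in C_i$ yields $\ed[G]{C_i,C_j}\le |C_i|$, and the symmetric statement gives $\ed[G]{C_i,C_j}\le |C_j|$, hence $\ed[G]{C_i,C_j}\le\min\{|C_i|,|C_j|\}$. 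Substituting into the displayed identity proves part (a).

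For part (b), equality in (a) holds if and only if the bound $\ed[G]{C_i,C_j}\le\min\{|C_i|,|C_j|\}$ is tight for every admissible pair $(i,j)$. If $0<|C_i|\le|C_j|$ and $j-i\ge 2$, tightness means there are exactly $|C_i|$ edges between $C_i$ and $C_j$; combined with the per-vertex ceiling of at most one neighbour (from Lemma~\ref{colourdistribution} applied to vertices of $C_i$), this forces every $x\in C_i$ to have exactly one neighbour in $C_j$. The reverse implication is a direct count: if each $x\in C_i$ (the smaller side) has a unique partner in $C_j$ then $\ed[G]{C_i,C_j}=|C_i|=\min\{|C_i|,|C_j|\}$, and summing recovers $\M{C_0,\ldots,C_t}$.

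There is no serious obstacle; the argument is essentially an accounting of Lemma~\ref{colourdistribution} one pair of classes at a time. The only mild subtlety is the asymmetric phrasing of the equivalence in (b): the theorem prescribes a unique partner only for vertices of the \emph{smaller} class, reflecting the fact that $\min\{|C_i|,|C_j|\}$ edges saturate the per-vertex cap exactly on the smaller side while leaving $|C_j|-|C_i|$ vertices of the larger class with no partner in $C_i$ at all.
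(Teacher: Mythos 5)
Your proposal is correct and follows essentially the same route as the paper: both arguments note that edges can only join classes whose indices differ by at least two, invoke Lemma~\ref{colourdistribution} to get $\ed[G]{C_i,C_j}\leq\min\{|C_i|,|C_j|\}$, and characterise equality pairwise. Your remark on the asymmetric phrasing of (b) (uniqueness demanded only on the smaller side) is a accurate reading of why the statement is formulated that way.
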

\begin{proof}
Since $c$ is a lambda colouring, we have $\ed[G]{C_{p}, C_{p+1}}=0$ for each integer $p$, with $0\leq p\leq t-1$. By using Lemma~\ref{colourdistribution}, we have $\ed[G]{C_{i},C_{j}}\leq\min\{|C_{i}|,|C_{j}|\}$, where $i$ and $j$ are integers with 
$0\leq i\leq j-2\leq t-2$. Hence the result (a) follows

If for all integers $p$ and $q$, with $0\leq p\leq q-2\leq t-2$, and for each $x\in C_{p}$ there exists exactly one vertex 
$y\in C_{q}$ such that $\{x,y\}$ is an edge of $G$, then $0<|C_{p}|\leq|C_{q}|$ and 
$\ed[G]{C_{p},C_{q}}=|C_{p}|=\min\{|C_{p}|,|C_{q}|\}$. Hence the number of edges is $\M{C_{0},\ldots,C_{t}}$. Conversely, if for some integers $p$ and $q$, with $0\leq p\leq q-2\leq t-2$, there exists $x\in C_{p}$ such that for all $y\in C_{q}$, $\{x,y\}$ is not an edge of $G$, then using the argument from part (a), we have $\ed[G]{C_{p},C_{q}}<\min\{|C_{p}|,|C_{q}|\}$. It implies that the number of edges is strictly less than $\M{C_{0},\ldots,C_{t}}$ and the result (b) follows.
\end{proof}

The above theorem informs us that the quest for a graph $G$ with lambda chromatic number $t$, which contains 
maximum number of edges, is boiled down to the search for a coloured partition $C_{0},\ldots,C_{t}$, originating 
from an optimal lambda colouring $c:V(G)\rightarrow\{0,\ldots,t\}$ where $\M{C_{0},\ldots,C_{t}}$ is maximum. Henceforth, our main focus is to search for such coloured partitions. 

\begin{proposition}\label{subgraph_relation}
Let $G'$ be a subgraph of the graph $G$. If the lambda chromatic numbers of $G'$ and $G$ are respectively $t'$ and $t$, then $t'\leq t$.
\end{proposition}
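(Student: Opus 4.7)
The plan is to take an optimal lambda colouring of $G$ and restrict it to the vertex set of $G'$, verifying that (after a trivial normalisation) this restriction is a lambda colouring of $G'$ whose maximum assigned value is at most $t$. The whole argument rests on one key monotonicity fact and one bookkeeping step, so I would expect no substantial obstacle.

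First I would record the essential observation that distances are monotone under the subgraph relation: since $V(G') \subseteq V(G)$ and $E(G') \subseteq E(G)$, every path in $G'$ between two vertices $u,v \in V(G')$ is also a path in $G$, so $\di[G]{u,v} \leq \di[G']{u,v}$ (the convention $\di[G]{u,v} = \infty$ is harmless here, since the inequality $|c(u)-c(v)| + \infty \geq 3$ is trivial). Next I would fix an optimal lambda colouring $c \colon V(G) \to \{0,1,\ldots,t\}$ witnessing the lambda chromatic number of $G$, set $m := \min_{u \in V(G')} c(u)$, and define $\tilde c \colon V(G') \to \mathbb{N}$ by $\tilde c(u) = c(u) - m$. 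By construction, $\min_{u \in V(G')} \tilde c(u) = 0$, which matches the normalisation convention adopted in the paper.

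Then I would verify that $\tilde c$ is a lambda colouring of $G'$. For any $u, v \in V(G')$,
\begin{equation*}
|\tilde c(u) - \tilde c(v)| + \di[G']{u,v} = |c(u) - c(v)| + \di[G']{u,v} \geq |c(u) - c(v)| + \di[G]{u,v} \geq 3,
\end{equation*}
where the first inequality uses distance monotonicity and the second uses that $c$ is a lambda colouring of $G$. Since $\tilde c(u) = c(u) - m \leq t - 0 = t$ for every $u \in V(G')$, the colouring $\tilde c$ witnesses a lambda colouring of $G'$ whose maximum assigned value is at most $t$. By the definition of lambda chromatic number as an infimum over lambda colourings, this gives $t' \leq t$.

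The only mildly delicate point is remembering that the definition forces the minimum assigned value to be $0$, which is why the shift by $m$ is inserted; without it, the restriction of $c$ would still satisfy the two separation conditions but might fail the normalisation. Beyond that, the argument is one line of distance monotonicity plus the definition of lambda colouring, so I do not anticipate any genuine obstacle.
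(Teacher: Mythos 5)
Your proposal is correct and follows essentially the same route as the paper: restrict a lambda colouring of $G$ to $V(G')$ and use the distance monotonicity $\di[G]{u,v}\leq\di[G']{u,v}$ to see the restriction remains a lambda colouring with maximum value at most $t$. The only cosmetic difference is your shift by $m$ to restore the normalisation $\min\tilde c=0$, which the paper omits since that normalisation is only a without-loss-of-generality convention.
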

\begin{proof}
Let $\lambda:V(G)\rightarrow\mathbb{N}$ be a lambda colouring of $G$ and $\lambda':=\lambda_{/V(G')}$. Suppose 
$u,v\in V(G')$, then $\di[G']{u,v}\geq \di[G]{u,v}$. Consequently, 
$|\lambda'(u)-\lambda'(v)|+\di[G']{u,v}\geq|\lambda(u)-\lambda(v)|+\di[G]{u,v}$, which means $\lambda':V(G')\rightarrow\mathbb{N}$ is a lambda colouring of $G'$. Therefore
\begin{equation*}
t'=\min\left\{\underset{u\in V(G')}{\max}c(u): c\textup{ is a lambda colouring of } G'\right\}
\leq\underset{u\in V(G')}{\max}\lambda'(u)\leq\underset{u\in V(G)}{\max}\lambda(u)
\end{equation*}
The result follows, since the right hand side of the above inequality is true for any lambda colouring $\lambda$ of $G$.
\end{proof}

\begin{definition}
Let $G$ be a graph with lambda chromatic number $t$ and $C_{0},\ldots,C_{t}$ be the coloured partition of the vertex set $V(G)$ with respect to the lambda colouring $c$. The graph with vertex set $V(G)$ and edges of the form $\{u^{m}_{i},u^{p}_{i}\}$, where $1\leq i\leq\min\{|C_{m}|,|C_{p}|\}$, $C_{m}$ \& $C_{p}$ are non-empty and $0\leq m\leq p-2\leq t-2$, is called the \emph{edge standardised} graph of $G$ with respect to the lambda colour $c$ and denoted as $\mathscr{S}_{c}[G]$. Such edge standardised graph of $G$ contains $\M{C_{0},\ldots,C_{t}}$ number of edges.  
\end{definition}

Under the edge standardisation, the vertex set and its coloured partition remain invariant. An edge standardised graph $G$ with lambda chromatic number $t$ is a disjoint union of graphs of the form $\mathbb{T}_{t}$ (defined below). The following proposition ensures us that this technique does not reduce the number of edges.

\begin{proposition}\label{es_base}
Let $G$ be a graph with lambda chromatic number $t\geq3$ and $C_{0},\ldots,C_{t}$ be the coloured partition  of the vertex set of $G$ with respect to an optimal lambda colouring $c$.
Then such $c$ is also an optimal lambda colouring of $\mathscr{S}_{c}[G]$. Moreover, $|E(\mathscr{S}_{c}[G])|\geq|E(G)|$ and $\nabla_{c}(\mathscr{S}_{c}[G])=\nabla_{c}(G)$.  
\end{proposition}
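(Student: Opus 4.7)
The verification splits naturally into four claims, three of which are short. The equality $\nabla_{c}(\mathscr{S}_{c}[G])=\nabla_{c}(G)$ is immediate from the definition of edge standardisation: $\mathscr{S}_{c}[G]$ carries the same vertex set and the same coloured partition $C_{0},\ldots,C_{t}$ as $G$, so the quantities $\underset{j}{\max}|C_{j}|$ and $\underset{j}{\min}|C_{j}|$ are unchanged. The inequality $|E(\mathscr{S}_{c}[G])|\geq|E(G)|$ follows by combining Theorem~\ref{notation_M}(a), which gives $|E(G)|\leq\M{C_{0},\ldots,C_{t}}$, with the fact, built into the definition of $\mathscr{S}_{c}[G]$, that $|E(\mathscr{S}_{c}[G])|=\M{C_{0},\ldots,C_{t}}$.

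That $c$ is itself a lambda colouring of $\mathscr{S}_{c}[G]$ is a direct check. Every edge of $\mathscr{S}_{c}[G]$ has the form $\{u^{m}_{i},u^{p}_{i}\}$ with $|m-p|\geq 2$, giving $|c(u^{m}_{i})-c(u^{p}_{i})|=|m-p|\geq 2$. Because every edge preserves the lower index $i$, any two vertices of $\mathscr{S}_{c}[G]$ with a common neighbour must share the same lower index and therefore carry distinct upper indices; hence distance-$2$ pairs in $\mathscr{S}_{c}[G]$ receive distinct values under $c$. This already establishes that the lambda chromatic number of $\mathscr{S}_{c}[G]$ is at most $t$, which is one half of the optimality claim.

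The main content is the reverse inequality: the lambda chromatic number of $\mathscr{S}_{c}[G]$ must be at least $t$. I plan to take an arbitrary lambda colouring $c'$ of $\mathscr{S}_{c}[G]$ and build from it a lambda colouring of $G$ with maximum value at most $\max c'$. Define $c'':V(G)\to\mathbb{N}$ by $c''(u)=c'(u^{c(u)}_{1})$, so that every vertex of the colour class $C_{m}$ is assigned the value that $c'$ gives to the distinguished vertex $u^{m}_{1}\in C_{m}$. For any edge $\{u,v\}\in E(G)$ with $c(u)=m$, $c(v)=p$, the relation $|m-p|\geq 2$ forces $\{u^{m}_{1},u^{p}_{1}\}$ to be an edge of $\mathscr{S}_{c}[G]$, giving $|c''(u)-c''(v)|\geq 2$. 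For a pair $u,v\in V(G)$ with $\di[G]{u,v}=2$, choose a common neighbour $w\in C_{q}$ of $u$ and $v$ in $G$; since $c$ is a lambda colouring of $G$, $|q-c(u)|,|q-c(v)|\geq 2$, so in $\mathscr{S}_{c}[G]$ the vertex $u^{q}_{1}$ is adjacent to both $u^{c(u)}_{1}$ and $u^{c(v)}_{1}$, placing them at distance at most $2$ in $\mathscr{S}_{c}[G]$ and forcing $c''(u)\neq c''(v)$. After normalising $c''$ to have minimum $0$, we obtain a lambda colouring of $G$ whose maximum is at most $\max c'$, so $t\leq\max c'$; taking $c'$ optimal gives the required lower bound and hence the optimality of $c$ for $\mathscr{S}_{c}[G]$.

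The delicate step is the distance-$2$ case above, where one must locate a witness $u^{q}_{1}$ that is simultaneously adjacent to $u^{c(u)}_{1}$ and $u^{c(v)}_{1}$ in $\mathscr{S}_{c}[G]$. The existence of the common neighbour $w\in C_{q}$ guarantees that $C_{q}$ is non-empty, so $u^{q}_{1}$ really exists; and since the lower indices of $u^{q}_{1}$, $u^{c(u)}_{1}$, $u^{c(v)}_{1}$ are all equal to $1$, the two edges $\{u^{q}_{1},u^{c(u)}_{1}\}$ and $\{u^{q}_{1},u^{c(v)}_{1}\}$ are both present in $\mathscr{S}_{c}[G]$ by its very definition. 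Once this observation is in place the rest of the argument is straightforward book-keeping.
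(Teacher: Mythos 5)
Your proof is correct, and the two easy parts (the edge inequality via Theorem~\ref{notation_M}(a) and the invariance of $\nabla_{c}$) match the paper. Where you genuinely diverge is in the lower bound for the lambda chromatic number of $\mathscr{S}_{c}[G]$. The paper exhibits the induced subgraph $\mathbb{T}_{t}$ on the vertices $u^{m}_{1}$ with $C_{m}\neq\emptyset$, proves separately (in the claim inside Proposition~\ref{base}, which Proposition~\ref{es_base} cites forward) that $\mathbb{T}_{t}$ has lambda chromatic number exactly $t$ --- an argument that needs the diameter-two structure of $\mathbb{T}_{t}$ and the fact that an optimal colouring has no two consecutive holes --- and then invokes Proposition~\ref{subgraph_relation}. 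You instead pull back an arbitrary lambda colouring $c'$ of $\mathscr{S}_{c}[G]$ to the colouring $c''(u)=c'(u^{c(u)}_{1})$ of $G$ and verify the two distance conditions directly, reducing the lower bound to the hypothesis that $G$ itself has lambda chromatic number $t$. This buys you a self-contained proof with no forward reference and no appeal to the no-consecutive-holes property, at the cost of the (routine but necessary) verification that $c''$ is well defined on colour classes. One small point you leave implicit: in the distance-two case you need $c(u)\neq c(v)$ so that $u^{c(u)}_{1}$ and $u^{c(v)}_{1}$ are actually distinct vertices (otherwise $c''(u)=c''(v)$ trivially and the argument would collapse); this is immediate because $c$ is a lambda colouring of $G$ and $\textnormal{d}_{G}(u,v)=2$ forces $|c(u)-c(v)|\geq1$, but it is worth stating, since the same observation is what makes $c''$, which is constant on each colour class, a legitimate lambda colouring at all. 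With that sentence added, your argument is complete and arguably cleaner than the paper's.
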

\begin{proof}
As a part of the argument of the proof of Proposition~\ref{base}, we have showed that $\mathscr{S}_{c}[G]$ contains a subgraph $\mathbb{T}_{t}$ and the lambda chromatic number of $\mathbb{T}_{t}$ is $t$. Thus by using Proposition~\ref{subgraph_relation}, we conclude that the lambda chromatic number of $\mathscr{S}_{c}[G]$ is at least $t$. To get an upper bound, we associate  for each $u^{p}_{i}\mapsto p$, where $0\leq p\leq t$ and $1\leq i\leq |C_{p}|$. We call this map as 
$\Theta:V(\mathscr{S}_{c}[G])\rightarrow\mathbb{N}$ and arguing similarly as in a part of the proof of Proposition~\ref{base}, we conclude that $\Theta$ is a lambda colouring of the graph $\mathscr{S}_{c}[G]$. We note that 
$\underset{u\in V(\mathscr{S}_{c}[G])}{\max}\Theta(u)=t$. Hence the lambda chromatic number of such graph is at most $t$. It implies the lambda chromatic number of $\mathscr{S}_{c}[G]$ is $t$. It also implies that $\Theta$ is an optimal lambda colouring and for each $u\in V(G)$, $\Theta(u)=c(u)$. Hence $c$ is an optimal lambda colouring of $\mathscr{S}_{c}[G]$ and $\nabla_{c}(\mathscr{S}_{c}[G])=\nabla_{c}(G)$.

We note that for $0\leq p\leq q-2\leq t-2$, $\ed[G]{C_{p},C_{q}}\leq\min\{|C_{p}|,|C_{q}|\}$ and for $0\leq p\leq t-1$
$\ed[G]{C_{p},C_{p+1}}=0$. Since $\mathscr{S}_{c}[G]$ has exactly $\M{C_{0},\ldots,C_{t}}$ number of edges, therefore by Theorem~\ref{notation_M}, we have for $0\leq p\leq q-2\leq t-2$, 
$\ed[{\mathscr{S}_{c}[G]}]{C_{p},C_{q}}=\min\{|C_{p}|,|C_{q}|\}$ and for $0\leq p\leq t-1$, 
$\ed[{\mathscr{S}_{c}[G]}]{C_{p},C_{p+1}}=0$. Hence $|E(\mathscr{S}_{c}[G])|\geq|E(G)|$.
\end{proof}

\begin{definition}
Let $G$ be an edge standardised graph and with respect to the underlying lambda colouring $c$, let 
$C_{M}\in\mathfrak{M}_{c}(G)$ and $C_{m}\in\mathfrak{m}_{c}(G)$. The subgraph obtained by deleting the vertex 
$u^{M}_{|C_{M}|}$ and all the edges through that vertex of the graph $G$, is called as the \emph{edge deleted} graph. 
We denote such graph as $\mathscr{D}_{C_{M}}[G]$. Similarly, the graph obtained by adding the vertex 
$u^{m}_{|C_{m}|+1}$ and all possible edges of the form $\{u^{m}_{|C_{m}|+1}, u^{p}_{|C_{m}|+1}\}$, where 
\begin{align*}
\left\{\begin{array}{lcl}
        p\in\{2,\ldots,t\}&\textup{ if } & m=0\\
        p\in\{0,\ldots,t-2\}&\textup{ if } & m=t\\
        p\in\{0,\ldots,t\}\smallsetminus\{m-1,m,m+1\}&\textup{ if }& 1\leq m\leq t-1
       \end{array}\right.
\end{align*}
and $|C_{p}|\geq|C_{m}|+1$, with the graph $G$, is called as the \emph{edge inserted} graph. We denote such graph as $\mathscr{I}_{C_{m}}[G]$.
\end{definition}

\begin{proposition}\label{base}
Let $G$ be an edge standardised graph with lambda chromatic number $t\geq3$ and $C_{0},\ldots,C_{t}$ be the coloured partition  of the vertex set of $G$ with respect to the underlying optimal lambda colouring $c$. Then the lambda chromatic number of $\mathscr{I}_{C_{m}}[G]$ is $t$. If deletion of the vertex $u^{M}_{|C_{M}|}$ from the graph $G$ does not produce two consecutive holes in the graph $\mathscr{D}_{C_{M}}[G]$, then the lambda chromatic number of $\mathscr{D}_{C_{M}}[G]$ is $t$.
\end{proposition}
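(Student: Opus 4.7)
My plan is to establish the two assertions independently, each time pairing one bound obtained from Proposition~\ref{subgraph_relation} with a bespoke argument for the other direction.

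For the insertion assertion, $G$ sits as a subgraph of $\mathscr{I}_{C_m}[G]$, so Proposition~\ref{subgraph_relation} immediately yields that the lambda chromatic number of $\mathscr{I}_{C_m}[G]$ is at least $t$. For the reverse inequality I would extend the given optimal lambda colouring $c$ of $G$ to $\mathscr{I}_{C_m}[G]$ by assigning the single new vertex $u^{m}_{|C_{m}|+1}$ the colour $m$. The edge-standardised structure of $G$ forces every edge to stay within a single index, so the neighbourhood of $u^{m}_{|C_{m}|+1}$ in $\mathscr{I}_{C_m}[G]$ consists precisely of the prescribed vertices $u^{p}_{|C_{m}|+1}$ with $|p-m|\geq 2$ and $|C_{p}|\geq|C_{m}|+1$, while any vertex at distance two from $u^{m}_{|C_{m}|+1}$ must again be of the form $u^{r}_{|C_{m}|+1}$ with $r\neq m$. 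The colour $m$ then differs by at least $2$ from the colour $p$ of every such neighbour and differs from the colour $r$ of every such distance-two vertex, so the extended map is a valid lambda colouring of $\mathscr{I}_{C_m}[G]$ whose maximum is $t$.

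For the deletion assertion, $\mathscr{D}_{C_{M}}[G]$ is a subgraph of $G$, so Proposition~\ref{subgraph_relation} gives that the lambda chromatic number of $\mathscr{D}_{C_{M}}[G]$ is at most $t$. The nontrivial direction is the reverse. I would argue by contradiction: suppose $c'$ is a lambda colouring of $\mathscr{D}_{C_{M}}[G]$ with maximum value strictly less than $t$, and try to extend $c'$ to a lambda colouring of $G$ by assigning some admissible colour $\alpha\in\{0,\ldots,t-1\}$ to $u^{M}_{|C_{M}|}$. Such an extension would contradict the hypothesis that the lambda chromatic number of $G$ equals $t$. By the edge-standardised structure, every vertex at distance at most two from $u^{M}_{|C_{M}|}$ in $G$ sits at the single index $|C_{M}|$, so the constraints on $\alpha$ depend only on the colours that $c'$ places on these finitely many index-$|C_{M}|$ vertices. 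The hypothesis that the deletion does not create two consecutive holes is exactly what keeps the forbidden-colour set for $\alpha$ from exhausting $\{0,\ldots,t-1\}$, and I would use it to locate an admissible $\alpha$.

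The hardest step I anticipate is this last bookkeeping in the deletion case: counting the colours $\alpha$ must avoid and arguing that the no-two-consecutive-holes hypothesis leaves at least one admissible value. I would expect a case split on whether $M$ is an endpoint ($M\in\{0,t\}$) or an interior index, and on which of the classes neighbouring $C_{M}$ in the integer ordering are empty. The hole hypothesis should precisely rule out the configurations in which every candidate $\alpha$ fails one of the constraints coming from the adjacent or distance-two vertices in index $|C_{M}|$.
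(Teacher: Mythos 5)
Your treatment of the insertion graph $\mathscr{I}_{C_m}[G]$ is correct and is essentially the paper's argument: the lower bound comes from Proposition~\ref{subgraph_relation}, and the upper bound from extending the colouring by $u^{m}_{|C_m|+1}\mapsto m$, using the fact that edges of an edge standardised graph preserve the subscript, so everything within distance two of the new vertex lives at index $|C_m|+1$ and carries a colour that is either at distance at least two (neighbours) or merely different (distance-two vertices).

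The deletion half, however, has a genuine gap, and it sits exactly in the step you flag as ``bookkeeping''. Your plan is: assume $\mathscr{D}_{C_M}[G]$ admits a lambda colouring $c'$ with maximum at most $t-1$ and extend $c'$ to $G$ by choosing a colour $\alpha$ for $u^{M}_{|C_M|}$, contradicting the fact that the lambda chromatic number of $G$ is $t$. Two things break. First, re-inserting $u^{M}_{|C_M|}$ does not only impose constraints on $\alpha$: it shortens distances between surviving vertices. Two neighbours $u^{p}_{|C_M|}$ and $u^{p+1}_{|C_M|}$ of the deleted vertex are at distance two in $G$ via $u^{M}_{|C_M|}$, but in $\mathscr{D}_{C_M}[G]$ they may have no common neighbour at all (any common neighbour must be some $u^{r}_{|C_M|}$ with $C_r\in\mathfrak{M}_{c}(G)$ and $r$ at distance at least two from both $p$ and $p+1$, and $r=M$ may be the only such index). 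Then $c'$ is free to give them equal colours, no choice of $\alpha$ repairs the extension, and your contradiction is never reached. Second, even for the constraints that do concern $\alpha$: each of the up to $t-2$ neighbours of $u^{M}_{|C_M|}$ forbids an interval of three consecutive values, so the forbidden values can easily cover all of $\{0,\ldots,t-1\}$; and the no-two-consecutive-holes hypothesis constrains the colour classes of the \emph{original} colouring $c$, not the arbitrary colouring $c'$, so it cannot play the role you assign to it. (Recall that deleting a vertex genuinely can drop the lambda chromatic number --- delete the centre of a star --- so some structural input specific to this situation is unavoidable.) The paper's route is different: it exhibits the induced subgraph $\mathbb{T}_t$ on the vertices $u^{i}_{1}$ of the non-empty classes, shows that $\mathbb{T}_t$ has diameter at most two and therefore forces lambda chromatic number at least $t$ (this is where the absence of two consecutive holes is used), and observes that $\mathbb{T}_t$, or its analogue when $|C_M|=1$, survives the deletion of $u^{M}_{|C_M|}$. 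You should replace the extension argument by an explicit-subgraph argument of this kind.
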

Note that by deletion of the vertex $u^{M}_{|C_{M}|}$ from the graph $G$, produces at least two consecutive holes in the graph $\mathscr{D}_{C_{M}}[G]$, we mean $|C_{M}|=1$ and either $C_{M+1}$ is empty or $C_{M-1}$ is empty.
\begin{proof}
We consider the subgraph induced by the set of vertices 
$\{u^{m}_{1}:0\leq m\leq t, C_{m}\neq\emptyset\}$ of the graph $G$. We refer such subgraph as $\mathbb{T}_{t}$. Since $C_{0}$ \& $C_{t}$ are non-empty sets and $t\geq3$, therefore $\{u^{0}_{1},u^{t}_{1}\}$ is an edge of $\mathbb{T}_{t}$. 

\noindent{\textsl{Claim} :} The lambda chromatic number of the graph $\mathbb{T}_{t}$ is $t$.
\begin{proof}[\tt{Proof of claim} :]\renewcommand{\qedsymbol}{}
We can directly verify the claim is true for $t=3$. So without loss of generality we assume $t\geq4$.
Clearly if $u^{p}_{1}$ and $u^{q}_{1}\in V(\mathbb{T}_{t})$, where $0\leq p\leq q-2\leq t-2$, then $\{u^{p}_{1},u^{q}_{1}\}$ is an edge of $\mathbb{T}_{t}$. Also if $u^{p}_{1}$ and $u^{p+1}_{1}\in V(\mathbb{T}_{t})$, where $0\leq p\leq t-1$, then there exists an $u^{q}_{1}\in V(\mathbb{T}_{t})$, where 
\begin{align*}
\left\{\begin{array}{lcl}
        q=0&\textup{ if } & 2\leq p\leq t-1\\
        q=t&\textup{ if } & 0\leq p\leq t-3,
       \end{array}\right.
\end{align*}
such that $\{u^{p}_{1},u^{q}_{1}\}$ and $\{u^{p+1}_{1},u^{q}_{1}\}$ are edges of $\mathbb{T}_{t}$. This implies for all 
$u,v\in V(\mathbb{T}_{t})$, $1\leq\di[\mathbb{T}_{t}]{u,v}\leq2$. Let $\lambda:V(\mathbb{T}_{t})\rightarrow\mathbb{N}$ be a lambda colouring. Then for all $u,v\in V(\mathbb{T}_{t})$, $\lambda(u)\neq\lambda(v)$. Note that $C_{m}=\emptyset$ for some $m$, with $1\leq m\leq t-1$, corresponds to a hole of the colouring $c$. Since $c$ is optimal lambda colouring, there does not exist two consecutive holes. Therefore if $C_{m}=\emptyset$ for some $m$, with $1\leq m\leq t-1$, then $\{u^{m-1}_{1},u^{m+1}_{1}\}$ is an edge and consequently $|\lambda(u^{m-1}_{1})-\lambda(u^{m+1}_{1})|\geq2$. This implies
\begin{equation*}
\max\left\{\lambda(u):u\in\{u^{t}_{1}\}\sqcup\N[u^{t}_{1}]\right\}\geq\max\{0,\ldots,t\}=t,
\end{equation*}
where $\N[u^{t}_{1}]:=\left\{u^{p}_{1}:0\leq p\leq t-2, \{u^{p}_{1},u^{t}_{1}\}\in E(\mathbb{T}_{t})\right\}$ denotes the non-empty set of neighbours of $u^{t}_{1}$. Hence for each lambda colouring $\mu:V(\mathbb{T}_{t})\rightarrow\mathbb{N}$ we have 
$\underset{u\in V(\mathbb{T}_{t})}{\max}\mu(u)\geq t$. Therefore the lambda chromatic number of $\mathbb{T}_{t}$ is at least $t$. Since $\mathbb{T}_{t}$ is a subgraph of $G$ and the lambda chromatic number of $G$ is $t$. Therefore by using Proposition~\ref{subgraph_relation} we have the lambda chromatic number of $\mathbb{T}_{t}$ is at most $t$. Hence the claim is established.
\end{proof}

Note that, without loss of generality, the graph $\mathscr{D}_{C_{M}}[G]$ contains $\mathbb{T}_{t}$ as its 
induced subgraph.Now deletion of the vertex $u^{M}_{|C_{M}|}$ from the graph $G$ does not produce two 
consecutive holes in the graph $\mathscr{D}_{C_{M}}[G]$. By using similar arguments as in the above claim and Proposition~\ref{subgraph_relation}, we have the lambda chromatic number of such graph is at least $t$. The graph $\mathscr{D}_{C_{M}}[G]$ is a subgraph of $G$ and the lambda chromatic number of $G$ is $t$. Hence again by using Proposition~\ref{subgraph_relation}, the lambda chromatic number of $\mathscr{D}_{C_{M}}[G]$ is at most $t$, which concludes the lambda chromatic number of $\mathscr{D}_{C_{M}}[G]$ is $t$. 

Since $G$ is a subgraph of the graph $\mathscr{I}_{C_{m}}[G]$, by using Proposition~\ref{subgraph_relation} we conclude that the lambda chromatic number of such graph is at least $t$. We associate $u^{m}_{|C_{m}|+1}\mapsto m$ and for each $u^{p}_{i}\mapsto p$, where $0\leq p\leq t$ and $1\leq i\leq|C_{p}|$. We call this map as $\Lambda:V(\mathscr{I}_{C_{m}}[G])\rightarrow\mathbb{N}$ and claim the following.

\noindent{\textsl{Claim} :} The map $\Lambda$ is a lambda colouring of $\mathscr{I}_{C_{m}}[G]$.

\begin{proof}[\tt{Proof of claim} :]\renewcommand{\qedsymbol}{}
Let $u^{p}_{i}$ and $u^{q}_{j}$ be two distinct vertices of $G':=\mathscr{I}_{C_{m}}[G]$, where $0\leq p\leq t$, $0\leq q\leq t$, $i\geq 1$ and $j\geq1$. We note that if $i\neq j$, then $\di[G']{u^{p}_{i},u^{q}_{j}}\neq 0,1,2$. Also if $i=j$, then
\begin{align*}
\di[G']{u^{p}_{i},u^{q}_{i}}=\left\{\begin{array}{lcl}
              \left\{\begin{array}{lcl}
                2& \textnormal{if} & p=0, q=1 \\
                1& \textnormal{if} & p=0, 2\leq q\leq t
            \end{array}\right.\\
            \left\{\begin{array}{lcl}
                2& \textnormal{if} & p=t, q=t-1 \\
                1& \textnormal{if} & p=t, 0\leq q\leq t-2
            \end{array}\right.\\
            \left\{\begin{array}{lcl}
		3& \textnormal{if} & p=1,q=2 \textnormal{ and } t=3\\
                2& \textnormal{if} & 1\leq p\leq t-1, q=p-1 \textnormal{ or } p+1\textnormal{ and } t\geq4\\
                1& \textnormal{if} & 1\leq p\leq t-1, 0\leq q\leq p-2 \textnormal{ or } p+2\leq q\leq t
            \end{array}\right.\\
                       \end{array}\right.
\end{align*}
Hence for two distinct vertices $u$ and $v$ of $G'$, we have $|\Lambda(u)-\Lambda(v)|+\di[G']{u,v}\geq3$, which establishes the claim. 
\end{proof}
\noindent We note that $\underset{u\in V(\mathscr{I}_{C_{m}}[G])}{\max}\Lambda(u)=t$. Hence from the above claim the lambda chromatic number of such graph is at most $t$, which concludes the lambda chromatic number of $\mathscr{I}_{C_{m}}[G]$ is $t$. 
\end{proof}

An alternative proof of the first claim of Proposition~\ref{base} follows easily by using Theorem~1.1 from \cite{MR1310873}. A \emph{path} of length $k$, where $k$ is a positive integer, in a graph $G$ is a sequence $\{u_{i}\}_{i=1}^{k+1}$ of distinct vertices such that for $1\leq i\leq k$, $\{u_{i},u_{i+1}\}$ is an edge of $G$. 
The vertices $u_{1}$ and $u_{k+1}$ is called the \emph{initial} and \emph{terminal} vertices, respectively. A \emph{path covering} of $G$, denoted as $\mathscr{C}(G)$, is a collection of vertex disjoint paths in $G$ such that for each vertex $u\in V(G)$ there exists a (unique) $C\in\mathscr{C}(G)$ such that $u\in C$. A \emph{minimum path covering} of $G$ is a path covering of $G$ with minimum cardinality and the \emph{path covering number} 
$\tau_{p}(G)$ of $G$ is the cardinality of a minimum path covering of $G$. The Theorem~1.1 of \cite{MR1310873}, states that the path covering number of the complement graph $\overline{G}$ of an $n-$vertex graph $G$ is 
$\tau_{p}(\overline{G})$. Then one of the following holds.
\begin{itemize}
\item $\tau_{p}(\overline{G})=1$ if and only if the lambda chromatic number of $G$ is less or equals to $n-1$.
\item $\tau_{p}(\overline{G})\geq2$ if and only if the lambda chromatic number of $G$ is $n+\tau_{p}(\overline{G})-2$.
\end{itemize}

\begin{proof}[\tt{Alternative proof of the first claim of Proposition~\ref{base}} :]\renewcommand{\qedsymbol}{}
If $|V(\mathbb{T}_{t})|=t+1$, then for each integer $m$, with $0\leq m\leq t$, $C_{m}$ is a non-empty set. Hence the graph $\mathbb{T}_{t}$ is isomorphic to the graph $\mathbb{G}_{t}$. From Theorem~\ref{lambda_G_n}, we conclude that the lambda chromatic number of $\mathbb{T}_{t}$ is $t$. Hence we are done this case. Now if $|V(\mathbb{T}_{t})|=t+1-r$ for some positive integer $r$, then among the $t+1$ coloured classes $C_{0},\ldots,C_{t}$ exactly $r$ coloured classes are empty. We note that $C_{0}$ and $C_{t}$ can never be empty. Since $c$ is optimal lambda colouring of $G$, we have if $C_{m}$ is empty for some integer $m$, with $1\leq m\leq t-1$, then both $C_{m-1}$ and $C_{m+1}$ are non-empty. These imply the complement graph $\overline{\mathbb{T}_{t}}$ of $\mathbb{T}_{t}$ is an union of $r+1$ vertex disjoint paths (path graphs) $P_{0},\ldots,P_{r}$. 
We note that such paths $P_{0},\ldots,P_{t}$ form a path covering of $\overline{\mathbb{T}_{t}}$. Hence $\tau_{p}(\overline{\mathbb{T}_{t}})\leq r+1$. Again we note that for any two paths $P$ and $Q$ from $P_{0},\ldots,P_{r}$,  there does not exist any edge of the form $\{u,v\}$, where $u,v$ are vertices of the paths $P$ and $Q$ respectively. With such property, it implies that $P_{0},\ldots,P_{r}$ is the only path covering of $\overline{\mathbb{T}_{t}}$ with cardinality less or equals to $r+1$. Hence $\tau_{p}(\overline{\mathbb{T}_{t}})\geq r+1$. Consequently, $\tau_{p}(\overline{\mathbb{T}_{t}})=r+1$. Now by using Theorem~1.1 of \cite{MR1310873}, we have the lambda chromatic number of $\mathbb{T}_{t}$ is $t+1-r+(r+1)-2=t$.
\end{proof}

\begin{remark}
It is obvious that an optimal lambda colouring $c:V(G)\rightarrow\{0,\ldots,t\}$ induces the coloured partition 
$C_{0},\ldots,C_{t}$ of the vertex set of a graph $G$. In addition, if $G$ is an edge standardised graph, than both the graphs $\mathscr{D}_{C_{M}}[G]$ and $\mathscr{I}_{C_{m}}[G]$, where $C_{M}\in\mathfrak{M}_{c}(G)$ and $C_{m}\in\mathfrak{m}_{c}(G)$, are edge standardised graphs. Also both have lambda chromatic number $t$. Therefore it is completely legitimate to study about the edge standardised graph $\mathscr{I}_{C_{m}}\mathscr{D}_{C_{M}}[G]$ ($:=\mathscr{I}_{C_{m}}[\mathscr{D}_{C_{M}}[G]]$). Such graph is obtained after deletion of the vertex $u^{M}_{|C_{M}|}$ along with all the edges through that vertex of the edge standardised graph $G$ and then inserting the vertex $u^{m}_{|C_{m}|+1}$ as well as all possible edges of the form $\{u^{m}_{|C_{m}|+1}, u^{p}_{|C_{m}|+1}\}$, where 
\begin{align*}
\left\{\begin{array}{lcl}
        p\in\{2,\ldots,t\}&\textup{ if } & m=0\\
        p\in\{0,\ldots,t-2\}&\textup{ if } & m=t\\
        p\in\{0,\ldots,t\}\smallsetminus\{m-1,m,m+1\}&\textup{ if }& 1\leq m\leq t-1
       \end{array}\right.
\end{align*}
and $|C_{p}|\geq|C_{m}|+1$.

We start with the coloured partition $C_{0},\ldots,C_{m},\ldots,C_{M},\ldots,C_{t}$ of $G$. Such coloured partition 
has changed to $C_{0},\ldots,C_{m},\ldots,C_{M}\smallsetminus\{u^{M}_{|C_{M}|}\},\ldots,C_{t}$
in the graph $\mathscr{D}_{C_{M}}[G]$ and such coloured partition has changed to
$C_{0},\ldots,C_{m}\sqcup\{u^{m}_{|C_{m}|+1}\},\ldots,C_{M},\ldots,C_{t}$ in the graph $\mathscr{I}_{C_{m}}[G]$. Therefore using Proposition~\ref{base} repeatedly, we say that during the edge deletion and insertion procedure, the coloured partition $C_{0},\ldots,C_{m},\ldots,C_{M},\ldots,C_{t}$ of $G$ has changed in the graph $\mathscr{I}_{C_{m}}\mathscr{D}_{C_{M}}[G]$, which is 
\begin{equation*}
C_{0},\ldots,C_{m}\sqcup\{u^{m}_{|C_{m}|+1}\},\ldots,C_{M}\smallsetminus\{u^{M}_{|C_{M}|}\},\ldots,C_{t},
\end{equation*}
as long as deleting $u^{M}_{|C_{M}|}$ does not produce two consecutive empty classes (i.e. holes) in $\mathscr{D}_{C_{M}}[G]$. For the sake of simplicity we refer each of the lambda colouring of 
$\mathscr{D}_{C_{M}}[G]$, $\mathscr{I}_{C_{m}}[G]$ and $\mathscr{I}_{C_{m}}\mathscr{D}_{C_{M}}[G]$ inducing the aforementioned vertex partition as $c$. Though the the lambda chromatic number remains invariant during the edge deletion and insertion procedure, but $\mathfrak{M}_{c}(G)$ and $\mathfrak{m}_{c}(G)$ change to 
$\mathfrak{M}_{c}(\mathscr{D}_{C_{M}}[G])$ and $\mathfrak{m}_{c}(\mathscr{I}_{C_{m}}[G])$ respectively.
\end{remark}

\begin{definition}
Let $G$ is a graph with lambda chromatic number $t$ and an optimal lambda colouring $c:V(G)\rightarrow\{0,\ldots,t\}$ induce the coloured partition $C_{0},\ldots,C_{t}$ of the vertex set of $G$. Then 
\begin{align*}
\mathsf{P}_{G}(C_{m})=\left\{\begin{array}{lcl}
                         \{C_{1}\} & \textnormal{if} & m=0\\
                         \{C_{t-1}\} & \textnormal{if} & m=t\\ 
                         \{C_{m-1},C_{m+1}\}& \textnormal{if} & 1\leq m\leq t-1
                         \end{array}\right.
\end{align*}
denotes the \emph{prohibited zone} of the colour class $C_{m}$ in the graph $G$. We use the term 
``prohibited'' is due to the property that for each $x\in C\in\mathsf{P}_{G}(C_{m})$ there does not 
exist $y\in C_{m}$ such that $\{x,y\}$ is an edge of $G$.
\end{definition}

The solution of Question~(b) relies on the following three lemmas. 

\begin{lemma}\label{maxformula}
Let $G$ be an edge standardised graph with lambda chromatic number $t\geq3$ and $C_{0},\ldots,C_{t}$ be the coloured partition  of the vertex set of $G$ with respect to the underlying optimal lambda colouring $c$ and $\nabla_{c}(G)\geq2$. Then for $C_{M}\in\mathfrak{M}_{c}(G)$ the lambda chromatic number of $\mathscr{D}_{C_{M}}[G]$ is $t$ and the following formula holds: 
\begin{equation*}
|E(G)|=|E(\mathscr{D}_{C_{M}}[G])|+|\mathfrak{M}_{c}(G)|-1-|\mathfrak{M}_{c}(G)\cap\mathsf{P}_{G}(C_{M})|.
\end{equation*}
\end{lemma}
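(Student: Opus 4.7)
The plan is to decompose the statement into two independent observations: first that the chromatic number is preserved under the deletion, and second that the edge count reduces in a controlled way because $G$ is edge standardised.

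For the first part, I would invoke Proposition~\ref{base}, which says that $\mathscr{D}_{C_M}[G]$ retains lambda chromatic number $t$ provided the deletion does not create two consecutive holes. Since $C_M \in \mathfrak{M}_c(G)$ and $\nabla_c(G) \geq 2$, we have $|C_M| \geq \min_j |C_j| + 2 \geq 2$. Removing a single vertex from $C_M$ therefore leaves $|C_M| - 1 \geq 1$ vertices in the $M$th colour class, so no new hole is produced at all. The hypothesis of Proposition~\ref{base} is thus vacuously satisfied, and the lambda chromatic number of $\mathscr{D}_{C_M}[G]$ equals $t$.

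For the edge-count identity, the idea is simply to identify the degree of the deleted vertex $u^M_{|C_M|}$ in $G$. Because $G$ is edge standardised, the edges between $C_M$ and any other class $C_p$ (with $|M-p|\geq 2$) are precisely $\{u^M_i, u^p_i\}$ for $1 \leq i \leq \min\{|C_M|, |C_p|\}$. Hence $u^M_{|C_M|}$ has a neighbour in $C_p$ if and only if $|C_p| \geq |C_M|$. Since $C_M$ attains the maximum class size, this forces $|C_p| = |C_M|$, i.e.\ $C_p \in \mathfrak{M}_c(G)$. Moreover, the class $C_p$ must avoid the prohibited zone of $C_M$ (so $|M-p|\geq 2$, equivalently $C_p \notin \mathsf{P}_G(C_M)$) and $p \neq M$.

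Counting the admissible indices $p$ gives exactly $|\mathfrak{M}_c(G)| - 1 - |\mathfrak{M}_c(G)\cap \mathsf{P}_G(C_M)|$: we start from $\mathfrak{M}_c(G)$, subtract the class $C_M$ itself, and further subtract those maximum classes that are forbidden neighbours. Combining this with $|E(G)| = |E(\mathscr{D}_{C_M}[G])| + \deg_G(u^M_{|C_M|})$ yields the claimed formula. No step appears to be a genuine obstacle; the whole argument is a careful bookkeeping on the standardised edges plus one invocation of Proposition~\ref{base}, and the only subtle ingredient is noticing that $\nabla_c(G)\geq 2$ is precisely what prevents the creation of a hole when $u^M_{|C_M|}$ is removed.
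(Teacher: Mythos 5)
Your proposal is correct and follows essentially the same route as the paper: invoke Proposition~\ref{base} after noting that $\nabla_{c}(G)\geq2$ forces $|C_{M}|\geq2$ (so no new hole appears), and then compute the degree of the deleted vertex $u^{M}_{|C_{M}|}$, which in an edge standardised graph is exactly the number of classes in $\mathfrak{M}_{c}(G)$ other than $C_{M}$ and outside $\mathsf{P}_{G}(C_{M})$. The paper merely states this degree count as a three-case display without justification, whereas you spell out why the neighbour in $C_{p}$ exists precisely when $|C_{p}|\geq|C_{M}|$; the substance is identical.
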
            
\begin{proof}
We note that $\nabla_{c}(G)\geq2$ ensures that the deletion of the vertex $u^{M}_{|C_{M}|}$ from the graph $G$ does not produce two consecutive holes in the graph $\mathscr{D}_{C_{M}}[G]$. Therefore using the arguments in Proposition~\ref{base}, we conclude that the lambda chromatic number of $\mathscr{D}_{C_{M}}[G]$ is $t$. Since $G$ is an edge standardised graph, 
we have the following: 
\begin{align*}\label{deletion}
|E(G)|-|E(\mathscr{D}_{C_{M}}[G])|=\left\{\begin{array}{lcl}
                         |\mathfrak{M}_{c}(G)|-1 & \textnormal{if} & |\mathfrak{M}_{c}(G)\cap\mathsf{P}_{G}(C_{M})|=0\\
                         |\mathfrak{M}_{c}(G)|-2 & \textnormal{if} & |\mathfrak{M}_{c}(G)\cap\mathsf{P}_{G}(C_{M})|=1\\ 
                         |\mathfrak{M}_{c}(G)|-3 & \textnormal{if} & |\mathfrak{M}_{c}(G)\cap\mathsf{P}_{G}(C_{M})|=2 
                         \end{array}\right.
\end{align*}
Hence the formula holds.
\end{proof}

\begin{lemma}\label{minformula}
Let $G$ be an edge standardised graph with lambda chromatic number $t\geq3$ and $C_{0},\ldots,C_{t}$ be the coloured partition  of the vertex set of $G$ with respect to the underlying optimal lambda colouring $c$. Then for $C_{m}\in\mathfrak{m}_{c}(G)$, the following formula holds: 
\begin{equation*}
|E(\mathscr{I}_{C_{m}}[G])|=|E(G)|+t+1-|\mathfrak{m}_{c}(G)\cup\mathsf{P}_{G}(C_{m})|.
\end{equation*}
\end{lemma}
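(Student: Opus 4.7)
The plan is a direct counting argument: since $\mathscr{I}_{C_{m}}[G]$ is obtained from $G$ by adjoining a single new vertex $w := u^{m}_{|C_{m}|+1}$ together with new edges emanating only from $w$, no existing edges are removed, so $|E(\mathscr{I}_{C_{m}}[G])| - |E(G)|$ is precisely the degree of $w$ in $\mathscr{I}_{C_{m}}[G]$. The task therefore reduces to counting the new edges.

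First I would introduce the index set $A_{m}$ of allowed partners, i.e.\ the set of $p \in \{0,\ldots,t\}$ for which an edge $\{w,u^{p}_{|C_{m}|+1}\}$ is possibly inserted. Reading off the definition of $\mathscr{I}_{C_{m}}[G]$, one sees $A_{m}$ consists of all indices except $m$ itself and the indices corresponding to $\mathsf{P}_{G}(C_{m})$, yielding $|A_{m}| = t - |\mathsf{P}_{G}(C_{m})|$ uniformly in the three boundary cases $m=0$, $m=t$, $1\leq m\leq t-1$. Next I would isolate the extra condition $|C_{p}|\geq|C_{m}|+1$: since $C_{m}\in\mathfrak{m}_{c}(G)$ has minimum size, for every $p$ we have $|C_{p}|\geq|C_{m}|$, and the condition $|C_{p}|\geq|C_{m}|+1$ fails exactly when $C_{p}\in\mathfrak{m}_{c}(G)$.

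Consequently, the number of new edges is
\begin{equation*}
|A_{m}| - \bigl|\{p\in A_{m} : C_{p}\in\mathfrak{m}_{c}(G)\}\bigr|.
\end{equation*}
To evaluate the second term I would note that the classes of $\mathfrak{m}_{c}(G)$ whose indices lie \emph{outside} $A_{m}$ are precisely those corresponding to $\{m\}$ together with the indices corresponding to $\mathfrak{m}_{c}(G)\cap\mathsf{P}_{G}(C_{m})$; since $C_{m}\in\mathfrak{m}_{c}(G)$, this gives
\begin{equation*}
\bigl|\{p\in A_{m} : C_{p}\in\mathfrak{m}_{c}(G)\}\bigr| = |\mathfrak{m}_{c}(G)| - 1 - |\mathfrak{m}_{c}(G)\cap\mathsf{P}_{G}(C_{m})|.
\end{equation*}
Substituting and applying inclusion--exclusion, $|\mathfrak{m}_{c}(G)\cup\mathsf{P}_{G}(C_{m})| = |\mathfrak{m}_{c}(G)| + |\mathsf{P}_{G}(C_{m})| - |\mathfrak{m}_{c}(G)\cap\mathsf{P}_{G}(C_{m})|$, will collapse the expression into $t+1-|\mathfrak{m}_{c}(G)\cup\mathsf{P}_{G}(C_{m})|$, which is the claimed formula.

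There is no genuine obstacle here; the only subtlety is bookkeeping across the boundary cases $m\in\{0,t\}$ versus $1\leq m\leq t-1$, and the uniform identity $|A_{m}|=t-|\mathsf{P}_{G}(C_{m})|$ is what allows a single unified computation instead of a case split. No deeper property of edge standardisation beyond the definition of $\mathscr{I}_{C_{m}}[G]$ is needed, and the lambda-chromatic-number invariance guaranteed by Proposition~\ref{base} is not used in this edge count.
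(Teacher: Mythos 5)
Your proof is correct and follows essentially the same route as the paper's: both count the degree of the newly inserted vertex $u^{m}_{|C_{m}|+1}$ by starting from the $t+1$ candidate indices and discarding exactly those $q$ with $C_{q}\in\mathfrak{m}_{c}(G)\cup\mathsf{P}_{G}(C_{m})$ (the prohibited zone ruling out the edge, and membership in $\mathfrak{m}_{c}(G)$ ruling out the partner vertex's existence, with $C_{m}$ itself falling into the latter set). Your splitting into $A_{m}$ followed by inclusion--exclusion is just a slightly more explicit bookkeeping of the paper's one-line union count.
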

\begin{proof}
The result holds since there are exactly $t+1-|\mathfrak{m}_{c}(G)\cup\mathsf{P}_{G}(C_{m})|$ edges which are edges of $\mathscr{I}_{C_{m}}[G]$ but not edges of $G$. The counting is as follows. Since $G$ is an edge standardised graph, the vertex $u^{m}_{|C_{m}|+1}$ is adjacent with the vertex of the form $u^{p}_{|C_{m}|+1}$, where $0\leq p\leq t$ and 
$|C_{p}|\geq|C_{m}|+1$. Hence there are at most $t+1$ such vertices. But if 
$C_{q}\in\mathfrak{m}_{c}(G)\cup\mathsf{P}_{G}(C_{m})$, then there is no such vertex of the form 
$u^{q}_{|C_{m}|+1}\in C_{q}$ such that $\{u^{m}_{|C_{m}|+1},u^{q}_{|C_{m}|+1}\}$ is an edge of $\mathscr{I}_{C_{m}}[G]$ and vice versa. 
\end{proof}

The development of this section and the following one is all about searching an $n-$vertex graph with lambda chromatic number $t\geq3$ containing maximum number of edges. In this regard, we now focus on the following question.  

\begin{question}
Let $G$ be an $n-$vertex graph $G$ with lambda chromatic number $t\geq3$. If $\nabla_{c}(G)\geq2$ for some optimal lambda colouring $c$ of $G$, is it possible that $G$ contains maximum number of edges?
\end{question}

We answer the above question affirmatively. More precisely, we answer that for $t=3$ and $t=4$ we can find such families of graphs. However, we can not find any such graph when $t\geq5$. Formally we explain it in the following results.  

\begin{lemma}\label{atmostone}
Let $G$ be an edge standardised graph with lambda chromatic number $t\geq3$ and $C_{0},\ldots,C_{t}$ be the coloured partition  of the vertex set of $G$ with respect to the underlying optimal lambda colouring $c$. If $G$ contains maximum number of edges, then $\nabla_{c}(G)\leq1$ or all but at most one $C_{i}$ are members of 
$\mathfrak{M}_{c}(G)\sqcup\mathfrak{m}_{c}(G)$ (where $0\leq i\leq t$). 
\end{lemma}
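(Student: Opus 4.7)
The plan is to argue by contradiction: suppose $G$ contains the maximum number of edges, yet $\nabla_c(G) \geq 2$ and at least two colour classes lie outside $\mathfrak{M}_c(G) \sqcup \mathfrak{m}_c(G)$. Writing $a := |\mathfrak{M}_c(G)|$ and $b := |\mathfrak{m}_c(G)|$, this second assumption reads $a + b \leq t - 1$. My goal is to construct an $n$-vertex graph with lambda chromatic number $t$ and strictly more edges than $G$, contradicting maximality.

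The natural candidate is $H := \mathscr{I}_{C_m}\mathscr{D}_{C_M}[G]$ for any choice of $C_M \in \mathfrak{M}_c(G)$ and $C_m \in \mathfrak{m}_c(G)$. Since $\nabla_c(G) \geq 2$, after deletion of $u^{M}_{|C_M|}$ the class $C_M$ still contains at least $|C_m| + 1 \geq 1$ vertex, so no two consecutive holes are produced. Proposition \ref{base} together with the preceding Remark then guarantees that $H$ is an edge standardised graph with lambda chromatic number $t$ and the same vertex count as $G$.

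To measure the edge change I would apply Lemma \ref{maxformula} to $G$ and Lemma \ref{minformula} to $\mathscr{D}_{C_M}[G]$. Since $|C_M| - 1 > |C_m|$, the minimum classes are unaffected by the deletion, i.e.\ $\mathfrak{m}_c(\mathscr{D}_{C_M}[G]) = \mathfrak{m}_c(G)$; and because prohibited zones depend only on colour indices, $\mathsf{P}_{\mathscr{D}_{C_M}[G]}(C_m) = \mathsf{P}_G(C_m)$. Combining the two formulas yields
\begin{equation*}
|E(H)| - |E(G)| = t + 2 - a + |\mathfrak{M}_c(G) \cap \mathsf{P}_G(C_M)| - |\mathfrak{m}_c(G) \cup \mathsf{P}_G(C_m)|.
\end{equation*}
Because $|\mathsf{P}_G(C_m)| \leq 2$, the subtracted term is bounded by $b + 2$, so $|E(H)| - |E(G)| \geq t - a - b \geq 1 > 0$, the desired contradiction.

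The main obstacle I anticipate is precisely the invariance check in the preceding paragraph: one must verify that none of the parameters driving Lemma \ref{minformula} actually shifts when passing from $G$ to $\mathscr{D}_{C_M}[G]$, for otherwise the two formulas cannot be chained. This invariance is exactly what the hypothesis $\nabla_c(G) \geq 2$ buys, since it gives $|C_M| - 1 > |C_m|$ with room to spare. Once this bookkeeping is in place, the arithmetic is elementary and the conclusion falls out from the crude bound $|\mathfrak{m}_c(G) \cup \mathsf{P}_G(C_m)| \leq b + 2$.
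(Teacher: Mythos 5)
Your proposal is correct and follows essentially the same route as the paper: negate the conclusion to get $\nabla_{c}(G)\geq2$ and $|\mathfrak{M}_{c}(G)|+|\mathfrak{m}_{c}(G)|\leq t-1$, form $\mathscr{I}_{C_{m}}\mathscr{D}_{C_{M}}[G]$, chain Lemma~\ref{maxformula} with Lemma~\ref{minformula} after checking that $\mathfrak{m}_{c}$ and the prohibited zones are unchanged by the deletion, and conclude a strict edge gain of at least $t-|\mathfrak{M}_{c}(G)|-|\mathfrak{m}_{c}(G)|\geq1$. The paper's displayed computation \eqref{star} is your identity with the union expanded by inclusion--exclusion, so the two arguments coincide.
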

\begin{proof}
Suppose it does not hold that $\nabla_{c}(G)\leq1$ or all but at most one $C_{i}$ are members of $\mathfrak{M}_{c}(G)\sqcup\mathfrak{m}_{c}(G)$. This implies $\nabla_{c}(G)\geq2$ and $2\leq|\mathfrak{M}_{c}(G)|+|\mathfrak{m}_{c}(G)|\leq t-1$. We choose $A\in\mathfrak{M}_{c}(G)$ and $B\in\mathfrak{m}_{c}(G)$ and construct two edge standardised graphs viz. $\mathscr{D}_{A}[G]$ and $\mathscr{I}_{B}\mathscr{D}_{A}[G]$. 
We note that $\nabla_{c}(G)\geq2$ implies $\mathfrak{m}_{c}(\mathscr{D}_{A}[G])=\mathfrak{m}_{c}(G)$. Hence $B\in\mathfrak{m}_{c}(\mathscr{D}_{A}[G])$. Also $|\mathsf{P}_{\mathscr{D}_{A}[G]}(B)|=|\mathsf{P}_{G}(B)|$. Now by using Lemma~\ref{maxformula}, Lemma~\ref{minformula} and the inequality 
\begin{equation*}
3\leq|\mathfrak{M}_{c}(G)|+|\mathfrak{m}_{c}(G)|+|\mathsf{P}_{G}(B)|=|\mathfrak{M}_{c}(G)|+
|\mathfrak{m}_{c}(\mathscr{D}_{A}[G])|+|\mathsf{P}_{\mathscr{D}_{A}[G]}(B)|\leq t+1
\end{equation*}
we get, 
\begin{align*}
|E(\mathscr{I}_{B}\mathscr{D}_{A}[G])|-|E(G)|&=t+2+|\mathfrak{M}_{c}(G)\cap\mathsf{P}_{G}(A)|+|\mathfrak{m}_{c}(\mathscr{D}_{A}[G])\cap\mathsf{P}_{\mathscr{D}_{A}[G]}(B)|\\
&\hspace{2cm}-\left(|\mathfrak{M}_{c}(G)|+|\mathfrak{m}_{c}(\mathscr{D}_{A}[G])|+|\mathsf{P}_{\mathscr{D}_{A}[G]}(B)|\right)\tag{$\star$}\label{star}\\
&\geq1+|\mathfrak{M}_{c}(G)\cap\mathsf{P}_{G}(A)|+|\mathfrak{m}_{c}(\mathscr{D}_{A}[G])\cap\mathsf{P}_{\mathscr{D}_{A}[G]}(B)|.
\end{align*}
Both the graphs $\mathscr{I}_{B}\mathscr{D}_{A}[G]$ and $G$ are edge standardised graphs and 
$|E(\mathscr{I}_{B}\mathscr{D}_{A}[G])|\geq|E(G)|+1$. Hence $G$ does not contain maximum number of edges, a contradiction arises and the result follows.
\end{proof}

During the transformation of the edge standardised graph $G$ into $\mathscr{I}_{B}\mathscr{D}_{A}[G]$, the Lemma~\ref{maxformula}, Lemma~\ref{minformula}, Lemma~\ref{atmostone} and the the equation \eqref{star} provide us the opportunity to understand the role of the intermediate graph $\mathscr{D}_{A}[G]$. It keeps track the change of the edge distribution from a quantitative point of view. 

\begin{proposition}\label{nablageq2}
Let $G$ be an edge standardised graph with lambda chromatic number $t\geq3$ and $C_{0},\ldots,C_{t}$ be the coloured partition  of the vertex set of $G$ with respect to the underlying optimal lambda colouring $c$. If $\nabla_{c}(G)\geq2$ and $G$ contains maximum number of edges, then exactly one of the following holds.
\begin{enumerate}[\normalfont(a)]
\item For all $A\in\mathfrak{M}_{c}(G)$ and $B\in\mathfrak{m}_{c}(G)$, 
$|\mathfrak{M}_{c}(G)\cap\mathsf{P}_{G}(A)|=0$, $|\mathfrak{m}_{c}(G)\cap\mathsf{P}_{G}(B)|=0$,\\
whenever $|\mathfrak{m}_{c}(G)|+|\mathfrak{M}_{c}(G)|=t$.  
\item  For all $A\in\mathfrak{M}_{c}(G)$ and $B\in\mathfrak{m}_{c}(G)$,
\begin{equation}\tag{$\star\star$}\label{starstar}
1+|\mathfrak{M}_{c}(G)\cap\mathsf{P}_{G}(A)|+|\mathfrak{m}_{c}(G)\cap\mathsf{P}_{G}(B)|\leq|\mathsf{P}_{G}(B)|,
\end{equation}
whenever $|\mathfrak{m}_{c}(G)|+|\mathfrak{M}_{c}(G)|=t+1$.
\end{enumerate}
\end{proposition}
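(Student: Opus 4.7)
The plan is to build directly on the edge-change computation derived in the proof of Lemma~\ref{atmostone}, rather than redoing the bookkeeping from scratch. First I would invoke Lemma~\ref{atmostone}: since $\nabla_c(G)\geq2$ (so $\mathfrak{M}_c(G)$ and $\mathfrak{m}_c(G)$ are disjoint) and $G$ contains the maximum number of edges, all but at most one of the $C_i$'s lies in $\mathfrak{M}_c(G)\sqcup\mathfrak{m}_c(G)$. Hence $|\mathfrak{M}_c(G)|+|\mathfrak{m}_c(G)|$ is either $t$ or $t+1$; these two values are mutually exclusive and give precisely the hypotheses of (a) and (b). So the dichotomy in the statement is forced, and all that remains is to prove the stated conclusion in each case.

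Fix arbitrary $A\in\mathfrak{M}_c(G)$ and $B\in\mathfrak{m}_c(G)$ and consider the edge standardised graph $\mathscr{I}_B\mathscr{D}_A[G]$. Because $\nabla_c(G)\geq2$, deleting the vertex $u^{M}_{|A|}$ drops $|A|$ by only one, hence $|A|-1$ still exceeds $\min_j|C_j|$; so $\mathfrak{m}_c(\mathscr{D}_A[G])=\mathfrak{m}_c(G)$. Moreover, the prohibited zone is defined purely by the index of $B$, so $\mathsf{P}_{\mathscr{D}_A[G]}(B)=\mathsf{P}_G(B)$ as sets. Substituting these identifications into equation \eqref{star} yields
\begin{equation*}
|E(\mathscr{I}_B\mathscr{D}_A[G])|-|E(G)|=t+2+|\mathfrak{M}_c(G)\cap\mathsf{P}_G(A)|+|\mathfrak{m}_c(G)\cap\mathsf{P}_G(B)|-\bigl(|\mathfrak{M}_c(G)|+|\mathfrak{m}_c(G)|+|\mathsf{P}_G(B)|\bigr).
\end{equation*}
Since $\mathscr{I}_B\mathscr{D}_A[G]$ is an $|V(G)|$-vertex edge standardised graph with lambda chromatic number $t$, the extremality of $G$ forces the left hand side to be $\leq0$.

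For Case (a), substitute $|\mathfrak{M}_c(G)|+|\mathfrak{m}_c(G)|=t$ into the resulting inequality; this gives
\begin{equation*}
2+|\mathfrak{M}_c(G)\cap\mathsf{P}_G(A)|+|\mathfrak{m}_c(G)\cap\mathsf{P}_G(B)|\leq|\mathsf{P}_G(B)|.
\end{equation*}
But $|\mathsf{P}_G(B)|\leq2$ by the very definition of the prohibited zone, so both intersection cardinalities must equal zero. For Case (b), substituting $|\mathfrak{M}_c(G)|+|\mathfrak{m}_c(G)|=t+1$ yields \eqref{starstar} verbatim. Since $A,B$ were arbitrary, each case gives the required universal statement.

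The only non-routine step is justifying $\mathfrak{m}_c(\mathscr{D}_A[G])=\mathfrak{m}_c(G)$ and $\mathsf{P}_{\mathscr{D}_A[G]}(B)=\mathsf{P}_G(B)$, which is where the hypothesis $\nabla_c(G)\geq2$ is really used; everything else is arithmetic on \eqref{star}. Once those equalities are in hand, the proof is essentially a two-line substitution in each of the two cases, so I do not foresee a genuine obstacle beyond careful bookkeeping.
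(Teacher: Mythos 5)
Your proof is correct and follows essentially the same route as the paper: both delete a vertex from a maximum class, insert one into a minimum class, apply the edge-change identity \eqref{star} with the observations $\mathfrak{m}_{c}(\mathscr{D}_{A}[G])=\mathfrak{m}_{c}(G)$ and $|\mathsf{P}_{\mathscr{D}_{A}[G]}(B)|=|\mathsf{P}_{G}(B)|$, and conclude from the extremality of $G$. The only (immaterial) difference is that you read the conclusion off the exact difference formula being $\leq 0$, whereas the paper phrases the same computation as a contradiction argument.
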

\begin{proof}
Let $|\mathfrak{m}_{c}(G)|+|\mathfrak{M}_{c}(G)|=t$. Suppose for some $A\in\mathfrak{M}_{c}(G)$ and $B\in\mathfrak{m}_{c}(G)$ we have 
\begin{equation*}
|\mathfrak{M}_{c}(G)\cap\mathsf{P}_{G}(A)|+|\mathfrak{m}_{c}(G)\cap\mathsf{P}_{G}(B)|\geq1.
\end{equation*}
Then we construct two edge standardised graphs viz. $\mathscr{D}_{A}[G]$ and $\mathscr{I}_{B}\mathscr{D}_{A}[G]$. 
We note that $\nabla_{c}(G)\geq2$ implies $\mathfrak{m}_{c}(\mathscr{D}_{A}[G])=\mathfrak{m}_{c}(G)$. Hence $B\in\mathfrak{m}_{c}(\mathscr{D}_{A}[G])$ and 
$|\mathfrak{m}_{c}(\mathscr{D}_{A}[G])\cap\mathsf{P}_{\mathscr{D}_{A}[G]}(B)|=|\mathfrak{m}_{c}(G)\cap\mathsf{P}_{G}(B)|$. Now by using Lemma~\ref{maxformula}, Lemma~\ref{minformula}, \eqref{star} of Lemma~\ref{atmostone} and the inequality 
\begin{equation*}
|\mathfrak{M}_{c}(G)|+|\mathfrak{m}_{c}(G)|+|\mathsf{P}_{G}(B)|=|\mathfrak{M}_{c}(G)|+
|\mathfrak{m}_{c}(\mathscr{D}_{A}[G])|+|\mathsf{P}_{\mathscr{D}_{A}[G]}(B)|\leq t+2
\end{equation*}
we get, 
\begin{align*}
|E(\mathscr{I}_{B}\mathscr{D}_{A}[G])|-|E(G)|&\geq|\mathfrak{M}_{c}(G)\cap\mathsf{P}_{G}(A)|+|\mathfrak{m}_{c}(\mathscr{D}_{A}[G])\cap\mathsf{P}_{\mathscr{D}_{A}[G]}(B)|\\
&=|\mathfrak{M}_{c}(G)\cap\mathsf{P}_{G}(A)|+|\mathfrak{m}_{c}(G)\cap\mathsf{P}_{G}(B)|\geq1.
\end{align*}
Both the graphs $\mathscr{I}_{B}\mathscr{D}_{A}[G]$ and $G$ are edge standardised graphs and 
$|E(\mathscr{I}_{B}\mathscr{D}_{A}[G])|\geq|E(G)|+1$. Hence $G$ does not contain maximum number of edges, a contradiction arises. This concludes (a).

Let $|\mathfrak{m}_{c}(G)|+|\mathfrak{M}_{c}(G)|=t+1$. Suppose for some $A\in\mathfrak{M}_{c}(G)$ and 
$B\in\mathfrak{m}_{c}(G)$ we have 
\begin{equation*}
|\mathfrak{M}_{c}(G)\cap\mathsf{P}_{G}(A)|+|\mathfrak{m}_{c}(G)\cap\mathsf{P}_{G}(B)|\geq|\mathsf{P}_{G}(B)|.
\end{equation*}
Then we construct two edge standardised graphs viz. $\mathscr{D}_{A}[G]$ and $\mathscr{I}_{B}\mathscr{D}_{A}[G]$. 
We note that $\nabla_{c}(G)\geq2$ implies $\mathfrak{m}_{c}(\mathscr{D}_{A}[G])=\mathfrak{m}_{c}(G)$. Hence $B\in\mathfrak{m}_{c}(\mathscr{D}_{A}[G])$ and 
$|\mathfrak{m}_{c}(\mathscr{D}_{A}[G])\cap\mathsf{P}_{\mathscr{D}_{A}[G]}(B)|=|\mathfrak{m}_{c}(G)\cap\mathsf{P}_{G}(B)|$. 
Also $|\mathsf{P}_{\mathscr{D}_{A}[G]}(B)|=|\mathsf{P}_{G}(B)|$. Now by using Lemma~\ref{maxformula}, Lemma~\ref{minformula} and \eqref{star} of Lemma~\ref{atmostone} we get, 
\begin{align*}
|E(\mathscr{I}_{B}\mathscr{D}_{A}[G])|-|E(G)|&=1+|\mathfrak{M}_{c}(G)\cap\mathsf{P}_{G}(A)|+
|\mathfrak{m}_{c}(\mathscr{D}_{A}[G])\cap\mathsf{P}_{\mathscr{D}_{A}[G]}(B)|-|\mathsf{P}_{\mathscr{D}_{A}[G]}(B)|\\
&=1+|\mathfrak{M}_{c}(G)\cap\mathsf{P}_{G}(A)|+|\mathfrak{m}_{c}(G)\cap\mathsf{P}_{G}(B)|-|\mathsf{P}_{G}(B)|\geq1.
\end{align*}
Both the graphs $\mathscr{I}_{B}\mathscr{D}_{A}[G]$ and $G$ are edge standardised graphs and 
$|E(\mathscr{I}_{B}\mathscr{D}_{A}[G])|\geq|E(G)|+1$. Hence $G$ does not contain maximum number of edges, a contradiction arises. This concludes (b).
\end{proof}

\begin{remark}
The \eqref{starstar} condition of Proposition~\ref{nablageq2} expounds the followings.
\begin{itemize}
\item Any three consecutive members (i.e. members of the form $C_{i},C_{i+1},C_{i+2}$, where $0\leq i\leq t-2$) of 
$C_{0},\ldots,C_{t}$ must contain at least one member of $\mathfrak{M}_{c}(G)$ and at least one member of 
$\mathfrak{m}_{c}(G)$.
\item Occurrence of two consecutive members (i.e. members of the form $C_{i},C_{i+1}$, where $0\leq i\leq t-1$) in $\mathfrak{M}_{c}(G)$ forbids occurrence of two consecutive members in $\mathfrak{m}_{c}(G)$ and vice-versa. 
\end{itemize}
\end{remark}

Suppose $G$ is an edge standardised graph with lambda chromatic number $t\geq3$ and $C_{0},\ldots,C_{t}$ be the coloured partition  of the vertex set of $G$ with respect to the underlying optimal lambda colouring 
$c:V(G)\rightarrow\{0,\ldots,t\}$. Also let $\nabla_{c}(G)\geq2$. From \eqref{star} of Lemma~\ref{atmostone}, if $G$ has maximum number of edges and $|\mathfrak{M}_{c}(G)|+|\mathfrak{m}_{c}(G)|=t$, then $|E(\mathscr{I}_{B}\mathscr{D}_{A}[G])|=|E(G)|$ for all $A\in\mathfrak{M}_{c}(G)$ and $B\in\mathfrak{m}_{c}(G)$. But if $G$ has maximum number of edges and 
$|\mathfrak{M}_{c}(G)|+|\mathfrak{m}_{c}(G)|=t+1$, then either 
$|E(\mathscr{I}_{B}\mathscr{D}_{A}[G])|=|E(G)|$ or $|E(\mathscr{I}_{B}\mathscr{D}_{A}[G])|=|E(G)|-1$, where $A\in\mathfrak{M}_{c}(G)$ and $B\in\mathfrak{m}_{c}(G)$.

Here we derive the necessary conditions to prohibit the increase in the number of edges. The first two results (Proposition~\ref{sum_t} and Proposition~\ref{sum_t+1_gain-0}) are about the transformations that keep the number of edges invariant. However, the third one (Proposition~\ref{sum_t+1_gain-1}) is about the transformations when the 
number of edges reduces. Eventually, these lead to stationary conditions (vide the definition of stationary graph) in the subsequent development.

\begin{proposition}\label{sum_t}
Let $G$ be an edge standardised graph with lambda chromatic number $t\geq3$ and $C_{0},\ldots,C_{t}$ be the coloured partition  of the vertex set of $G$ with respect to the underlying optimal lambda colouring $c$. If 
$\nabla_{c}(G)\geq2$ and $G$ contains maximum number of edges with $|\mathfrak{m}_{c}(G)|+|\mathfrak{M}_{c}(G)|=t$, then 
$t=3$ or $t=4$; moreover $|\mathfrak{m}_{c}(G)|=1$ (and consequently), $\nabla_{c}(G)=2$ and $\mathfrak{m}_{c}(G)=\{C_{i}\}$, where $1\leq i\leq t-1$.
\end{proposition}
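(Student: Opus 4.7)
My plan is to exploit the edge-preserving transformation $\mathscr{I}_B\mathscr{D}_A$ from the preceding lemmas and extract structural information from equation \eqref{star}. Write $m,M$ for the minimum and maximum colour class sizes (so $\nabla_c(G)=M-m\geq 2$), and let $I$ be the size of the unique intermediate class (the single $C_i$ outside $\mathfrak{M}_c(G)\cup\mathfrak{m}_c(G)$). Since Proposition~\ref{nablageq2}(a) applies, the intersections $\mathfrak{M}_c(G)\cap\mathsf{P}_G(A)$ and $\mathfrak{m}_c(G)\cap\mathsf{P}_G(B)$ vanish for every $A\in\mathfrak{M}_c(G)$ and $B\in\mathfrak{m}_c(G)$, and \eqref{star} (using that $\mathfrak{m}_c(\mathscr{D}_A[G])=\mathfrak{m}_c(G)$ because $\nabla_c(G)\geq 2$) collapses to
\begin{equation*}
|E(\mathscr{I}_B\mathscr{D}_A[G])|-|E(G)|=2-|\mathsf{P}_G(B)|,
\end{equation*}
which will be the workhorse of the whole proof.

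The first consequence is $\mathfrak{m}_c(G)\subseteq\{C_1,\ldots,C_{t-1}\}$: otherwise some $B\in\mathfrak{m}_c(G)$ would satisfy $|\mathsf{P}_G(B)|=1$ and the transformation would strictly increase the edge count, contradicting maximality. Next I prove $|\mathfrak{m}_c(G)|=1$. If both $|\mathfrak{m}_c(G)|\geq 2$ and $|\mathfrak{M}_c(G)|\geq 2$, pick any such $A,B$; since $B$ is interior the transformation preserves edges, and because $\nabla_c(G)\geq 2$ the new sizes $M-1$ (of $A$) and $m+1$ (of $B$) lie strictly between $m$ and $M$, so $\mathfrak{M}_c$ loses exactly $A$, $\mathfrak{m}_c$ loses exactly $B$, and $|\mathfrak{M}_c|+|\mathfrak{m}_c|$ drops to $t-2<t$ while $\nabla_c\geq 2$ persists, contradicting Lemma~\ref{atmostone} applied to the new (still maximum-edge) graph. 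The alternative $|\mathfrak{M}_c(G)|=1$, $|\mathfrak{m}_c(G)|=t-1$ is impossible too, because the $t-1$ pairwise non-consecutive minimum classes would have to fit inside the interior $\{C_1,\ldots,C_{t-1}\}$, yet the maximum independent set in that path has only $\lceil(t-1)/2\rceil<t-1$ vertices for $t\geq 3$. Hence $|\mathfrak{m}_c(G)|=1$, $|\mathfrak{M}_c(G)|=t-1$, and $\mathfrak{m}_c(G)=\{C_i\}$ with $1\leq i\leq t-1$; the bound $t\leq 4$ then follows at once, since the $t-1$ elements of $\mathfrak{M}_c(G)$ form an independent set in the path on $\{0,\ldots,t\}$ whose independence number is $\lceil(t+1)/2\rceil$.

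Finally, to derive $\nabla_c(G)=2$, assume $\nabla_c(G)\geq 3$ and apply the transformation with the unique $B\in\mathfrak{m}_c(G)$ and any $A\in\mathfrak{M}_c(G)$; edges are preserved. In the new graph $B$ has size $m+1$, $A$ has size $M-1\geq m+2$, the remaining $t-2$ classes of $\mathfrak{M}_c(G)\smallsetminus\{A\}$ still have size $M$, and the intermediate class retains size $I\in\{m+1,\ldots,M-1\}$; hence the new minimum is $m+1$, the new maximum is $M$, and $\nabla_c$ is still $\geq 2$. If $I>m+1$, the new $|\mathfrak{m}_c|+|\mathfrak{M}_c|$ equals $1+(t-2)=t-1<t$, contradicting Lemma~\ref{atmostone}. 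If $I=m+1$, the new $|\mathfrak{m}_c|$ equals $2$ while $|\mathfrak{m}_c|+|\mathfrak{M}_c|=t$, contradicting the just-established uniqueness of the minimum class. Either way $\nabla_c(G)\geq 3$ is untenable, so $\nabla_c(G)=2$. I expect the main delicacy to be the bookkeeping of how $\mathfrak{M}_c$ and $\mathfrak{m}_c$ change after each application of $\mathscr{I}_B\mathscr{D}_A$---one has to remove $A,B$ and correctly identify any newly created minimum or maximum classes before Lemma~\ref{atmostone} or the earlier step of this proof can be reapplied cleanly.
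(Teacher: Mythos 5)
Your proof is correct and follows the paper's strategy: establish interiority of every minimum class via \eqref{star}, force $|\mathfrak{m}_{c}(G)|=1$ by a deletion--insertion transformation, bound $t$ by the non-consecutiveness coming from Proposition~\ref{nablageq2}(a), and pin down $\nabla_{c}(G)=2$ by the same two-case analysis on the size of the unique intermediate class (your split $I>m+1$ versus $I=m+1$ matches the paper's Case~I/Case~II, including the self-referential appeal to the already-proved uniqueness of the minimum class for the transformed graph). The only deviations are tactical and both sound: you rule out $|\mathfrak{m}_{c}(G)|\geq2$ with a single edge-preserving $\mathscr{I}_{B}\mathscr{D}_{A}$ followed by Lemma~\ref{atmostone} applied to the transformed graph, whereas the paper composes two deletions and two insertions to exhibit a gain of two edges, and you dispose of $|\mathfrak{M}_{c}(G)|=1$ by an independence-number count on the path of colour classes rather than by an edge-gain argument.
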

\begin{proof}
Since $\nabla_{c}(G)>0$, there exists $A\in\mathfrak{M}_{c}(G)$. Let for some $X\in\mathfrak{m}_{c}(G)$ 
$|\mathsf{P}_{G}(X)|=1$. Now both the graphs $\mathscr{I}_{X}\mathscr{D}_{A}[G]$ and $G$ are edge standardised 
graphs. By \eqref{star} of Lemma~\ref{atmostone}, we have $|E(\mathscr{I}_{X}\mathscr{D}_{A}[G])|\geq|E(G)|+1$. Hence $G$ does not contain maximum number of edges. A contradiction arises. Hence for each $B\in\mathfrak{m}_{c}(G)$, $|\mathsf{P}_{G}(B)|=2$. Consequently, $C_{i}\in\mathfrak{m}_{c}(G)$ implies $1\leq i\leq t-1$. 

We now proof the following claim.

\noindent{\textsl{Claim} :} With these conditions, $|\mathfrak{M}_{c}(G)|\geq2$. 
\begin{proof}[\tt{Proof of claim} :]\renewcommand{\qedsymbol}{}
Suppose $|\mathfrak{M}_{c}(G)|\leq1$, then $|\mathfrak{M}_{c}(G)|=1$. If $Y$ be the unique member of 
$\mathfrak{M}_{c}(G)$, then $|E(\mathscr{I}_{X}\mathscr{D}_{Y}[G])|\geq|E(G)|+1$ for each $X\in\mathfrak{m}_{c}(G)$. Since  $G$ and $\mathscr{I}_{X}\mathscr{D}_{Y}[G]$ both are edge standardised graphs, we have $G$ does not contain maximum number of edges, a contradiction arises. Hence the claim is established.
\end{proof}

Suppose with these conditions, we have $|\mathfrak{m}_{c}(G)|\geq2$. Using the above claim we choose $A$, 
$A'\in\mathfrak{M}_{c}(G)$ and $B$, $B'\in\mathfrak{m}_{c}(G)$. We construct the graphs $\mathscr{D}_{A}[G]$, $\mathscr{D}_{A'}\mathscr{D}_{A}[G]$, $\mathscr{I}_{B}\mathscr{D}_{A'}\mathscr{D}_{A}[G]$ and $\mathscr{I}_{B'}\mathscr{I}_{B}\mathscr{D}_{A'}\mathscr{D}_{A}[G]$. Now $\nabla_{c}(G)\geq2$. Also 
$|\mathfrak{M}_{c}(G)|\geq2$, therefore $\nabla_{c}(\mathscr{D}_{A}[G])\geq2$ and by Lemma~\ref{maxformula}, lambda chromatic number of $\mathscr{D}_{A'}\mathscr{D}_{A}[G]$ is $t$. Then by using Proposition~\ref{base}, $\mathscr{I}_{B}\mathscr{D}_{A'}\mathscr{D}_{A}[G]$ and $\mathscr{I}_{B'}\mathscr{I}_{B}\mathscr{D}_{A'}\mathscr{D}_{A}[G]$ are of lambda chromatic number $t$. 
We note the following :
\begin{align*}
|\mathfrak{M}_{c}(\mathscr{D}_{A}[G])|&=|\mathfrak{M}_{c}(G)|-1,\\
|\mathfrak{m}_{c}(\mathscr{D}_{A'}\mathscr{D}_{A}[G])|&=|\mathfrak{m}_{c}(G)|\textup{ and}\\ 
|\mathfrak{m}_{c}(\mathscr{I}_{B}\mathscr{D}_{A'}\mathscr{D}_{A}[G])|&=|\mathfrak{m}_{c}(G)|-1.
\end{align*}
Using these, Lemma~\ref{maxformula} and Lemma~\ref{minformula} we have, 
\begin{align*}
|E(\mathscr{I}_{B'}\mathscr{I}_{B}\mathscr{D}_{A'}\mathscr{D}_{A}[G])|-|E(G)|&=2t+6+|\mathfrak{M}_{c}(\mathscr{D}_{A}[G])\cap\mathsf{P}_{\mathscr{D}_{A}[G]}(A')|+|\mathfrak{M}_{c}(G)\cap\mathsf{P}_{G}(A)|+\\
&\hspace{1cm}+|\mathfrak{m}_{c}(\mathscr{D}_{A'}\mathscr{D}_{A}[G])\cap\mathsf{P}_{\mathscr{D}_{A'}\mathscr{D}_{A}[G]}(B)|+\\
&\hspace{1cm}+|\mathfrak{m}_{c}(\mathscr{I}_{B}\mathscr{D}_{A'}\mathscr{D}_{A}[G])\cap\mathsf{P}_{\mathscr{I}_{B}\mathscr{D}_{A'}\mathscr{D}_{A}[G]}(B')|-\\
&\hspace{1cm}-2(|\mathfrak{M}_{c}(G)|+|\mathfrak{m}_{c}(G)|)\\
&\hspace{1cm}-|\mathsf{P}_{\mathscr{D}_{A'}\mathscr{D}_{A}}(B)|-
|\mathsf{P}_{\mathscr{I}_{B}\mathscr{D}_{A'}\mathscr{D}_{A}}(B')|\\
&\geq2
\end{align*}
Both the graphs $\mathscr{I}_{B'}\mathscr{I}_{B}\mathscr{D}_{A'}\mathscr{D}_{A}[G]$ and $G$ are edge standardised graphs. Also $|E(\mathscr{I}_{B'}\mathscr{I}_{B}\mathscr{D}_{A'}\mathscr{D}_{A}[G])|\geq|E(G)|+2$. Hence $G$ does not contain maximum number of edges. A contradiction arises. This proves $|\mathfrak{m}_{c}(G)|=1$.

Let $B$ be the unique member of $\mathfrak{m}_{c}(G)$. Using the aforementioned arguments we have 
$|\mathsf{P}_{G}(B)|=2$. Also we have 
$|\mathfrak{M}_{c}(G)|+|\mathfrak{m}_{c}(G)|=t$ and hence by using (a) of Proposition~\ref{nablageq2}, we have for all $A\in\mathfrak{M}_{c}(G)$, $|\mathfrak{M}_{c}(G)\cap\mathsf{P}_{G}(A)|=0$ and 
$|\mathfrak{m}_{c}(G)\cap\mathsf{P}_{G}(B)|=0$. These together imply 
$2\leq|\mathfrak{M}_{c}(G)|\leq3$. Hence $4\leq t+1\leq5$, i.e. $3\leq t\leq4$.

Let $A\in\mathfrak{M}_{c}(G)$ and $C$ denote the unique coloured class with $|B|+1\leq|C|\leq|A|-1$. We consider the edge standardised graph $G':=\mathscr{I}_{B}\mathscr{D}_{A}[G]$. Suppose $\nabla_{c}(G)\geq3$, then we conclude 
$|C|-|B|\geq2$ or $|A|-|C|\geq2$.

\noindent{\textsf{Case} I : } Suppose $|C|-|B|\geq2$.

\noindent For this case, we have $|\mathfrak{M}_{c}(G)|=t-1\geq2$. Consequently $|\mathfrak{m}_{c}(G')|=1$ and 
$|\mathfrak{M}_{c}(G')|+|\mathfrak{m}_{c}(G')|=t-1$. Hence we conclude by Lemma~\ref{atmostone}, $G'$ can not contain maximum number of edges and by \eqref{star}, $|E(G')|\geq|E(G)|$. But $G$ contains maximum number of edges, which leads to a contradiction. 

\noindent{\textsf{Case} II : } Suppose $|A|-|C|\geq2$.

\noindent If $|C|-|B|\geq2$, then $|\mathfrak{M}_{c}(G)|=t-1\geq2$ and consequently 
$|\mathfrak{M}_{c}(G')|+|\mathfrak{m}_{c}(G')|=t-1$. With a similar argument as in Case~I, we have a contradiction. If 
$|C|-|B|=1$, then $|\mathfrak{M}_{c}(G')|+|\mathfrak{m}_{c}(G')|=t$ but $|\mathfrak{m}_{c}(G')|=2$. Hence by \eqref{star} of Lemma~\ref{atmostone}, we conclude $|E(G')|\geq|E(G)|$. Since $G'$ is an edge standardised graph and $G$ contains maximum number of edges, we conclude that $G'$ contains maximum number of edges. A contradiction arises, as for such edge 
standardised graph with maximum number of edges has the property $|\mathfrak{m}_{c}(G')|=1$.

This shows for each of the cases leads to a contradiction. Hence $|C|=1+|B|$ and $|A|=1+|C|$, i.e. $\nabla_{c}(G)=2$.
\end{proof}

\begin{proposition}\label{sum_t+1_gain-0}
Let $G$ be an edge standardised graph with lambda chromatic number $t\geq3$ and $C_{0},\ldots,C_{t}$ be the coloured partition  of the vertex set of $G$ with respect to the underlying optimal lambda colouring $c$. 
If $\nabla_{c}(G)\geq2$, $G$ contains maximum number of edges with 
$|\mathfrak{m}_{c}(G)|+|\mathfrak{M}_{c}(G)|=t+1$ and for some $A\in\mathfrak{M}_{c}(G)$ and $B\in\mathfrak{m}_{c}(G)$, 
$|E(\mathscr{I}_{B}\mathscr{D}_{A}[G])|=|E(G)|$, then $|\mathfrak{m}_{c}(G)|=1$. Consequently, the following results hold.
\begin{enumerate}[\normalfont(a)]
\item $3\leq|\mathfrak{M}_{c}(G)|\leq4$ and $3\leq t\leq4$.
\item $\mathfrak{m}_{c}(G)=\{C_{i}\}$, where $1\leq i\leq t-1$.
\item If $t=4$, then $\nabla_{c}(G)=2$.
\item If $t=3$, then $2\leq\nabla_{c}(G)\leq3$.
\end{enumerate}
\end{proposition}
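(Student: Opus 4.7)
The plan is to parallel Proposition~\ref{sum_t} in the regime $|\mathfrak{M}_c(G)|+|\mathfrak{m}_c(G)|=t+1$ with the added zero-gain input. First I would translate $|E(\mathscr{I}_B\mathscr{D}_A[G])|=|E(G)|$ into a numerical identity by substituting into (\ref{star}): the hypothesis $\nabla_c(G)\geq 2$ gives $\mathfrak{m}_c(\mathscr{D}_A[G])=\mathfrak{m}_c(G)$ and $\mathsf{P}_{\mathscr{D}_A[G]}(B)=\mathsf{P}_G(B)$, and $|\mathfrak{M}_c(G)|+|\mathfrak{m}_c(G)|=t+1$ then collapses (\ref{star}) to the equality case of (\ref{starstar}):
\begin{equation*}
|\mathsf{P}_G(B)|=1+|\mathfrak{M}_c(G)\cap\mathsf{P}_G(A)|+|\mathfrak{m}_c(G)\cap\mathsf{P}_G(B)|.
\end{equation*}
Since $G$ has maximum number of edges, Proposition~\ref{nablageq2}(b) additionally gives the non-strict inequality at every other $(A',B')\in\mathfrak{M}_c(G)\times\mathfrak{m}_c(G)$.

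I would then prove $|\mathfrak{m}_c(G)|=1$ by contradiction. Assuming $|\mathfrak{m}_c(G)|\geq 2$, I first rule out $|\mathfrak{M}_c(G)|=1$ (which would force $|\mathfrak{m}_c(G)|=t$): for any placement of the unique maximum class, a brief case analysis produces some $B\in\mathfrak{m}_c(G)$ whose two prohibited-zone neighbours both lie in $\mathfrak{m}_c(G)$, violating (\ref{starstar}) for $t\geq 3$. Hence $|\mathfrak{M}_c(G)|\geq 2$. Picking distinct $A,A'\in\mathfrak{M}_c(G)$ and $B,B'\in\mathfrak{m}_c(G)$, I would form the four-step cascade $\mathscr{I}_{B'}\mathscr{I}_B\mathscr{D}_{A'}\mathscr{D}_A[G]$ (analogous to that in Proposition~\ref{sum_t}), iterate Lemma~\ref{maxformula} and Lemma~\ref{minformula} across the four operations while tracking how $\mathfrak{M}_c$, $\mathfrak{m}_c$ and their intersections with the evolving prohibited zones update step-by-step, and combine with the key identity and the universal (\ref{starstar}) to show a strictly positive net edge gain — contradicting maximality of $|E(G)|$. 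Therefore $|\mathfrak{m}_c(G)|=1$ and $|\mathfrak{M}_c(G)|=t$.

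For (a) and (b), let $B$ be the unique minimum. Then $|\mathfrak{m}_c(G)\cap\mathsf{P}_G(B)|=0$, and the key identity simplifies to $|\mathsf{P}_G(B)|=1+|\mathfrak{M}_c(G)\cap\mathsf{P}_G(A)|$. If $B=C_0$, the universal (\ref{starstar}) forces $|\mathfrak{M}_c(G)\cap\mathsf{P}_G(A')|=0$ for every $A'\in\mathfrak{M}_c(G)$, which is impossible once $|\mathfrak{M}_c(G)|=t\geq 3$ because two consecutive classes then both lie in $\mathfrak{M}_c(G)$; symmetric reasoning excludes $B=C_t$. This gives (b). The universal (\ref{starstar}) next demands $|\mathfrak{M}_c(G)\cap\mathsf{P}_G(C_j)|\leq 1$ for every $j\in\{1,\ldots,t-1\}\setminus\{i\}$, forcing $\{1,\ldots,t-1\}\setminus\{i\}\subseteq\{i-1,i+1\}$. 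Hence $t-2\leq 2$, so $t\in\{3,4\}$ and $|\mathfrak{M}_c(G)|=t\in\{3,4\}$, establishing (a).

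For (c) and (d), I examine $G':=\mathscr{I}_B\mathscr{D}_A[G]$, whose sizes are $h-1$ at $A$, $l+1$ at $B$ and $h$ elsewhere (writing $h=|A|$, $l=|B|$, $\nabla_c(G)=h-l$). Depending on $h-l$, the invariant $|\mathfrak{M}_c(G')|+|\mathfrak{m}_c(G')|$ equals $t+1$ (if $h-l=2$) or $t$ (if $h-l\geq 3$), and $\nabla_c(G')=\max(1,h-l-1)$. If $h-l\geq 4$, then $\nabla_c(G')\geq 3$ contradicts the $\nabla=2$ conclusion of Proposition~\ref{sum_t} applied to the maximum-edge graph $G'$. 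If $t=4$ and $h-l=3$, a direct check shows that some class in $\mathfrak{M}_c(G')$ still has a neighbour in $\mathfrak{M}_c(G')$, violating Proposition~\ref{nablageq2}(a) and preventing $G'$ from being maximum — contradicting $|E(G')|=|E(G)|$. This forces $h-l=2$ for $t=4$, yielding (c). For $t=3$, the case $h-l=3$ passes both checks (Proposition~\ref{nablageq2}(a) holds for $G'$, and Proposition~\ref{sum_t} is satisfied), so both $\nabla_c(G)\in\{2,3\}$ remain admissible, giving (d). The most intricate step will be the four-operation bookkeeping in paragraph two, where the four separate contributions from Lemmas~\ref{maxformula} and~\ref{minformula} must combine with the identity and the universal inequality into a strictly positive total.
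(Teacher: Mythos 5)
Your proposal reaches all the stated conclusions and parts (a)--(d) are argued essentially as in the paper, but your route to the central claim $|\mathfrak{m}_{c}(G)|=1$ is genuinely different and noticeably heavier than the paper's. The paper exploits the zero-gain hypothesis directly: since $|E(\mathscr{I}_{B}\mathscr{D}_{A}[G])|=|E(G)|$ and $G':=\mathscr{I}_{B}\mathscr{D}_{A}[G]$ is an edge standardised graph on the same number of vertices with the same lambda chromatic number, $G'$ is itself edge-maximal; if $|\mathfrak{m}_{c}(G)|\geq2$ (and $|\mathfrak{M}_{c}(G)|\geq2$) then $|\mathfrak{M}_{c}(G')|+|\mathfrak{m}_{c}(G')|=t-1$ while $\nabla_{c}(G')=\nabla_{c}(G)\geq2$, which contradicts Lemma~\ref{atmostone} in one line --- no four-step cascade is needed. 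Your cascade $\mathscr{I}_{B'}\mathscr{I}_{B}\mathscr{D}_{A'}\mathscr{D}_{A}[G]$ can be made to work, but be aware that in the regime $|\mathfrak{M}_{c}(G)|+|\mathfrak{m}_{c}(G)|=t+1$ the crude bound from Lemmas~\ref{maxformula} and~\ref{minformula} is only $2t+6-2(t+1)-2-2=0$, so an \emph{arbitrary} choice of $A,A',B,B'$ need not produce a strict gain: for instance the pattern $M,m,M,m,M,M,m,M,M$ (with $t=8$, which satisfies \eqref{starstar} everywhere and the zero-gain identity at $A_{0}=C_{4}$) gives net gain exactly $0$ for the choice $A=C_{0}$, $A'=C_{2}$, $B=C_{1}$, $B'=C_{3}$. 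The cascade must be anchored at the key-identity pair $(A_{0},B_{0})$: taking $A=A_{0}$ and $B=B_{0}$ one gets $I_{2}+I_{3}=|\mathsf{P}_{G}(B_{0})|-1$, whence the gain is at least $3+I_{1}+I_{4}-|\mathsf{P}_{G}(B')|\geq1$. You do say ``combine with the key identity'', so the intent is there, but this anchoring is the step that actually carries the argument and should be made explicit; otherwise the ``four-operation bookkeeping'' you flag as the intricate step would end in $\geq0$ rather than a contradiction. Everything else --- the elimination of $|\mathfrak{M}_{c}(G)|=1$ via a \eqref{starstar} violation (the paper instead uses a direct gain computation), the run-length argument bounding $t\leq4$, the exclusion of $B\in\{C_{0},C_{t}\}$, and the $\nabla$ analysis via Proposition~\ref{sum_t} and Proposition~\ref{nablageq2}(a) applied to $G'$ --- is sound and equivalent in substance to the paper's proof.
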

\begin{proof}
If $\mathfrak{M}_{c}(G)$ contains a unique element $X$, then due to the assumption $\nabla_{c}(G)\geq2$, we 
have for each $Y\in\mathfrak{m}_{c}(G)$, $|E(\mathscr{I}_{Y}\mathscr{D}_{X}[G])|\geq1+|E(G)|$. Since both the 
graphs $\mathscr{I}_{Y}\mathscr{D}_{X}[G]$ and $G$ are edge standardised graphs, this leads to a 
contradiction to the assumption that $G$ contains maximum number of edges. Hence $|\mathfrak{M}_{c}(G)|\geq2$. 
Let for some $A\in\mathfrak{M}_{c}(G)$ and $B\in\mathfrak{m}_{c}(G)$, 
$|E(\mathscr{I}_{B}\mathscr{D}_{A}[G])|=|E(G)|$ holds. Suppose $|\mathfrak{m}_{c}(G)|\geq2$, then for the edge standardised graph $G':=\mathscr{I}_{B}\mathscr{D}_{A}[G]$ we have 
$|\mathfrak{M}_{c}(G')|+|\mathfrak{m}_{c}(G')|=t-1$. Hence we conclude by Lemma~\ref{atmostone}, $G'$ can not 
contain maximum number of edges. But $|E(G')|=|E(G)|$. This implies $G$ can not contain maximum number of edges. 
A contradiction arises. Hence $|\mathfrak{m}_{c}(G)|=1$ and the result follows.

Suppose $|\mathfrak{M}_{c}(G)|\geq5$. This, together with $|\mathfrak{m}_{c}(G)|=1$ and 
$|\mathfrak{m}_{c}(G)|+|\mathfrak{M}_{c}(G)|=t+1$, yields the existence of an integer $i$, with $0\leq i\leq t-2$ 
such that $C_{i}$, $C_{i+1}$, $C_{i+2}$ (i.e. consecutive three members) are members of 
$\mathfrak{M}_{c}(G)$, this contradicts \eqref{starstar} of Proposition~\ref{nablageq2}, therefore 
$|\mathfrak{M}_{c}(G)|\leq4$. Since $|\mathfrak{M}_{c}(G)|+|\mathfrak{m}_{c}(G)|=t+1\geq4$, we have 
$|\mathfrak{M}_{c}(G)|\geq3$. Hence the result (a) holds.

Suppose $C_{0}$ is the unique member of $\mathfrak{m}_{c}(G)$. This, together with (a), yields the existence of an integer $i$, with $1\leq i\leq t-2$ such that $C_{i}$, $C_{i+1}$, $C_{i+2}$ (i.e. consecutive three members) 
are members of $\mathfrak{M}_{c}(G)$, this contradicts \eqref{starstar} of Proposition~\ref{nablageq2}. 
Similarly our assumption, $C_{t}$ is the unique member of $\mathfrak{m}_{c}(G)$ leads to a contradiction. 
This implies the result (b).

To show (c), first we note that since $t=4$, by using (b) of Proposition~\ref{nablageq2}, $\{C_{0},C_{1},C_{3},C_{4}\}$ 
is the complete list of members of $\mathfrak{M}_{c}(G)$ and $\mathfrak{m}_{c}(G)=\{C_{2}\}$. 
Suppose $\nabla_{c}(G)\geq3$. We note that for this case
$|\mathfrak{M}_{c}(G')|=3$ and $\mathfrak{m}_{c}(G')$ contains unique member say $B'$. Therefore there exists $A'\in\mathfrak{M}_{c}(G')$ such that $|\mathsf{P}_{G'}(A')\cap\mathfrak{M}_{c}(G')|=1$. Also 
$|\mathsf{P}_{G'}(B')|=2$. Hence by using \eqref{star} of Lemma~\ref{atmostone} we have 
$|E(\mathscr{I}_{B'}\mathscr{D}_{A'}[G'])|\geq1+|E(G')|=1+|E(G)|$. This is a contradiction 
to the assumption that $G$ contains maximum number of edges. Hence  $\nabla_{c}(G)=2$.

To show (d), first we note that since $t=3$, by using (b) of Proposition~\ref{nablageq2}, either 
$\{C_{0},C_{2},C_{3}\}$ is the complete list of members of  $\mathfrak{M}_{c}(G)$ and $\mathfrak{m}_{c}(G)=\{C_{1}\}$ 
or $\{C_{0},C_{1},C_{3}\}$ is the complete list of members of 
$\mathfrak{M}_{c}(G)$ and $\mathfrak{m}_{c}(G)=\{C_{2}\}$. Note that for the first choice and second choice of the coloured 
partitions, we have $|E(G')|=|E(G)|$ implies $A\neq C_{0}$ and $A\neq C_{3}$ for the respective choices. 
Suppose $\nabla_{c}(G)\geq4$. We note that for this case $|\mathfrak{M}_{c}(G')|=2$ and $\mathfrak{m}_{c}(G')$ contains unique member say $B'$. Let $A'\in\mathfrak{M}_{c}(G')$ and we construct $G'':=\mathscr{I}_{B'}\mathscr{D}_{A'}[G']$. Here 
$|\mathfrak{M}_{c}(G'')|=1=|\mathfrak{m}_{c}(G'')|$. We assume $\mathfrak{M}_{c}(G'')=\{X\}$ and $\mathfrak{m}_{c}(G'')=\{Y\}$, then $|\mathfrak{M}_{c}(G'')|+|\mathfrak{m}_{c}(G'')|=2=t-1$. Hence we conclude by Lemma~\ref{atmostone}, $G''$ can not 
contain maximum number of edges. But $|E(G'')|=|E(G')|=|E(G)|$. This contradicts the assumption that $G$ contains maximum number of edges. Hence  $2\leq\nabla_{c}(G)\leq3$.  
\end{proof}

\begin{proposition}\label{sum_t+1_gain-1}
Let $G$ be an edge standardised graph with lambda chromatic number $t\geq3$ and $C_{0},\ldots,C_{t}$ be the coloured partition  of the vertex set of $G$ with respect to the underlying optimal lambda colouring $c$. If 
$\nabla_{c}(G)\geq2$, $G$ contains maximum number of edges and $|E(\mathscr{I}_{B}\mathscr{D}_{A}[G])|=|E(G)|-1$ for all $A\in\mathfrak{M}_{c}(G)$ and $B\in\mathfrak{m}_{c}(G)$, then $|\mathfrak{m}_{c}(G)|=2$,
$|\mathfrak{M}_{c}(G)|=3$, $t=4$ and $\nabla_{c}(G)=2$. Moreover, if $C_{i}\in\mathfrak{m}_{c}(G)$, then $1\leq i\leq t-1$.
\end{proposition}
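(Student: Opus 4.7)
The plan is to extract maximal structural information from the hypothesis $|E(\mathscr{I}_B\mathscr{D}_A[G])|=|E(G)|-1$ holding for every $A\in\mathfrak{M}_c(G)$ and $B\in\mathfrak{m}_c(G)$, and then to construct an edge standardised graph on the same vertex set with strictly more edges whenever $t\ge6$ or $\nabla_c(G)\ge3$, contradicting the maximality of $|E(G)|$.

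First I would feed the hypothesis into the identity \eqref{star} of Lemma~\ref{atmostone}; since $\nabla_c(G)\ge2$ gives $\mathfrak{m}_c(\mathscr{D}_A[G])=\mathfrak{m}_c(G)$ and $\mathsf{P}_{\mathscr{D}_A[G]}(B)=\mathsf{P}_G(B)$, the identity collapses to
\[
|\mathfrak{M}_c(G)|+|\mathfrak{m}_c(G)|+|\mathsf{P}_G(B)|-|\mathfrak{M}_c(G)\cap\mathsf{P}_G(A)|-|\mathfrak{m}_c(G)\cap\mathsf{P}_G(B)|=t+3.
\]
Combining this with Lemma~\ref{atmostone} (which, together with $\nabla_c(G)\ge2$, forces $|\mathfrak{M}_c(G)|+|\mathfrak{m}_c(G)|\in\{t,t+1\}$) and the trivial bound $|\mathsf{P}_G(B)|\le2$, the only possibility is
\[
|\mathfrak{M}_c(G)|+|\mathfrak{m}_c(G)|=t+1,\quad |\mathsf{P}_G(B)|=2,\quad |\mathfrak{M}_c(G)\cap\mathsf{P}_G(A)|=|\mathfrak{m}_c(G)\cap\mathsf{P}_G(B)|=0,
\]
for every $A$ and $B$. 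The equality $|\mathsf{P}_G(B)|=2$ immediately yields the last clause. The two vanishing intersections say that no two indices of $\mathfrak{M}_c(G)$ are consecutive and no two indices of $\mathfrak{m}_c(G)$ are consecutive; since $\mathfrak{M}_c(G)\sqcup\mathfrak{m}_c(G)$ exhausts all $t+1$ colour classes and $|\mathsf{P}_G(C_0)|=|\mathsf{P}_G(C_t)|=1$ forbids $C_0,C_t\in\mathfrak{m}_c(G)$, the sequence $C_0,\ldots,C_t$ must strictly alternate as $M,m,M,m,\ldots,M$. Consequently $t$ is even, $t\ge4$, with $|\mathfrak{M}_c(G)|=t/2+1$ and $|\mathfrak{m}_c(G)|=t/2$.

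To exclude $t\ge6$ I would run the cascade
\[
G^{(k+1)}:=\mathscr{I}_{C_{2k+1}}\mathscr{D}_{C_{2k}}\bigl[G^{(k)}\bigr],\qquad G^{(0)}:=G,\qquad 0\le k\le t/2-1.
\]
Each deletion targets a max class of size at least $b+\nabla_c(G)\ge b+2$, so no hole is ever created; by Proposition~\ref{base} every intermediate graph is edge standardised with lambda chromatic number $t$, and the alternation propagates as $\mathfrak{M}_c(G^{(k)})=\{C_{2k},C_{2k+2},\ldots,C_t\}$ and $\mathfrak{m}_c(G^{(k)})=\{C_{2k+1},\ldots,C_{t-1}\}$. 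Applying Lemma~\ref{maxformula} (with all prohibited intersections still vanishing) and Lemma~\ref{minformula} (with disjoint union since the odd-indexed $\mathfrak{m}_c$ classes meet the even-indexed $\mathsf{P}(C_{2k+1})$ trivially) shows that the $k$-th pair changes the edge count by exactly $2k-1$, so
\[
|E(G^{(t/2)})|-|E(G)|=\sum_{k=0}^{t/2-1}(2k-1)=\frac{t(t-4)}{4},
\]
which is strictly positive whenever $t\ge6$. Since $|V(G^{(t/2)})|=|V(G)|$, this contradicts maximality of $|E(G)|$, forcing $t=4$; the alternation then delivers $|\mathfrak{M}_c(G)|=3$ and $|\mathfrak{m}_c(G)|=2$.

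Finally, for $t=4$ suppose $\nabla_c(G)\ge3$ and apply the six-step chain
\[
G^*:=\mathscr{I}_{C_1}\mathscr{I}_{C_3}\mathscr{I}_{C_1}\mathscr{D}_{C_4}\mathscr{D}_{C_2}\mathscr{D}_{C_0}[G].
\]
Each deletion reduces a max class from $b+\nabla_c(G)$ to $b+\nabla_c(G)-1\ge b+2\ge1$, so Proposition~\ref{base} again preserves the lambda chromatic number at $4$. A step-by-step application of Lemmas~\ref{maxformula} and \ref{minformula}, bookkeeping the evolving sets $\mathfrak{M}_c$ and $\mathfrak{m}_c$, gives the successive edge-count changes $-2,-1,0,+1,+2,+1$ and thus $|E(G^*)|-|E(G)|=1$; the final $+1$ requires $|C_4|\ge b+2$ in the last insertion, which is precisely where $\nabla_c(G)\ge3$ is used (for $\nabla_c(G)=2$ the same final step contributes $0$, so no contradiction arises). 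Since $|V(G^*)|=|V(G)|$, this contradicts the maximality of $|E(G)|$, and so $\nabla_c(G)=2$. The main obstacle throughout is the stage-by-stage bookkeeping of $\mathfrak{M}_c$, $\mathfrak{m}_c$, and the prohibited zones along both cascades: once the alternation (or its natural deformation in the $t=4$ case) is verified to propagate, the formulas of Lemmas~\ref{maxformula} and \ref{minformula} hand off the telescoping identities.
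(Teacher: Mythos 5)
Your proposal is correct. The first half --- substituting $|E(\mathscr{I}_{B}\mathscr{D}_{A}[G])|=|E(G)|-1$ into \eqref{star}, using $\mathfrak{m}_{c}(\mathscr{D}_{A}[G])=\mathfrak{m}_{c}(G)$ and $|\mathsf{P}_{\mathscr{D}_{A}[G]}(B)|=|\mathsf{P}_{G}(B)|$ to force $|\mathfrak{M}_{c}(G)|+|\mathfrak{m}_{c}(G)|=t+1$, $|\mathsf{P}_{G}(B)|=2$ and the vanishing of both intersections, hence the strict alternation of maximum and minimum classes --- is exactly the paper's opening observation. Where you genuinely differ is in how the two contradictions are closed. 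The paper performs only two deletion--insertion pairs, arrives at a graph $G''$ with $|E(G'')|=|E(G)|$ but $|\mathfrak{M}_{c}(G'')|+|\mathfrak{m}_{c}(G'')|\leq t-1$ together with $\nabla_{c}(G'')\geq2$, and then invokes Lemma~\ref{atmostone} to conclude $G''$ (hence $G$) is not extremal. You instead run longer explicit cascades --- $t/2$ pairs for $t\geq6$ and a six-step chain for $t=4$, $\nabla_{c}(G)\geq3$ --- whose telescoped edge counts I have verified: the per-stage gain $2k-1$ is right (deletion costs $t/2-k$, insertion gains $t/2+k-1$, with all prohibited intersections staying empty along the alternation), $\sum_{k=0}^{t/2-1}(2k-1)=t(t-4)/4>0$ for $t\geq6$, and the sequence $-2,-1,0,+1,+2,+1$ for the six-step chain is correct, with the final $+1$ indeed the only place $\nabla_{c}(G)\geq3$ enters. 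This yields a graph on the same vertex set with strictly more edges, a self-contained contradiction where the paper uses Lemma~\ref{atmostone} as the final hammer; the price is the heavier stage-by-stage bookkeeping, which you have carried out correctly.
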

\begin{proof}
Here we observe from \eqref{star} of Lemma~\ref{atmostone}, that for all $A\in\mathfrak{M}_{c}(G)$ and 
$B\in\mathfrak{m}_{c}(G)$, $|E(\mathscr{I}_{B}\mathscr{D}_{A}[G])|=|E(G)|-1$ if and only if 
$|\mathfrak{M}_{c}(G)\cap\mathsf{P}_{G}(A)|=0$, $|\mathfrak{m}_{c}(G)\cap\mathsf{P}_{G}(B)|=0$ for all 
$A\in\mathfrak{M}_{c}(G)$ and $B\in\mathfrak{m}_{c}(G)$, $|\mathfrak{M}_{c}(G)|+|\mathfrak{m}_{c}(G)|=t+1$ and 
$|\mathsf{P}_{G}(B)|=2$ for all $B\in\mathfrak{m}_{c}(G)$, i.e. if $C_{i}\in\mathfrak{m}_{c}(G)$, then $1\leq i\leq t-1$.   

Suppose $|\mathfrak{m}_{c}(G)|\geq3$. Since $|\mathfrak{M}_{c}(G)\cap\mathsf{P}_{G}(A)|=0$, 
$|\mathfrak{m}_{c}(G)\cap\mathsf{P}_{G}(B)|=0$ and $|\mathsf{P}_{G}(B)|=2$ for all 
$A\in\mathfrak{M}_{c}(G)$ and $B\in\mathfrak{m}_{c}(G)$, we have $|\mathfrak{M}_{c}(G)|=|\mathfrak{m}_{c}(G)|+1\geq4$. We choose $X\in\mathfrak{M}_{c}(G)$ and $Y\in\mathfrak{m}_{c}(G)$ and construct the graph $G':=\mathscr{I}_{Y}\mathscr{D}_{X}[G]$, then by assumption $|E(G')|=|E(G)|-1$. Also $|\mathfrak{m}_{c}(G')|=|\mathfrak{m}_{c}(G)|-1\geq2$ and 
$|\mathfrak{M}_{c}(G')|=|\mathfrak{M}_{c}(G)|-1\geq3$. Therefore $|\mathfrak{M}_{c}(G')|+|\mathfrak{m}_{c}(G')|=t-1$. We note that $|\mathfrak{M}_{c}(G')\cap\mathsf{P}_{G'}(A')|=0$, $|\mathfrak{m}_{c}(G')\cap\mathsf{P}_{G'}(B')|=0$ and 
$|\mathsf{P}_{G'}(B')|=2$, for all $A'\in\mathfrak{M}_{c}(G')$ and $B'\in\mathfrak{m}_{c}(G')$. Now we choose 
$X'\in\mathfrak{M}_{c}(G')$ and $Y'\in\mathfrak{m}_{c}(G')$ and construct the graph
$G'':=\mathscr{I}_{Y'}\mathscr{D}_{X'}[G']$. Therefore $|\mathfrak{M}_{c}(G'')|=|\mathfrak{M}_{c}(G')|-1\geq2$, 
$|\mathfrak{m}_{c}(G'')|=|\mathfrak{m}_{c}(G')|-1\geq1$ and $|\mathfrak{M}_{c}(G'')|+|\mathfrak{m}_{c}(G'')|=t-3$. Hence we conclude by Lemma~\ref{atmostone}, $G''$ can not contain maximum number of edges. But using \eqref{star} of Lemma~\ref{atmostone} we have $|E(G'')|=|E(G')|+1=E(G)$. This contradicts the assumption that $G$ contains maximum number of edges. Hence  $|\mathfrak{m}_{c}(G)|\leq2$. Now suppose $|\mathfrak{m}_{c}(G)|=1$. Since 
$|\mathfrak{M}_{c}(G)\cap\mathsf{P}_{G}(A)|=0$, $|\mathfrak{m}_{c}(G)\cap\mathsf{P}_{G}(B)|=0$ and 
$|\mathsf{P}_{G}(B)|=2$ for all $A\in\mathfrak{M}_{c}(G)$ and $B\in\mathfrak{m}_{c}(G)$, therefore 
we have $|\mathfrak{M}_{c}(G)|=2$, i.e. $t=2$. A contradiction arises since $t\geq3$. Hence 
$|\mathfrak{m}_{c}(G)|=2$. Consequently, $|\mathfrak{M}_{c}(G)|=3$ and $t=4$.

Suppose $\nabla_{c}(G)\geq3$. We choose $X\in\mathfrak{M}_{c}(G)$ and $Y\in\mathfrak{m}_{c}(G)$ and construct the graph $G':=\mathscr{I}_{Y}\mathscr{D}_{X}[G]$, then by assumption $|E(G')|=|E(G)|-1$. We note that, here 
$|\mathfrak{m}_{c}(G')|=|\mathfrak{m}_{c}(G)|-1=1$ and $|\mathfrak{M}_{c}(G')|=|\mathfrak{M}_{c}(G)|-1=2$. We choose $X'\in\mathfrak{M}_{c}(G')$ and $Y'\in\mathfrak{m}_{c}(G')$ and construct the graph 
$G'':=\mathscr{I}_{Y'}\mathscr{D}_{X'}[G']$. Therefore $|\mathfrak{M}_{c}(G'')|=|\mathfrak{M}_{c}(G')|-1=1$, 
$|\mathfrak{m}_{c}(G'')|=|\mathfrak{m}_{c}(G')|+1=2$ and $|\mathfrak{M}_{c}(G'')|+|\mathfrak{m}_{c}(G'')|=3=t-1$. Hence we conclude by Lemma~\ref{atmostone}, $G''$ can not contain maximum number of edges. Since 
$|\mathfrak{M}_{c}(G')\cap\mathsf{P}_{G'}(A')|=0$, $|\mathfrak{m}_{c}(G')\cap\mathsf{P}_{G'}(B')|=0$ and 
$|\mathsf{P}_{G'}(B')|=2$, for all $A'\in\mathfrak{M}_{c}(G')$ and $B'\in\mathfrak{m}_{c}(G')$, by using  \eqref{star} of Lemma~\ref{atmostone} we have $|E(G'')|=|E(G')|+1=E(G)$. This contradicts the assumption that $G$ contains maximum number of edges. Hence $\nabla_{c}(G)=2$.
\end{proof}

\section{The final arc : From Stationary Results to The Classification Results}

In this section, our main aim is to establish a classification result.  We classify all $n-$vertex graphs with lambda 
chromatic number $t\geq3$ and $n\geq t+1$.  

\begin{proposition}
Let $G$ be an graph with lambda chromatic number $t$ and $C_{0},\ldots,C_{t}$ be the coloured partition  of the vertex set of $G$ with respect to an optimal lambda colouring $c$. Then the mapping $\bar{c}:V(G)\rightarrow\{0,\ldots,t\}$ defined by $u\mapsto t-c(u)$ is an optimal lambda colouring of $G$ and 
$\bar{C}_{0},\ldots,\bar{C}_{t}$, where $\bar{C}_{i}=C_{t-i}$ for each integer $i$ with $0\leq i\leq t$, is the coloured partition  of the vertex set of $G$ with respect to $\bar{c}$.
\end{proposition}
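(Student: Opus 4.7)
The plan is to verify directly that $\bar{c}$ satisfies all the defining properties of an optimal lambda colouring, exploiting the obvious observation that the map $x \mapsto t - x$ on $\{0,\ldots,t\}$ is a bijection that preserves absolute differences.

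First I would check that $\bar{c}$ is a lambda colouring, i.e. that $|\bar{c}(u) - \bar{c}(v)| + \textnormal{d}_{G}(u,v) \geq 3$ for every pair $u, v \in V(G)$. This is immediate once one observes that
\begin{equation*}
|\bar{c}(u) - \bar{c}(v)| = |(t - c(u)) - (t - c(v))| = |c(v) - c(u)| = |c(u) - c(v)|,
\end{equation*}
so the inequality for $\bar{c}$ is identical to the corresponding inequality for $c$, which holds by hypothesis.

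Next I would confirm that $\bar{c}$ takes values in $\{0,\ldots,t\}$ with minimum $0$ (the convention from the introduction) and with maximum exactly $t$. Since $c$ is optimal, $\max_{u \in V(G)} c(u) = t$, so there exists $u^{*}$ with $c(u^{*}) = t$, giving $\bar{c}(u^{*}) = 0$; similarly the vertex realising $c(u) = 0$ maps to $\bar{c}(u) = t$. Thus the range of $\bar{c}$ lies in $\{0,\ldots,t\}$ with the minimum $0$ attained and the maximum equal to $t$. Since the lambda chromatic number of $G$ is $t$ and $\bar{c}$ is a lambda colouring attaining maximum value $t$, $\bar{c}$ is optimal.

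Finally, for the coloured partition, for each integer $i$ with $0 \leq i \leq t$ the class with respect to $\bar{c}$ is
\begin{equation*}
\bar{C}_{i} = \{u \in V(G) : \bar{c}(u) = i\} = \{u \in V(G) : t - c(u) = i\} = \{u \in V(G) : c(u) = t - i\} = C_{t-i},
\end{equation*}
which is the asserted identification. There is essentially no obstacle here: the whole statement is a symmetry observation, and every step reduces to an elementary algebraic manipulation together with the fact that $c$ is an optimal lambda colouring.
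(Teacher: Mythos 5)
Your proof is correct and rests on the same key observation as the paper's one-line argument, namely that $|\bar{c}(u)-\bar{c}(v)|+\textnormal{d}_{G}(u,v)=|c(u)-c(v)|+\textnormal{d}_{G}(u,v)$ for all $u,v\in V(G)$. You merely spell out the remaining routine checks (range, optimality, identification of the classes) that the paper leaves implicit.
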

\begin{proof}
The result follows immediately since $|c(u)-c(v)|+\di[G]{u,v}=|\bar{c}(u)-\bar{c}(v)|+\di[G]{u,v}$ for all $u,v\in V(G)$. 
\end{proof}

\begin{definition}
The aforementioned $\bar{c}: V(G)\rightarrow\{0,\ldots,t\}$ is called the \emph{dual} of the optimal lambda colouring $c$. The coloured partition $\bar{C}_{0},\ldots,\bar{C}_{t}$ is called the \emph{dual coloured partition} of  
$C_{0},\ldots,C_{t}$. 
\end{definition}

The following five results are most important ingredient to prove the classification results (Theorem~\ref{classification} and Corollary~\ref{classification-large}). Here we fix the notation 
\begin{equation*}
\K[G]{U_{0},\ldots,U_{t}}:=|\left\{(i,i+1):|U_{i}|=|U_{i+1}|=\max\{|U_{n}|:0\leq n\leq t\}\right\}|,
\end{equation*}
for a partition $U_{0},\ldots,U_{t}$ of the vertex set of $G$. Therefore $\K[G]{U_{0},\ldots,U_{t}}$ counts the number pairs of the form $(U_{i},U_{i+1})$, where $0\leq i\leq t-1$ (i.e. the \emph{consecutive pairs}) and 
$|U_{i}|=|U_{i+1}|=\underset{0\leq n\leq t}{\max}|U_{n}|$. 

\begin{theorem}\label{nabla_1}
Let $G$ be an edge standardised graph with lambda chromatic number $t\geq3$ and $C_{0},\ldots,C_{t}$ be the coloured partition  of the vertex set of $G$ with respect to the underlying optimal lambda colouring $c$. If $G$ contains $n$ vertices and $\nabla_{c}(G)=1$, then $n=|B|(t+1)+|\mathfrak{M}_{c}(G)|$ and 
\begin{equation*}
|E(G)|=|B|\binom{t}{2}+\binom{|\mathfrak{M}_{c}(G)|}{2}-\K[G]{C_{0},\ldots,C_{t}},
\end{equation*}
where $B\in\mathfrak{m}_{c}(G)$. Moreover, $|B|=\lfloor\frac{n}{t+1}\rfloor$ and 
$|\mathfrak{M}_{c}(G)|=n-(t+1)\lfloor\frac{n}{t+1}\rfloor$. Also $G$ contains maximum number of edges if and only if 
the value $\K[G]{C_{0},\ldots,C_{t}}$ is minimum over any equitable partition into (unequally sized) $t+1$ parts (subsets) of the vertex sets of all possible $n-$vertex graphs with lambda chromatic number $t$.
\end{theorem}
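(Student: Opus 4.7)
The plan is to derive the three numerical identities in order, then read off the ``if and only if'' as a bookkeeping consequence.

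First, I would observe that $\nabla_{c}(G)=1$ forces every colour class to have size either $|B|$ (for $C\in\mathfrak{m}_{c}(G)$) or $|B|+1$ (for $C\in\mathfrak{M}_{c}(G)$). Summing sizes over the $t+1$ classes gives
\begin{equation*}
n=(t+1-|\mathfrak{M}_{c}(G)|)\cdot|B|+|\mathfrak{M}_{c}(G)|\cdot(|B|+1)=(t+1)|B|+|\mathfrak{M}_{c}(G)|.
\end{equation*}
Since $\mathfrak{m}_{c}(G)$ is non-empty we have $0\leq|\mathfrak{M}_{c}(G)|\leq t$, so the above is precisely the division algorithm for $n$ by $t+1$, pinning down $|B|=\lfloor n/(t+1)\rfloor$ and $|\mathfrak{M}_{c}(G)|=n-(t+1)\lfloor n/(t+1)\rfloor$.

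Next, I would use that $G$ is edge standardised to invoke Theorem~\ref{notation_M}, obtaining $|E(G)|=\M{C_{0},\ldots,C_{t}}=\sum_{0\leq i\leq t-2}\sum_{i+2\leq j\leq t}\min\{|C_{i}|,|C_{j}|\}$. Writing $m:=|\mathfrak{M}_{c}(G)|$, I would split the admissible index pairs $(i,j)$ (those with $i<j$ and $j-i\geq 2$) into two categories: pairs with both $C_{i},C_{j}\in\mathfrak{M}_{c}(G)$ (each contributing $|B|+1$) and the rest (each contributing $|B|$). The total number of admissible pairs is $\binom{t+1}{2}-t=\binom{t}{2}$; among all pairs $(i,j)$ with $i<j$ and both classes in $\mathfrak{M}_{c}(G)$ there are $\binom{m}{2}$ altogether, of which precisely the consecutive ones are counted by $\K[G]{C_{0},\ldots,C_{t}}$. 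A short arithmetic then gives
\begin{align*}
|E(G)|&=(|B|+1)\left(\binom{m}{2}-\K[G]{C_{0},\ldots,C_{t}}\right)+|B|\left(\binom{t}{2}-\binom{m}{2}+\K[G]{C_{0},\ldots,C_{t}}\right)\\
&=|B|\binom{t}{2}+\binom{m}{2}-\K[G]{C_{0},\ldots,C_{t}}.
\end{align*}

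Finally, with $n$ and $t$ fixed, the first two steps make $|B|$ and $m$ constants, so the quantity $|B|\binom{t}{2}+\binom{m}{2}$ is independent of the particular graph, and the displayed formula therefore reduces the maximisation of $|E(G)|$ to the minimisation of $\K[G]{C_{0},\ldots,C_{t}}$ across all $n$-vertex graphs with lambda chromatic number $t$ admitting an (unequal) equitable coloured partition. The comparison extends from edge standardised representatives to arbitrary such graphs by Proposition~\ref{es_base}, which preserves the coloured partition and cannot decrease the edge count; the only delicacy, rather than a genuine obstacle, is keeping this translation between the ``over any equitable partition'' quantifier in the statement and the edge standardised regime in which the counting identity holds verbatim.
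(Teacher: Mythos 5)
Your proof is correct and takes essentially the same approach as the paper's: where the paper organises the edge count structurally, splitting the vertex set into $|B|$ layers each inducing a copy of $\mathbb{G}_{t}$ plus the surplus vertices of the maximum classes inducing a complete graph minus the $\K[G]{C_{0},\ldots,C_{t}}$ consecutive pairs, you sum $\min\{|C_{i}|,|C_{j}|\}$ over the $\binom{t}{2}$ admissible index pairs directly --- the same computation in different clothing. The derivation of $n=(t+1)|B|+|\mathfrak{M}_{c}(G)|$ via the division algorithm and the concluding equivalence are handled just as in the paper.
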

\begin{proof}
Let $B\in\mathfrak{m}_{c}(G)$. Here the vertex set of the edge standardised graph $G$ admits a partition into two parts (subsets) $U_{1}$ and $U_{2}$ such that $\ed[G]{U_{1},U_{2}}=0$, where
\begin{equation*}
U_{1}:=\left(\overset{t}{\underset{m=0}\sqcup}\{u^{m}_{i}:1\leq i\leq|B|\}\right)\textup{ and }
U_{2}:=\left(\underset{C_{m}\in\mathfrak{M}_{c}(G)}{\sqcup}\{u^{m}_{i}:i=|B|+1\}\right).
\end{equation*}
Hence the formula for $n$ holds. And to count the edges it is enough to count the edges of the subgraphs induced 
by the subsets $U_{1}$ and $U_{2}$. Since $G$ is an edge standardised graph, the subgraph induced by the subset 
$U_{1}$ of $V(G)$ is $|B|$ disjoint copies of $\mathbb{G}_{t}$. Therefore such subset yields $|B|\binom{t}{2}$ edges. To calculate the remaining edges we note that $G$ is an edge standardised graph. So, the subgraph induced by the 
subset $U_{2}$ of $V(G)$ is the \emph{almost complete graph} on $|\mathfrak{M}_{c}(G)|$ vertices i.e complete 
graph on $|\mathfrak{M}_{c}(G)|$ vertices with $\K[G]{C_{0},\ldots,C_{t}}$ deleted edges. Therefore there are 
$\left[\binom{|\mathfrak{M}_{c}(G)|}{2}-\K[G]{C_{0},\ldots,C_{t}}\right]$ additional edges.

For the next part we note that $\nabla_{c}(G)=1$ implies 
$1\leq|\mathfrak{M}_{c}(G)|\leq t\Leftrightarrow\frac{1}{t+1}\leq \frac{n}{t+1}-|B|\leq\frac{t}{t+1}$
Consequently, $|B|=\lfloor\frac{n}{t+1}\rfloor$ and $|\mathfrak{M}_{c}(G)|=n-(t+1)\lfloor\frac{n}{t+1}\rfloor$. The remaining part follows immediately from the edge formula.  
\end{proof}

\begin{theorem}\label{t=3}
Let $G$ be an edge standardised graph with lambda chromatic number $3$ and $C_{0},C_{1},C_{2},C_{3}$ be the coloured partition of the vertex set of $G$ with respect to the underlying optimal lambda colouring $c$. Then $G$ contains maximum number of edges and $\nabla_{c}(G)\geq2$ if and only if the coloured partition or its  respective dual partition satisfies one of the following four types of properties.
\begin{enumerate}[\normalfont(a)]
\item $\mathfrak{M}_{c}(G)=\{C_{0},C_{3}\}$, $\mathfrak{m}_{c}(G)=\{C_{2}\}$ and 
$|C_{0}|=|C_{1}|+1=|C_{2}|+2=|C_{3}|$.
\item $\mathfrak{M}_{c}(G)=\{C_{1},C_{3}\}$, $\mathfrak{m}_{c}(G)=\{C_{2}\}$ and 
$|C_{0}|+1=|C_{1}|=|C_{2}|+2=|C_{3}|$.
\item $\mathfrak{M}_{c}(G)=\{C_{0},C_{2},C_{3}\}$, $\mathfrak{m}_{c}(G)=\{C_{1}\}$ and 
$|C_{0}|=|C_{1}|+2=|C_{2}|=|C_{3}|$.
\item $\mathfrak{M}_{c}(G)=\{C_{0},C_{2},C_{3}\}$, $\mathfrak{m}_{c}(G)=\{C_{1}\}$ and 
$|C_{0}|=|C_{1}|+3=|C_{2}|=|C_{3}|$.
\end{enumerate}
\end{theorem}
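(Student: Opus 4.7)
The proof splits into necessity and sufficiency; the principal obstacle is the necessity bookkeeping, where Propositions~\ref{sum_t}, \ref{sum_t+1_gain-0}, \ref{sum_t+1_gain-1}, and~\ref{nablageq2} constrain $\mathfrak{M}_c(G)$, $\mathfrak{m}_c(G)$, and $\nabla_c(G)$ in an interlocking way.

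\emph{Necessity.} Assume $G$ is extremal with $\nabla_c(G)\geq 2$. Lemma~\ref{atmostone} forces $|\mathfrak{M}_c(G)|+|\mathfrak{m}_c(G)|\geq t=3$, and since only four colour classes exist this sum is $3$ or $4$. If the sum is $3$, Proposition~\ref{sum_t} gives $|\mathfrak{m}_c(G)|=1$, $\nabla_c(G)=2$, and $\mathfrak{m}_c(G)\in\{\{C_1\},\{C_2\}\}$, while Proposition~\ref{nablageq2}(a) forbids any two consecutive classes in $\mathfrak{M}_c(G)$. Enumerating the three two-element subsets of $\{C_0,C_1,C_2,C_3\}\setminus\mathfrak{m}_c(G)$ in each subcase leaves exactly $\mathfrak{M}_c(G)=\{C_0,C_3\}$ (case~(a)) and $\{C_1,C_3\}$ (case~(b)) when $\mathfrak{m}_c(G)=\{C_2\}$, and symmetrically the duals when $\mathfrak{m}_c(G)=\{C_1\}$.

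If the sum is $4$, then \eqref{star} of Lemma~\ref{atmostone} gives
\begin{equation*}
|E(\mathscr{I}_B\mathscr{D}_A[G])|-|E(G)|=1-|\mathsf{P}_G(B)|+|\mathfrak{M}_c(G)\cap\mathsf{P}_G(A)|+|\mathfrak{m}_c(G)\cap\mathsf{P}_G(B)|\in\{-1,0\}
\end{equation*}
for every admissible pair $(A,B)$, since $|\mathsf{P}_G(B)|\leq 2$ and extremality caps the difference at $0$. If some pair realises $0$, Proposition~\ref{sum_t+1_gain-0} (specialised to $t=3$) yields $|\mathfrak{m}_c(G)|=1$, $|\mathfrak{M}_c(G)|=3$, $\mathfrak{m}_c(G)\in\{\{C_1\},\{C_2\}\}$, and $\nabla_c(G)\in\{2,3\}$. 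With $\mathfrak{m}_c(G)=\{C_1\}$ this forces $\mathfrak{M}_c(G)=\{C_0,C_2,C_3\}$, and the two values of $\nabla_c(G)$ produce cases~(c) and~(d) respectively; the choice $\mathfrak{m}_c(G)=\{C_2\}$ yields their duals. If every pair gives $-1$, Proposition~\ref{sum_t+1_gain-1} forces $t=4$, contradicting $t=3$.

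\emph{Sufficiency.} The formula $\M{C_0,C_1,C_2,C_3}=\min(|C_0|,|C_2|)+\min(|C_0|,|C_3|)+\min(|C_1|,|C_3|)$ together with Theorem~\ref{notation_M} yields $|E(G)|=3k-3$ in each of (a),~(b),~(d) on $n=4k-3$ vertices, and $|E(G)|=3k-2$ in (c) on $n=4k-2$ vertices. Theorem~\ref{nabla_1} produces equitable partitions on the same vertex counts attaining exactly these totals (with $|\mathfrak{M}_c|=1$ when $n=4k-3$, and with $\K[G]{C_0,\ldots,C_t}=0$ when $n=4k-2$). An elementary upper bound on $\M{\cdot}$ subject to the size constraint confirms these values are the global maxima, so each of (a)--(d) is extremal. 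Finally the duality $c\leftrightarrow\bar c$ preserves both $\M{\cdot}$ and $\nabla_c$, transferring extremality to the dual configurations.
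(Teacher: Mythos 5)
Your proof is correct and follows essentially the same route as the paper: necessity via Lemma~\ref{atmostone} together with Propositions~\ref{sum_t}, \ref{nablageq2}, \ref{sum_t+1_gain-0} and \ref{sum_t+1_gain-1}, and sufficiency by matching the edge count of each configuration against the equitable benchmark supplied by Theorem~\ref{nabla_1}. The only differences are presentational: you rule out the ``every pair loses an edge'' subcase explicitly through Proposition~\ref{sum_t+1_gain-1} (the paper leaves this implicit for $t=3$), and you verify sufficiency by directly computing $\M{C_{0},\ldots,C_{3}}$ and comparing totals rather than exhibiting the edge-count-preserving transformations $\mathscr{I}_{B}\mathscr{D}_{A}$ onto an equitable graph, but both amount to the same comparison.
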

\begin{proof}
If $G$ contains maximum number of edges, $|\mathfrak{M}_{c}(G)|+|\mathfrak{m}_{c}(G)|=t=3$ and 
$\nabla_{c}(G)\geq2$, then conclusion (a) and (b) directly follow from Proposition~\ref{sum_t} and Proposition~\ref{nablageq2}. For the converse part of (a), let $G$ be the same as mentioned in the statement with additional property that (a) holds. Then we construct, $G':=\mathscr{I}_{C_{2}}\mathscr{D}_{C_{3}}[G]$. Here $\nabla_{c}(G')=1$ and $|\mathfrak{M}_{c}(G')|=1$. Hence $\K[G']{C'_{0},C'_{1},C'_{2},C'_{3}}=0$, where $C'_{0},C'_{1},C'_{2},C'_{3}$ is the transformed (coloured) partition of $C_{0},C_{1},C_{2},C_{3}$. 
Therefore using Theorem~\ref{nabla_1}, we have $G'$ contains maximum number of edges. We note that $|E(G)|=|E(G')|$. 
Hence $G$ contains maximum number of edges. For the converse part of (b), we assume $G$ is the same as 
mentioned in the statement with additional property that (b) holds. Then with a same construction and similar 
argument we have such $G$ contains maximum number of edges.

Similarly if $G$ contains maximum number of edges, $|\mathfrak{M}_{c}(G)|+|\mathfrak{m}_{c}(G)|=t+1=4$ and $\nabla_{c}(G)\geq2$, then conclusion (c) and (d) directly follow from Proposition~\ref{sum_t+1_gain-0}. Conversely let $G$ be the same as mentioned in the statement with additional property that (c) holds. Then $\nabla_{c}(G)=2$ and we let $G':=\mathscr{I}_{C_{1}}\mathscr{D}_{C_{3}}[G]$. Using Theorem~\ref{nabla_1} and a similar argument as before we have $G$ contains maximum number of edges. Conversely let $G$ be the same as mentioned in the statement with additional property that (d) holds. Then $\nabla_{c}(G)=3$ and we let 
$G':=\mathscr{I}_{C_{1}}\mathscr{D}_{C_{2}}\mathscr{I}_{C_{1}}\mathscr{D}_{C_{3}}[G]$. 
Using Theorem~\ref{nabla_1} and a similar argument as before we have $G$ contains maximum number of edges.
\end{proof}

\begin{theorem}\label{t=4necessary}
Let $G$ be an edge standardised graph with lambda chromatic number $4$ and $C_{0},\ldots,C_{4}$ be the coloured partition of the vertex set of $G$ with respect to the underlying optimal lambda colouring $c$. If $G$ contains maximum number of edges and $\nabla_{c}(G)\geq2$ then there are only following three types of coloured partitions 
or their respective dual coloured partitions.
\begin{enumerate}[\normalfont(a)]
\item $\mathfrak{M}_{c}(G)=\{C_{0},C_{2},C_{4}\}$, $\mathfrak{m}_{c}(G)=\{C_{1}\}$ and 
$|C_{0}|=|C_{1}|+2=|C_{2}|=|C_{3}|+1=|C_{4}|$.
\item $\mathfrak{M}_{c}(G)=\{C_{0},C_{2},C_{4}\}$, $\mathfrak{m}_{c}(G)=\{C_{1},C_{3}\}$ and 
$|C_{0}|=|C_{1}|+2=|C_{2}|=|C_{3}|+2=|C_{4}|$.
\item $\mathfrak{M}_{c}(G)=\{C_{0},C_{1},C_{3},C_{4}\}$, $\mathfrak{m}_{c}(G)=\{C_{2}\}$ and 
$|C_{0}|=|C_{1}|=|C_{2}|+2=|C_{3}|=|C_{4}|$.
\end{enumerate}
\end{theorem}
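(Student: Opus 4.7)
The plan is to use Lemma~\ref{atmostone} as the starting point and then dispatch the two resulting cases via the propositions already developed. Since $G$ contains the maximum number of edges and $\nabla_{c}(G)\geq2$, Lemma~\ref{atmostone} forces
\begin{equation*}
|\mathfrak{M}_{c}(G)|+|\mathfrak{m}_{c}(G)|\in\{t,t+1\}=\{4,5\}.
\end{equation*}
I would split the argument along these two possibilities, and for the second value further subdivide according to whether there exist $A\in\mathfrak{M}_{c}(G)$ and $B\in\mathfrak{m}_{c}(G)$ with $|E(\mathscr{I}_{B}\mathscr{D}_{A}[G])|=|E(G)|$ (the ``gain $0$'' regime) or whether $|E(\mathscr{I}_{B}\mathscr{D}_{A}[G])|=|E(G)|-1$ for every such pair (the ``gain $-1$'' regime), as allowed by the remark preceding Proposition~\ref{sum_t}.

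If the sum equals $t=4$, Proposition~\ref{sum_t} immediately gives $|\mathfrak{m}_{c}(G)|=1$, $\nabla_{c}(G)=2$, and $\mathfrak{m}_{c}(G)=\{C_{i}\}$ with $1\leq i\leq3$, while $|\mathfrak{M}_{c}(G)|=3$. Applying the first clause of Proposition~\ref{nablageq2} (valid since the sum is exactly $t$) gives $|\mathfrak{M}_{c}(G)\cap\mathsf{P}_{G}(A)|=0$ for every $A$, i.e.\ no two consecutive classes lie in $\mathfrak{M}_{c}(G)$. Selecting $3$ non-consecutive classes out of $\{C_{0},\dots,C_{4}\}$ is possible only as $\{C_{0},C_{2},C_{4}\}$. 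The remaining class, which must be $\mathfrak{m}_{c}(G)$, is either $C_{1}$ or $C_{3}$; these two choices are exchanged by the dual colouring, and the former is case~(a). The sizes then follow because $\nabla_{c}(G)=2$ fixes $|C_{0}|=|C_{2}|=|C_{4}|=|C_{1}|+2$, and since $C_{3}$ is in neither $\mathfrak{M}_{c}(G)$ nor $\mathfrak{m}_{c}(G)$ it must satisfy $|C_{1}|<|C_{3}|<|C_{0}|$, forcing $|C_{3}|=|C_{0}|-1$.

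If the sum equals $t+1=5$ and we are in the gain-$0$ regime, Proposition~\ref{sum_t+1_gain-0} applied with $t=4$ yields $|\mathfrak{m}_{c}(G)|=1$, $|\mathfrak{M}_{c}(G)|=4$, $\nabla_{c}(G)=2$, and $\mathfrak{m}_{c}(G)=\{C_{i}\}$ with $1\leq i\leq3$. The purpose of the second clause of Proposition~\ref{nablageq2} is now to eliminate $i\in\{1,3\}$: for $i=1$ the choice $A=C_{3}\in\mathfrak{M}_{c}(G)$ has $\mathsf{P}_{G}(C_{3})=\{C_{2},C_{4}\}\subset\mathfrak{M}_{c}(G)$, producing $1+2+0=3>2=|\mathsf{P}_{G}(C_{1})|$, violating \eqref{starstar}; by the duality symmetry this also rules out $i=3$. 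Hence $i=2$, which is case~(c), and $\nabla_{c}(G)=2$ determines the sizes exactly as stated. If instead we are in the gain-$(-1)$ regime, Proposition~\ref{sum_t+1_gain-1} delivers $|\mathfrak{m}_{c}(G)|=2$, $|\mathfrak{M}_{c}(G)|=3$, $\nabla_{c}(G)=2$, with $C_{i}\in\mathfrak{m}_{c}(G)\Rightarrow 1\leq i\leq3$ and with neither family containing two consecutive classes. Inside $\{C_{1},C_{2},C_{3}\}$ the only non-consecutive pair is $\{C_{1},C_{3}\}$, so $\mathfrak{m}_{c}(G)=\{C_{1},C_{3}\}$ and $\mathfrak{M}_{c}(G)=\{C_{0},C_{2},C_{4}\}$, giving case~(b) with the cardinalities determined again by $\nabla_{c}(G)=2$.

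The routine part of the proof is a systematic case split; the only step that requires genuine combinatorial bookkeeping is the gain-$0$ subcase, where one must carefully evaluate $|\mathfrak{M}_{c}(G)\cap\mathsf{P}_{G}(A)|$ for every $A\in\mathfrak{M}_{c}(G)$ to rule out $\mathfrak{m}_{c}(G)=\{C_{1}\}$ and $\mathfrak{m}_{c}(G)=\{C_{3}\}$ using \eqref{starstar}. The rest is essentially an exercise in matching the constraints produced by Propositions~\ref{sum_t}, \ref{sum_t+1_gain-0}, and \ref{sum_t+1_gain-1} with the admissible configurations of five colour classes, together with the observation that $\nabla_{c}(G)=2$ pins down the remaining class sizes in each surviving configuration.
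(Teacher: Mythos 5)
Your proposal is correct and takes essentially the same route as the paper, whose proof of this theorem consists of exactly the same case split (sum $=t$ handled by Proposition~\ref{sum_t} with Proposition~\ref{nablageq2}, and sum $=t+1$ handled by Propositions~\ref{sum_t+1_gain-0} and~\ref{sum_t+1_gain-1} with Proposition~\ref{nablageq2}) stated without further elaboration. Your write-up merely makes explicit the bookkeeping the paper calls ``directly follows'': the non-consecutiveness forcing $\mathfrak{M}_{c}(G)=\{C_{0},C_{2},C_{4}\}$, the use of \eqref{starstar} to exclude $\mathfrak{m}_{c}(G)=\{C_{1}\}$ and $\{C_{3}\}$ in the gain-$0$ regime, and the determination of the class sizes from $\nabla_{c}(G)=2$ — all of which check out.
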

\begin{proof}
If $G$ contains maximum number of edges, $|\mathfrak{M}_{c}(G)|+|\mathfrak{m}_{c}(G)|=t=4$ and 
$\nabla_{c}(G)\geq2$, then the conclusion (a) directly follows from Proposition~\ref{sum_t} and Proposition~\ref{nablageq2}.

If $G$ contains maximum number of edges, $|\mathfrak{M}_{c}(G)|+|\mathfrak{m}_{c}(G)|=t+1=5$ and 
$\nabla_{c}(G)\geq2$, then the conclusion (b) and (c) directly follow from Proposition~\ref{sum_t+1_gain-0}, Proposition~\ref{sum_t+1_gain-1} and Proposition~\ref{nablageq2}.  
\end{proof}

\begin{theorem}\label{t=4sufficint}
Let $G$ be an edge standardised graph with lambda chromatic number $4$ and $C_{0},\ldots,C_{4}$ be the coloured partition of the vertex set of $G$ with respect to the underlying optimal lambda colouring $c$. 
If the coloured partition or its respective dual coloured partition satisfies one of the following two properties, 
then $G$ contains maximum number of edges.
\begin{enumerate}[\normalfont(a)]
\item $\mathfrak{M}_{c}(G)=\{C_{0},C_{2},C_{4}\}$, $\mathfrak{m}_{c}(G)=\{C_{1}\}$ and 
$|C_{0}|=|C_{1}|+2=|C_{2}|=|C_{3}|+1=|C_{4}|$.
\item $\mathfrak{M}_{c}(G)=\{C_{0},C_{2},C_{4}\}$, $\mathfrak{m}_{c}(G)=\{C_{1},C_{3}\}$ and 
$|C_{0}|=|C_{1}|+2=|C_{2}|=|C_{3}|+2=|C_{4}|$.
\end{enumerate}
\end{theorem}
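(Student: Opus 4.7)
The plan is to show, in each of cases (a) and (b), that $G$ has the same number of edges as an edge standardised graph $G'$ with the same vertex set, with $\nabla_c(G')=1$, and with $\K[G']{C'_0,\ldots,C'_4}=0$. Theorem~\ref{nabla_1} will then imply that $G'$ attains the maximum number of edges over all $n$-vertex graphs with lambda chromatic number $4$, whence $|E(G)|=|E(G')|$ is also maximum.

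For case (a), where $(|C_0|,\ldots,|C_4|)=(l,l-2,l,l-1,l)$, I would take $G':=\mathscr{I}_{C_1}\mathscr{D}_{C_2}[G]$. Proposition~\ref{base} ensures that $G'$ is an edge standardised graph with lambda chromatic number $4$, since the deletion of $u^2_l$ does not produce two consecutive holes. Lemma~\ref{maxformula}, with $|\mathfrak{M}_c(G)\cap\mathsf{P}_G(C_2)|=0$, yields an edge decrease of $2$, while Lemma~\ref{minformula}, with $|\mathfrak{m}_c(\mathscr{D}_{C_2}[G])\cup\mathsf{P}_{\mathscr{D}_{C_2}[G]}(C_1)|=3$, yields an edge increase of $2$; hence $|E(G')|=|E(G)|$. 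The new partition $(l,l-1,l-1,l-1,l)$ has $\nabla_c(G')=1$ and $\mathfrak{M}_c(G')=\{C_0,C_4\}$, which is not a consecutive pair, so $\K[G']{C'_0,\ldots,C'_4}=0$.

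For case (b), where $(|C_0|,\ldots,|C_4|)=(l,l-2,l,l-2,l)$, Proposition~\ref{sum_t+1_gain-1} indicates that one deletion-insertion round decreases the edge count by $1$, so I would perform two rounds: set $G_1:=\mathscr{I}_{C_3}\mathscr{D}_{C_2}[G]$ and $G':=\mathscr{I}_{C_1}\mathscr{D}_{C_0}[G_1]$. Tracking the edge count via Lemmas~\ref{maxformula} and \ref{minformula} through the four sub-steps yields net changes $-2,+1,-1,+2$, summing to $0$. The final partition $(l-1,l-1,l-1,l-1,l)$ has $|\mathfrak{M}_c(G')|=1$, so trivially $\K[G']{C'_0,\ldots,C'_4}=0$.

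The dual cases of (a) and (b) are handled identically, with the construction applied to the dual coloured partition $\bar C_0,\ldots,\bar C_4$, which underlies the same graph. The main obstacle is purely combinatorial bookkeeping at each intermediate step: one must verify that no two consecutive holes arise (so Proposition~\ref{base} applies and the lambda chromatic number is preserved), and one must correctly identify $\mathfrak{M}_c$, $\mathfrak{m}_c$ and the prohibited zones for the intermediate graph in order to apply the edge-count lemmas correctly.
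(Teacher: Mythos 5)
Your proposal is correct and follows essentially the same strategy as the paper's proof: transform $G$ by edge deletion--insertion moves into an edge standardised graph $G'$ with $\nabla_{c}(G')=1$ and $\K[G']{C'_{0},\ldots,C'_{4}}=0$ while preserving the edge count, then invoke Theorem~\ref{nabla_1}. The paper merely makes different (equally valid) choices of operators, namely $\mathscr{I}_{C_{1}}\mathscr{D}_{C_{4}}[G]$ for (a) and $\mathscr{I}_{C_{3}}\mathscr{D}_{C_{2}}\mathscr{I}_{C_{1}}\mathscr{D}_{C_{4}}[G]$ for (b), reaching the partitions $(l,l-1,l,l-1,l-1)$ and $(l,l-1,l-1,l-1,l-1)$; your bookkeeping of the intermediate $\mathfrak{M}_{c}$, $\mathfrak{m}_{c}$ and prohibited zones checks out.
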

\begin{proof}
We assume (a) holds. Let $G':=\mathscr{I}_{C_{1}}\mathscr{D}_{C_{4}}[G]$. Here $\nabla_{c}(G')=1$ and 
$|\mathfrak{M}_{c}(G')|=2$. Also \\$\K[G']{C'_{0},\ldots,C'_{4}}=0$, where $C'_{0},\ldots,C'_{4}$ is the 
transformed (coloured) partition of $C_{0},\ldots,C_{4}$. Therefore using Theorem~\ref{nabla_1}, we have 
$G'$ contains maximum number of edges. We note that $|E(G)|=|E(G')|$. Hence such $G$ contains maximum number 
of edges and the result follow for this case.

We assume (b) holds. Let $G':=\mathscr{I}_{C_{3}}\mathscr{D}_{C_{2}}\mathscr{I}_{C_{1}}\mathscr{D}_{C_{4}}[G]$. 
Here $\nabla_{c}(G')=1$ and $|\mathfrak{M}_{c}(G')|=1$. Hence $\K[G']{C'_{0},\ldots,C'_{4}}=0$, where 
$C'_{0},\ldots,C'_{4}$ is the transformed (coloured) partition of $C_{0},\ldots,C_{4}$. Therefore using 
Theorem~\ref{nabla_1}, we have $G'$ contains maximum number of edges. We note that $|E(G)|=|E(G')|$. Hence such 
$G$ contains maximum number of edges and the result follow for this case.
\end{proof}

\begin{remark}
The converse part of Theorem~\ref{t=4necessary} is not true. Suppose $G$ is a graph, which satisfies the 
conditions mention in the statement of Theorem~\ref{t=4necessary} and in addition (c) holds, then $G$ 
does not contain maximum number of edges. 

Technically, it is due to for each $X\in\mathfrak{M}_{c}(G)$,  $G':=\mathscr{I}_{C_{2}}\mathscr{D}_{X}[G]$ implies 
$|\mathfrak{M}_{c}(G')|=3$ and there exists exactly one integer $i$, with $i=0$ or $i=3$, such that $\{C'_{i},C'_{i+1}\}\subset\mathfrak{M}_{c}(G')$. Hence $\K[G']{C'_{0},\ldots,C'_{4}}=1$.
Suppose $H$ is a graph with lambda chromatic number $t=4$, $\nabla_{c}(H)=1$, 
$|\mathfrak{M}_{c}(H)|=3$ with respect to the underlying optimal lambda colouring $c:V(H)\rightarrow\{0,\ldots,4\}$. 
Then by using Theorem~\ref{nabla_1}, such $H$ contains maximum number of edges if and only if 
the value of $\K[H]{C_{0},\ldots,C_{4}}$ is $0$. Here $G'$ satisfies $t=4$, $\nabla_{c}(G')=1$, 
$|\mathfrak{M}_{c}(G')|=3$ but $\K[G']{C'_{0},\ldots,C'_{4}}=1$. Therefore such $G'$ can not contain 
maximum number of edges. We note that $|E(G)|=|E(G')|$. Hence such $G$ can not contain maximum number of edges.

Therefore any edge standardised graph satisfying the (stationary) conditions mentioned in (c) of Theorem~\ref{t=4necessary}  does not contain maximum number of edges. 
\end{remark}

To propose the classification results, we can not restrict ourselves only to edge standardised graphs. So we need to get rid of the ``edge distribution'' related restriction that makes a graph edge standardised. Henceforth, the graphs 
are not necessarily edge standardised. 

\begin{theorem}\label{t+1_divides_n}
Let $G$ be a graph with lambda chromatic number $t\geq3$ and $C_{0},\ldots,C_{t}$ be the coloured partition  of the vertex set of $G$ with respect to an optimal lambda colouring $c$. If $G$ contains $n\geq t+1$ vertices, where $n$ is a multiple of $t+1$, then $G$ contains maximum number of edges if and only if 
$G$ is a member graph of $\mathsf{G}(t,\frac{n}{t+1})$. 
\end{theorem}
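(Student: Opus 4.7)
The plan is to establish the forward direction (extremality implies membership in $\mathsf{G}(t,l)$) as the substantive content and to deduce the converse from it. Throughout, fix an $n$-vertex graph $G$ of lambda chromatic number $t$, an optimal lambda colouring $c$ with coloured partition $C_0,\ldots,C_t$, and set $l = n/(t+1)$.

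First I would apply edge standardisation: by Proposition~\ref{es_base}, $\mathscr{S}_c[G]$ inherits the coloured partition, the lambda chromatic number and the value of $\nabla_c$ from $G$, while $|E(\mathscr{S}_c[G])|\geq|E(G)|$. If $G$ is extremal then so is $\mathscr{S}_c[G]$, so henceforth I may assume the extremal graph is edge standardised.

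The crux is then to show $\nabla_c(\mathscr{S}_c[G]) = 0$. Lemma~\ref{atmostone} affords two alternatives. In the first, $\nabla_c \leq 1$; the case $\nabla_c = 1$ would require $k$ classes of size $l'+1$ and $t+1-k$ classes of size $l'$ for some $1\leq k\leq t$, giving $n=(t+1)l'+k$, which is incompatible with $(t+1)\mid n$. So $\nabla_c = 0$. In the second alternative, $\nabla_c \geq 2$ and $|\mathfrak{M}_c|+|\mathfrak{m}_c|\in\{t,t+1\}$; Propositions~\ref{sum_t}, \ref{sum_t+1_gain-0} and \ref{sum_t+1_gain-1} then confine $t$ to $\{3,4\}$ and determine the multiset of class sizes up to a common shift $s$. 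A short residue check (for $t=3$ the admissible profiles have $n\in\{4s+5,\,4s+6,\,4s+9\}$, residues $1,2,1$ modulo $4$; for $t=4$ they have $n\in\{5s+7,\,5s+8,\,5s+6\}$, residues $2,3,1$ modulo $5$) shows that no admissible configuration satisfies $(t+1)\mid n$. Hence $\nabla_c(\mathscr{S}_c[G]) = 0$, and since $G$ and $\mathscr{S}_c[G]$ share the partition, $|C_0|=\cdots=|C_t|=l$ in $G$ too.

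With all classes of size $l$, Theorem~\ref{notation_M}(a) gives $\M{C_0,\ldots,C_t}=\binom{t}{2}l$, and extremality of $G$ forces $|E(G)|=\binom{t}{2}l$. The equality clause of Theorem~\ref{notation_M}(b) then asserts that each $x\in C_i$ has exactly one neighbour in each $C_j$ with $|i-j|\geq 2$, which is precisely the defining property of $\mathsf{G}(t,l)$ (under $V_i=C_i$), yielding $G\in\mathsf{G}(t,l)$. For the converse direction, any $G^*\in\mathsf{G}(t,l)$ has $n$ vertices, lambda chromatic number $t$ and exactly $\binom{t}{2}l$ edges by construction (cf.\ the theorem immediately following Theorem~\ref{lambdacolour}); and since $n$-vertex graphs of lambda chromatic number $t$ do exist (for instance, $l$ vertex-disjoint copies of $\mathbb{G}_t$), the forward direction pins the maximum edge count at $\binom{t}{2}l$, so $G^*$ attains it. The main obstacle I anticipate is the bookkeeping in the $\nabla_c\geq 2$ branch: although the three restricting propositions leave only a handful of candidate size profiles, each must be ruled out separately by the residue computation above. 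That this elimination depends crucially on $(t+1)\mid n$ is the structural point---without the divisibility hypothesis, the extremal graphs of Theorems~\ref{t=3} and \ref{t=4necessary} would survive.
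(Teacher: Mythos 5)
Your proposal is correct and follows essentially the same route as the paper: edge standardise via Proposition~\ref{es_base}, rule out $\nabla_{c}=1$ and (via Propositions~\ref{sum_t}, \ref{sum_t+1_gain-0}, \ref{sum_t+1_gain-1}, which are exactly what underlie the paper's Theorems~\ref{t=3} and \ref{t=4necessary}) the $\nabla_{c}\geq2$ profiles by the same residue computation modulo $t+1$, then read off membership in $\mathsf{G}(t,\frac{n}{t+1})$ from Theorem~\ref{notation_M}(b). The only cosmetic difference is in the easy direction, where you deduce that members of $\mathsf{G}(t,\frac{n}{t+1})$ are extremal from the forward implication plus the existence of an extremal graph, whereas the paper appeals to Theorem~\ref{universality}; your version is, if anything, tighter.
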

\begin{proof}
Let $G$ be a member graph of $\mathsf{G}(t,\frac{n}{t+1})$. Then $G$ has $n$ vertices and its lambda chromatic 
number is $t$. It follows from Theorem~\ref{universality}, that members of $\mathsf{G}(t,l)$, for some integer $l$, has maximum number of edges among the graphs with lambda chromatic number $t$ and at most $l(t+1)$ vertices. Hence,
for $l=\frac{n}{t+1}$, $G$ has maximum number of edges. 

Conversely, suppose $G$ has maximum number of edges and $c:V(G)\rightarrow\{0,\ldots,t\}$ is an optimal lambda colouring of $G$. If $\nabla_{c}(G)=1$, then $(t+1)$ does not 
divide $n$. A contradiction arises. Now if $\nabla_{c}(G)\geq2$, then we construct $\mathscr{S}_{c}[G]$. From the definition of edge standardised graph, we have $V(\mathscr{S}_{c}[G])=V(G)$. 
By using Proposition~\ref{es_base}, we have the lambda chromatic number of $\mathscr{S}_{c}[G]$ is $t$. 
Since $G$ contains maximum number of edges, again by using Proposition~\ref{es_base} we have $|E(\mathscr{S}_{c}[G])|=|E(G)|$. This means the edge standardised graph $\mathscr{S}_{c}[G]$ contains maximum number of edges. Therefore by using Theorem~\ref{t=3} and Theorem~\ref{t=4sufficint}, we have $\nabla_{c}(G)=\nabla_{c}(\mathscr{S}_{c}[G])=2\textup{ or }3$. 
If $\nabla_{c}(G)=\nabla_{c}(\mathscr{S}_{c}[G])=2\textup{ or }3$, then it follows from Theorem~\ref{t=3} and Theorem~\ref{t=4sufficint} that $n=|V(\mathscr{S}_{c}[G])|=|V(G)|$ is of the form $s(t+1)+r$, where $s\geq0$ and $r\geq1$ are integers. Here $r$ equals $5\mod{4}$, $5\mod{4}$, $6\mod{4}$, $9\mod{4}$, $7\mod{5}$ and $6\mod{5}$ for the respective cases. Hence we conclude $(t+1)$ does not divides $n$. This leads to a contradiction. Therefore  $\nabla_{c}(G)=0$ and consequently $G$ is a member graph of $\mathsf{G}(t,\frac{n}{t+1})$.
\end{proof}

We now define a stationary graph. Such graphs behave in conformity with the necessary conditions, 
developed in Theorem~\ref{nabla_1}, Theorem~\ref{t=3}, Theorem~\ref{t=4necessary} and Theorem~\ref{t+1_divides_n}. 
The intrinsic local restrictions of an optimal lambda colouring are also maintained.

\begin{definition}
An $n-$vertex graph $G$ is said to be a \emph{stationary graph} if the vertex set is partitioned into $t+1$ subsets
$V_{0},\ldots,V_{t}$, where $n\geq t+1\geq4$, and the edge distribution follows the following four properties.
\begin{itemize}
\item $V_{0}$ and $V_{t}$ are non-empty and if for some integer $i$, with $1\leq i\leq t-1$, $V_{i}$ is empty then both $V_{i-1}$ and $V_{i+1}$ are non-empty.
\item $\ed[G]{V_{i},V_{i}}=0$ for each integer $i$, with $0\leq i\leq t$.
\item $\ed[G]{V_{i},V_{i+1}}=0$ for each integer $i$, with $0\leq i\leq t-1$.
\item If $0<|V_{i}|\leq |V_{j}|$, where $i$ and $j$ are integers with $0\leq i,j\leq t$ and $|i-j|\geq2$, then for each $x\in V_{i}$ 
there exists a unique $y\in V_{j}$ such that $\{x,y\}$ is an edge. 
(So $\ed[G]{V_{i},V_{j}}=|V_{i}|=\min\{|V_{i}|,|V_{j}|\}$.) 
\end{itemize}
Also such partition or its dual partition (The \emph{dual partition} of $V_{0},\ldots,V_{t}$ is $\bar{V}_{0},\ldots,\bar{V}_{t}$, where $\bar{V}_{i}:=V_{t-i}$ for each integer $i$ with $0\leq i\leq t$.) satisfies exactly one of following properties.
\begin{enumerate}[\normalfont(a)]
\item $||V_{i}|-|V_{j}||\leq1$ for all integers $i$ and $j$, with $0\leq i,j\leq t$.
\item $|V_{0}|=|V_{1}|+1=|V_{2}|+2=|V_{3}|$, whenever $t=3$.
\item $|V_{0}|+1=|V_{1}|=|V_{2}|+2=|V_{3}|$, whenever $t=3$.
\item $|V_{0}|=|V_{1}|+2=|V_{2}|=|V_{3}|$, whenever $t=3$.
\item $|V_{0}|=|V_{1}|+3=|V_{2}|=|V_{3}|$, whenever $t=3$.
\item $|V_{0}|=|V_{1}|+2=|V_{2}|=|V_{3}|+1=|V_{4}|$, whenever $t=4$.
\item $|V_{0}|=|V_{1}|+2=|V_{2}|=|V_{3}|+2=|V_{4}|$, whenever $t=4$.
\item $|V_{0}|=|V_{1}|=|V_{2}|+2=|V_{3}|=|V_{4}|$, whenever $t=4$.
\end{enumerate}
\end{definition}

\begin{lemma}
Let $G$ be an $n-$vertex stationary graph. Then lambda chromatic number of $G$ is at most $t$. Moreover, for each integer $i$, 
with $0\leq i\leq t$, and for each $v\in V_{i}$, $v\mapsto i$ is a lambda colouring of $G$. 
\end{lemma}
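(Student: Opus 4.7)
The plan is to show that the mapping $c: V(G) \to \{0, 1, \ldots, t\}$ defined by $c(v) = i$ for $v \in V_i$ is a lambda colouring of $G$, mirroring the argument of Theorem~\ref{lambdacolour} for the symmetric case $\mathsf{G}(t,l)$. Since the first bullet of the stationary graph definition guarantees $V_t \neq \emptyset$, this map attains the value $t$, yielding the claimed bound on the lambda chromatic number.

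I would verify the defining inequality $|c(u) - c(v)| + \di[G]{u,v} \geq 3$ by cases on $|c(u) - c(v)|$. When $|c(u) - c(v)| \geq 2$, the inequality is immediate from $\di[G]{u,v} \geq 1$. When $|c(u) - c(v)| = 1$, the pair $(u,v)$ lies in consecutive classes $V_i, V_{i+1}$, and the third bullet $\ed[G]{V_i, V_{i+1}} = 0$ forces non-adjacency, so $\di[G]{u,v} \geq 2$. Both sub-cases are routine and give a total of at least $3$.

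The substantive case is $c(u) = c(v) = i$ with $u \neq v$; here I need $\di[G]{u,v} \geq 3$. The second bullet $\ed[G]{V_i, V_i} = 0$ already rules out adjacency, so only the existence of a common neighbour $w$ remains to be excluded. The second and third bullets together force $w \notin V_{i-1} \cup V_i \cup V_{i+1}$, so $w$ lies in some $V_k$ with $|k - i| \geq 2$ and $V_k \neq \emptyset$. Applying the fourth bullet to the pair $(V_i, V_k)$ then furnishes a matching between $V_i$ and $V_k$ that saturates the smaller class, which rules out the existence of two distinct neighbours $u, v \in V_i$ of the single vertex $w \in V_k$ and delivers the desired contradiction.

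The main obstacle lies in correctly interpreting the fourth bullet in the sub-case $|V_i| < |V_k|$: read literally, it asserts only the existence of a well-defined function sending each $x \in V_i$ to its unique neighbour in $V_k$, without ruling out that two different $x$'s share the same image. The matching structure (equivalently, the injectivity of this function, or the statement that each vertex of $V_k$ has at most one neighbour in $V_i$) has to be extracted from the parenthetical identity $\ed[G]{V_i, V_k} = |V_i| = \min\{|V_i|, |V_k|\}$ together with the role of stationary graphs as the targets of the edge standardisation procedure developed earlier. Once this matching reading is in place, the three cases combine uniformly and the verification closes.
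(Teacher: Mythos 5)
Your proposal is correct and takes essentially the same route as the paper: both verify that $v\mapsto i$ satisfies $|c(u)-c(v)|+\di[G]{u,v}\geq3$ by a case analysis driven by the second, third and fourth bullets of the stationary-graph definition (you split on $|c(u)-c(v)|$ where the paper splits on $\di[G]{u,v}$, but the content is identical). Your closing remark about the fourth bullet is apt — the paper's own proof silently uses the same matching reading when it dismisses a common neighbour of two vertices in one class as "a contradiction" — so your treatment is, if anything, more careful than the original.
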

\begin{proof}
To show the lambda chromatic number of $G$ is at most $t$, it is enough to establish the mapping mentioned in the statement (say) $c$ is a lambda colouring of $G$. 

If $\di[G]{u,v}\geq3$, then $|c(u)-c(v)|+\di[G]{u,v}\geq3$. If for some $u\in V_{i}$ and $v\in V_{j}$, where 
$0\leq i,j\leq t$, suppose $\di[G]{u,v}=1$, then $j\neq i$ since $\ed[G]{V_{i},V_{i}}=0$ for each integer $i$, 
with $0\leq i\leq t$. Also $\ed[G]{V_{i},V_{i+1}}=0$ for each integer $i$, with $0\leq i\leq t-1$. Hence 
$|c(u)-c(v)|=|i-j|\geq2$. Consequently, $|c(u)-c(v)|+\di[G]{u,v}\geq3$ whenever $\di[G]{u,v}=1$ for some $u,v\in V(G)$.
If for some $u,v\in V(G)$, suppose $\di[G]{u,v}=2$, then $u$ and $v$ can not belong to same $V_{i}$, where $0\leq i\leq t$. Otherwise there would exist $w\in V_{j}$, where $j\neq i$, such that $\{u,w\}$ and $\{v,w\}$ are edges. This is a 
contradiction. Hence $c(u)\neq c(v)$, i.e. $|c(u)-c(v)|+\di[G]{u,v}\geq3$ whenever $\di[G]{u,v}=2$ for some $u,v\in V(G)$.
\end{proof}

\begin{lemma}\label{stationary_sufficient}
Let $G$ be an $n-$vertex graph with lambda chromatic number $t\geq3$ and $n\geq t+1$. If $G$ contains maximum number of edges then $G$ is a stationary graph.
\end{lemma}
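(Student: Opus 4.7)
The plan is to use the optimal lambda colouring $c:V(G)\rightarrow\{0,\ldots,t\}$ and its associated coloured partition $C_{0},\ldots,C_{t}$ as the candidate partition $V_{0},\ldots,V_{t}$ for the stationary structure, and then verify the edge-distribution conditions and the size conditions in turn.

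First I would establish the four edge constraints in the stationary definition. Because $c$ is a lambda colouring, two vertices in a common class $C_{i}$, and two vertices in consecutive classes $C_{i},C_{i+1}$, cannot be adjacent (otherwise $|c(u)-c(v)|+\di[G]{u,v}\geq 3$ fails), so $\ed[G]{C_{i},C_{i}}=0$ and $\ed[G]{C_{i},C_{i+1}}=0$. Since $c$ attains both $0$ and $t$, the classes $C_{0}$ and $C_{t}$ are non-empty, and the optimality of $c$ rules out two consecutive empty classes. For the fourth and most delicate bullet I would invoke Theorem~\ref{notation_M}(a), which gives $|E(G)|\leq\M{C_{0},\ldots,C_{t}}$. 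The edge standardised graph $\mathscr{S}_{c}[G]$ has the same vertex partition, lambda chromatic number $t$ by Proposition~\ref{es_base}, and exactly $\M{C_{0},\ldots,C_{t}}$ edges. Since $G$ carries the maximum number of edges among $n$-vertex graphs with lambda chromatic number $t$, necessarily $|E(G)|=|E(\mathscr{S}_{c}[G])|=\M{C_{0},\ldots,C_{t}}$, and then Theorem~\ref{notation_M}(b) forces the fourth bullet of the stationary definition on $G$.

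Next I would handle the size conditions by a case split on $\nabla_{c}(G)$. If $\nabla_{c}(G)\leq 1$ then condition (a) holds immediately. Otherwise $\nabla_{c}(G)\geq 2$; since $\mathscr{S}_{c}[G]$ is edge standardised with the same partition and the same (maximum) edge count as $G$, it satisfies the hypotheses of Lemma~\ref{atmostone}, Proposition~\ref{sum_t}, Proposition~\ref{sum_t+1_gain-0} and Proposition~\ref{sum_t+1_gain-1}. Lemma~\ref{atmostone} yields $|\mathfrak{M}_{c}(G)|+|\mathfrak{m}_{c}(G)|\in\{t,t+1\}$. The $=t$ case is restricted by Proposition~\ref{sum_t} to $t\in\{3,4\}$, while the $=t+1$ case is restricted by Proposition~\ref{sum_t+1_gain-0} together with Proposition~\ref{sum_t+1_gain-1} to $t\in\{3,4\}$. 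Hence for $t\geq 5$ the assumption $\nabla_{c}(G)\geq 2$ is untenable and (a) must hold. For $t=3$ the exact size profile is read off from Theorem~\ref{t=3}, yielding one of (b)--(e), and for $t=4$ it is read off from Theorem~\ref{t=4necessary}, yielding one of (f)--(h); in each case one may need to replace $c$ by its dual, which accounts for the ``or its dual partition'' clause in the stationary definition.

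The main technical obstacle is essentially bookkeeping rather than conceptual: one must confirm that the conclusions of Proposition~\ref{sum_t}, Proposition~\ref{sum_t+1_gain-0}, Proposition~\ref{sum_t+1_gain-1}, Theorem~\ref{t=3} and Theorem~\ref{t=4necessary}, which are stated for edge standardised graphs, transfer faithfully to $G$. Since each of these conclusions concerns only the partition cardinalities $|C_{i}|$, the sets $\mathfrak{M}_{c}(G)$ and $\mathfrak{m}_{c}(G)$, and the prohibited zones $\mathsf{P}_{G}(C_{i})$---quantities that depend solely on the underlying vertex partition---this transfer is automatic: $G$ and $\mathscr{S}_{c}[G]$ carry the same partition data, and the size conditions (a)--(h) of the stationary definition are purely partition-level statements. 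Once this identification is in place, the proof is a straightforward assembly of the results already established.
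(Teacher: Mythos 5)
Your proposal is correct and follows essentially the same route as the paper: take the coloured partition of an optimal lambda colouring, pass to the edge standardised graph $\mathscr{S}_{c}[G]$ (which by Proposition~\ref{es_base} shares the partition, the lambda chromatic number and, under the maximality hypothesis, the edge count), and then split on $\nabla_{c}(G)$, invoking Lemma~\ref{atmostone}, Propositions~\ref{sum_t}, \ref{sum_t+1_gain-0}, \ref{sum_t+1_gain-1} and Theorems~\ref{t=3}, \ref{t=4necessary} for the $\nabla_{c}(G)\geq2$ case. If anything, you are more explicit than the paper in justifying the fourth edge-distribution bullet via Theorem~\ref{notation_M}(b) and in spelling out why $t\geq5$ forces condition (a).
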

\begin{proof}
Let $c:V(G)\rightarrow\{0,\ldots,t\}$ be an optimal lambda colouring of $G$ and $C_{0},\ldots,C_{t}$ be the corresponding coloured partition of $V(G)$. This partition of $V(G)$ satisfies the edge distribution related 
properties in the definition of a stationary graph. By Proposition~\ref{es_base}, $c$ is also an optimal lambda colouring of $\mathscr{S}_{c}[G]$. We note that if $G$ contains maximum number of edges then $|E(G)|=|E(\mathscr{S}_{c}[G])|$. From the definition of edge standardised graph, we have $V(\mathscr{S}_{c}[G])=V(G)$. Moreover, using Proposition~\ref{es_base}, we have lambda chromatic number of $\mathscr{S}_{c}[G]$ is $t$. Hence $\mathscr{S}_{c}[G]$ contains maximum number of edges. Also note that 
$C_{i}=\{u\in V(\mathscr{S}_{c}[G]):c(u)=i\}$ for each integer $i$, with $0\leq i\leq t$.

Now $\nabla_{c}(\mathscr{S}_{c}[G])=\nabla_{c}(G)$. If $\nabla_{c}(G)\geq2$, then using Theorem~\ref{t=3} and Theorem~\ref{t=4necessary}, the (coloured) partition $C_{0},\ldots,C_{t}$ of the vertex set of 
$\mathscr{S}_{c}[G]$ (and hence $G$) follows exactly one of the conditions stated from (b) to (h), in the definition of stationary graph. If $\nabla_{c}(G)\leq1$, then the vertex partition $C_{0},\ldots,C_{t}$ of vertex set follows condition (a) in the definition of stationary graph. 
\end{proof}

Now we are in a position to establish our final classification results. This concludes our article. 

\begin{theorem}\label{classification}
Let $G$ be an $n-$vertex graph with lambda chromatic number $t\geq3$ and $n\geq t+1$. Then $G$ contains maximum number of edges if and only if exactly one of the following holds.
\begin{enumerate}[\normalfont(i)]
\item $G$ is isomorphic to an $n-$vertex member graph $G^{*}$ of $\mathsf{G}(t,\frac{n}{t+1})$, where $n\equiv0\mod{t+1}$.
\item $G$ is isomorphic to an $n-$vertex stationary graph satisfying the property {\normalfont(a)} 
(mentioned in the  definition), such that the value $\K[G]{V_{0},\ldots,V_{t}}$ is minimum over any 
equitable partition into $t+1$ unequally sized parts (subsets) of the vertex sets of all possible $n-$vertex 
graphs, where $n\not\equiv0\mod{t+1}$. 
\item $G$ is isomorphic to exactly one of the $n-$vertex stationary graph satisfying the property {\normalfont(b)} to {\normalfont(g)} (mentioned in the definition), where $n\not\equiv0\mod{t+1}$.
\end{enumerate}
\end{theorem}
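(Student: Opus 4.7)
The plan is to deduce the classification from the structural results already established, by splitting according to whether $(t+1)\mid n$ and according to the value of $\nabla_c(G)$ for an optimal lambda colouring $c$.

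For sufficiency, case~(i) is immediate from Theorem~\ref{t+1_divides_n}. For case~(ii), the partition $V_0,\ldots,V_t$ is equitable with at least one strict inequality, hence $\nabla_c(G)=1$; the edge count formula of Theorem~\ref{nabla_1}, namely $|B|\binom{t}{2}+\binom{|\mathfrak{M}_c(G)|}{2}-\K[G]{V_0,\ldots,V_t}$, has its first two summands forced by $n$ and $t$, so maximality is equivalent to minimality of $\K[G]{V_0,\ldots,V_t}$ over all admissible partitions. For case~(iii) with $t=3$, the stationary properties (b)--(e) are exactly the four types identified in Theorem~\ref{t=3}; for $t=4$, the stationary properties (f) and (g) are exactly the two types of Theorem~\ref{t=4sufficint}. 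Hence every graph listed in (i)--(iii) attains the maximum.

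For necessity, fix an optimal lambda colouring $c$ of $G$. By Lemma~\ref{stationary_sufficient} $G$ is stationary, so its coloured partition $C_0,\ldots,C_t$ satisfies exactly one of (a)--(h). If $\nabla_c(G)=0$ then $(t+1)\mid n$ and Theorem~\ref{t+1_divides_n} gives $G\in\mathsf{G}(t,n/(t+1))$, yielding case~(i). If $\nabla_c(G)=1$ then property~(a) holds non-trivially, $(t+1)\nmid n$, and the edge formula of Theorem~\ref{nabla_1} forces $\K[G]{C_0,\ldots,C_t}$ to be minimal over all $n$-vertex graphs with lambda chromatic number $t$ admitting such a partition, yielding case~(ii). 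If $\nabla_c(G)\geq 2$, Propositions~\ref{sum_t}, \ref{sum_t+1_gain-0} and \ref{sum_t+1_gain-1} restrict $t$ to $\{3,4\}$, and Theorems~\ref{t=3} and~\ref{t=4necessary} then restrict the partition (up to duality) to (b)--(e) when $t=3$ and to (f),(g),(h) when $t=4$. The one remaining obligation is to rule out (h): following the Remark after Theorem~\ref{t=4sufficint}, for such a $G$ and any $X\in\mathfrak{M}_c(\mathscr{S}_c[G])$, the graph $G':=\mathscr{I}_{C_2}\mathscr{D}_X[\mathscr{S}_c[G]]$ is an edge standardised type~(a) graph with $t=4$, $\nabla_c(G')=1$, $|\mathfrak{M}_c(G')|=3$ and $\K[G']{C'_0,\ldots,C'_4}=1$, so Theorem~\ref{nabla_1} produces an $n$-vertex type~(a) competitor with strictly more edges; since $|E(G)|=|E(\mathscr{S}_c[G])|=|E(G')|$ by Proposition~\ref{es_base} and the construction, this contradicts maximality of $|E(G)|$.

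The principal difficulty is organisational rather than technical, because each of the three listed cases is delivered by a single earlier result and the three cases are mutually exclusive by the values of $\nabla_c(G)$ and the divisibility of $n$ by $t+1$; the one genuinely delicate observation is the exclusion of stationary type~(h), which requires the specific edge deletion-plus-insertion $\mathscr{I}_{C_2}\mathscr{D}_X$ of the Remark to convert a candidate type~(h) graph into a strictly suboptimal type~(a) graph with the same edge count.
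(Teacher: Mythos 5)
Your proposal is correct and follows essentially the same route as the paper: sufficiency of (i)--(iii) via Theorem~\ref{t+1_divides_n}, Theorem~\ref{nabla_1}, Theorem~\ref{t=3} and Theorem~\ref{t=4sufficint} (passing through the edge standardised graph $\mathscr{S}_{c}[G]$, which has the same edge count), and necessity via Lemma~\ref{stationary_sufficient} together with the exclusion of stationary type~(h) by the remark following Theorem~\ref{t=4sufficint}. The only cosmetic difference is that you make the case split on $\nabla_{c}(G)$ and the appeal to Propositions~\ref{sum_t}, \ref{sum_t+1_gain-0} and \ref{sum_t+1_gain-1} explicit, whereas the paper leaves that bookkeeping inside the cited theorems.
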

\begin{proof}
If $n\equiv0\mod{t+1}$, then from Theorem~\ref{t+1_divides_n}, $G$ has maximum number of edges if and only if (i) holds.

Suppose $n\not\equiv0\mod{t+1}$. Let (ii) hold. Then $V_{0},\ldots,V_{t}$ is a partition of $V(G)$ and there exist integers 
$i,j$, with $0\leq i,j\leq t$, such that $|V_{i}|=1+|V_{j}|$. Since the graph $G$ is a stationary graph, the colouring
$c:V(G)\rightarrow\{0,\ldots,t\}$ defined by $c(v)=i$, where $v\in V_{i}$, $0\leq i\leq t$, is an optimal lambda colouring of $G$. Clearly, $V_{0},\ldots,V_{t}$ is the underlying coloured partition of the edge standardised graph $\mathscr{S}_{c}[G]$. Now $\ed[G]{V_{i}, V_{j}}=\ed[{\mathscr{S}_{c}[G]}]{V_{i},V_{j}}$, $0\leq i,j\leq t$. Hence $|E(G)|=|E(\mathscr{S}_{c}[G])|$. Therefore using Theorem~\ref{nabla_1}, $\mathscr{S}_{c}[G]$ and hence $G$ has maximum number of edges.

Let (iii) hold. Then using similar argument as above, Theorem~\ref{t=3} and Theorem~\ref{t=4sufficint}, $\mathscr{S}_{c}[G]$ and hence $G$ has maximum number of edges.

Conversely, suppose $G$ has maximum number of edges. Then by Lemma~\ref{stationary_sufficient}, $G$ is a stationary graph. Therefore $G$ satisfies exactly one condition from (a) to (h) stated in the definition of stationary graph. But condition (h) is further excluded by the aforementioned remark. Further $n\not\equiv0\mod{t+1}$ excludes the fact (i) of this hypothesis.
Hence facts of (ii) and (iii) of hypothesis follow.
\end{proof} 

The following two results are natural consequences of the above classification theorem.

\begin{corollary}\label{classification-large}
Let $G$ be an $n-$vertex graph with lambda chromatic number $t\geq5$ and $n\geq t+1$. If $G$ contains maximum number of edges, then  $G$ admits an optimal equitable partition.
\end{corollary}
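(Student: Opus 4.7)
The plan is to read off the conclusion directly from Theorem~\ref{classification} together with the parity restrictions built into the definition of stationary graph. Since $G$ has the maximum number of edges, exactly one of the three alternatives (i), (ii), (iii) of Theorem~\ref{classification} holds. My task reduces to showing that in each alternative available for $t\geq5$, the graph $G$ admits an optimal lambda colouring whose coloured partition is equitable.

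First I would dispatch alternative (iii). The six stationary partition types (b)--(g) listed there are explicitly tagged with the side conditions ``whenever $t=3$'' or ``whenever $t=4$''. Consequently, as soon as $t\geq5$, none of (b)--(g) can be realised and alternative (iii) is vacuous. Thus only (i) and (ii) remain.

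In alternative (i), $n\equiv0\pmod{t+1}$ and $G$ is isomorphic to a member of $\mathsf{G}\bigl(t,\tfrac{n}{t+1}\bigr)$. By Construction~\ref{G(t,l)} the associated vertex partition $V_0,\ldots,V_t$ has $|V_i|=\tfrac{n}{t+1}$ for every $i$, which is trivially equitable; by Theorem~\ref{lambdacolour} the map sending each vertex of $V_m$ to $m$ is an optimal lambda colouring whose coloured partition is exactly this equitable one. In alternative (ii), $G$ is a stationary graph satisfying property (a), i.e.\ $\bigl||V_i|-|V_j|\bigr|\leq1$ for all $0\leq i,j\leq t$; the lemma preceding Lemma~\ref{stationary_sufficient} shows that $v\mapsto i$ for $v\in V_i$ is a lambda colouring, and since the lambda chromatic number is $t$ this colouring is optimal, its coloured partition being the equitable partition $V_0,\ldots,V_t$.

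Combining the two remaining cases, $G$ always admits an optimal lambda colouring whose coloured partition is equitable, which is the desired conclusion. There is essentially no obstacle here: the entire force of the corollary lies in the fact, already encoded in Theorem~\ref{classification}, that the ``irregular'' stationary patterns exist only for $t\in\{3,4\}$. Thus the proof is little more than citing Theorem~\ref{classification} and observing that clause (iii) is empty for $t\geq5$.
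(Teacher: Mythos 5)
Your proposal is correct and is essentially the argument the paper intends (the paper states the corollary as a ``natural consequence'' of Theorem~\ref{classification} without writing out a proof): for $t\geq5$ the stationary types (b)--(h) are unavailable, so only alternatives (i) and (ii) survive, and in both the coloured partition of the exhibited optimal lambda colouring is equitable. No gaps.
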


The converse of the above corollary is not true. However, if an $n-$vertex stationary graph $G$ with equitable partition $V_{0},\ldots,V_{t}$ satisfies the property that the value $\K[G]{V_{0},\ldots,V_{t}}$ is minimum over any equitable partition into $t+1$ parts (subsets) of the vertex sets of all possible $n-$vertex graphs, then $G$ contains maximum number of edges.  

\begin{corollary}
Let $G$ be an $n-$vertex graph with lambda chromatic number $t\geq3$ and $n\geq t+1$. If $G$ contains maximum number of edges then there exist a member graph $G^{*}$ of $\mathsf{G}(t,\lfloor\frac{n}{t+1}\rfloor)$ and  a member graph $G^{**}$ of 
$\mathsf{G}(t,\lfloor\frac{n}{t+1}\rfloor+3)$ such that $G^{*}$ is subgraph of $G$ and $G$ is subgraph of $G^{**}$.
\end{corollary}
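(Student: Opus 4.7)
Set $l=\lfloor n/(t+1)\rfloor$. The plan is to invoke Theorem~\ref{classification}, which forces $G$ to be one of the three structured types (i)--(iii), and then produce the required $G^{*}$ and $G^{**}$ separately in each case.

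For the outer containment $G\subseteq G^{**}$, I would apply Theorem~\ref{universality} to embed $G$ into some member $H\in\mathsf{G}(t,l_{0})$, where $l_{0}=\max_{u\in V(G)}|[c(u)]|$ is the maximum size of a colour class under an optimal lambda colouring of $G$. A direct inspection of the class-size lists provided by Theorem~\ref{classification} shows that in every case (i)--(iii) every class has size at most $l+1$, so $l_{0}\leq l+1\leq l+3$. It then remains to observe that any member of $\mathsf{G}(t,l_{0})$ is a subgraph of some member of $\mathsf{G}(t,l+3)$: one adjoins $l+3-l_{0}$ fresh vertices $u^{i}_{k}$ to each class $V_{i}$ together with the canonical matching edges $\{u^{i}_{k},u^{j}_{k}\}$ whenever $|i-j|\geq 2$, without disturbing the pre-existing graph. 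Composing the two embeddings gives the desired $G\subseteq G^{**}\in\mathsf{G}(t,l+3)$.

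For the inner containment $G^{*}\subseteq G$, case (i) of Theorem~\ref{classification} is immediate: set $G^{*}=G$. In case (ii), where the partition is equitable with $|V_{i}|\in\{l,l+1\}$, I would extract $G^{*}$ by choosing $l$ vertices from every class compatibly with the matchings between non-adjacent class pairs. The stationary structure guarantees that each pair $(V_{i},V_{j})$ with $|i-j|\geq 2$ carries a matching in $G$ of size $\min\{|V_{i}|,|V_{j}|\}\geq l$, and a coordinated choice of $l$-subsets $W_{i}\subseteq V_{i}$ should allow all these matchings to simultaneously restrict to perfect matchings on $\bigsqcup W_{i}$. I would implement this as an iterative \emph{spine-extension} construction: start with the $\mathbb{G}_{t}$-spine guaranteed inside the edge-standardised graph $\mathscr{S}_{c}[G]$ (which shares the same colour partition and edge count as $G$ by Proposition~\ref{es_base}) and then grow the spine layer by layer through $l$ rounds, using at each step the matching structure of the stationary graph. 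Case (iii) proceeds analogously: the explicit class-size descriptions in Theorem~\ref{t=3} and Theorem~\ref{t=4necessary} let one read off an explicit $\mathsf{G}(t,l)$-skeleton inside the large classes and couple it to the matchings involving the small class.

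The main obstacle I anticipate is the combinatorial compatibility step in the lower-bound construction: every pair of non-adjacent classes individually carries a matching of the required size, but these matchings are defined independently across pairs, and ensuring that a common $l$-subset of each class is mutually matched across all admissible pairs is non-trivial. I expect the cleanest resolution to go through the iterative spine-extension sketched above, where at each stage a Hall-type argument applied to the bipartite graph between the surviving vertices of two classes supplies the next layer; the canonical indexing of $\mathscr{S}_{c}[G]$ then provides a natural bookkeeping framework, and the translation back to $G$ itself is carried out by a class-wise matching realignment that exploits $\nabla_{c}(G)\leq 3$ established in the proof of Theorem~\ref{classification}.
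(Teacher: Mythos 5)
Your outer containment is sound: in every case admitted by Theorem~\ref{classification} the largest colour class has size at most $\lfloor n/(t+1)\rfloor+1$, so Theorem~\ref{universality} followed by the padding you describe does produce $G\subseteq G^{**}\in\mathsf{G}(t,\lfloor n/(t+1)\rfloor+3)$. The paper itself offers no argument for this corollary (it is announced only as a ``natural consequence'' of the classification theorem), so the real question is whether your inner containment can be completed --- and it cannot. The ``combinatorial compatibility step'' you flag at the end is not a technical nuisance to be dispatched by a Hall-type or spine-extension argument; it is a genuine obstruction. The matchings between distinct far pairs of classes in a maximum-edge graph are completely uncorrelated, and no ``realignment'' transfers a subgraph of $\mathscr{S}_{c}[G]$ back into $G$: the edge standardised graph is a different graph on the same vertex set with the same number of edges, not a subgraph of $G$.

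Concretely, take $t=4$, $n=8$, classes $V_{0}=\{a_{1},a_{2}\}$, $V_{1}=\{b\}$, $V_{2}=\{c_{1},c_{2}\}$, $V_{3}=\{d\}$, $V_{4}=\{e_{1},e_{2}\}$, and edges $a_{1}c_{1}$, $a_{2}c_{2}$, $a_{1}d$, $a_{1}e_{1}$, $a_{2}e_{2}$, $bd$, $be_{2}$, $c_{1}e_{1}$, $c_{2}e_{2}$. This is a stationary graph of type (a) with $\K[G]{V_{0},\ldots,V_{4}}=0$, so by Theorem~\ref{classification} it attains the maximum number of edges ($9$) among $8$-vertex graphs with lambda chromatic number $4$; here $\lfloor n/(t+1)\rfloor=1$ and the unique member of $\mathsf{G}(4,1)$ is $\mathbb{G}_{4}$. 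But $\mathbb{G}_{4}$ contains two \emph{adjacent} vertices of degree $3$ (namely $v_{0}$ and $v_{4}$), whereas the only degree-$3$ vertices of the graph above are $a_{1}$ and $e_{2}$, which are not adjacent; hence $\mathbb{G}_{4}$ is not a subgraph. A second, independent failure occurs in case (e) of your own case analysis ($t=3$, class sizes $k$, $k-3$, $k$, $k$): there $\lfloor n/(t+1)\rfloor=k-1$, every member of $\mathsf{G}(3,k-1)$ has $2k-2$ vertices of degree $2$, but the host graph has only $2k-3$ vertices of degree at least $2$, even after edge standardisation. So the inner containment --- and with it the corollary as literally stated --- fails; a correct treatment must either weaken the inner object (for instance to $\mathsf{G}(t,\lfloor n/(t+1)\rfloor-2)$ together with an explicit alignment construction) or restrict the claim to suitably aligned representatives such as $\mathscr{S}_{c}[G]$ in the equitable case.
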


The above corollary connotes an approximation result. Roughly, an $n-$vertex graph with lambda chromatic number
$t\geq3$, where $n\geq t+1$, and having maximum number of edges can be approximated by an ``inner'' graph $G^{*}$ 
and an ``outer'' graph $G^{**}$.

\begin{acknowledgement}
The research work of first author is supported by the post doctoral fellowship scheme (File Reference Number: 2/40(33)/2015/R\&D-II/11174 dated August~17, 2015) of National Board of Higher Mathematics, Department of Atomic Energy, Government of India.
\end{acknowledgement}

\end{document}